\def\a{\alpha}
\def\b{\beta}
\def\om{\omega}
\def\s{\sigma}
\def\wt{\widetilde}
\def\r{\eqref}
\def\Res{\mathop{\rm Res}}
\def\Im{\mathop{\rm Im}}
\def\Re{\text{\upshape Re\,}}
\def\Im{\text{\upshape Im\,}}
\def\rt{\wt R}
\def\cpm{\mathbb{C}_\pm}
\def\cp{\mathbb{C}_+}
\def\cm{\mathbb{C}_-}
\def\ra{\rightarrow}
\def\pt{\widetilde{\phi}}
\def\ps{\phi^*}
\def\gt {\wt g}
\def\et {\wt E}
\def\and{\mbox{ and } }
\def\xt{\wt \xi}
\def\xs{\xi^*}
\def\ent{\wt E_n(z) }
\def\xt{\wt \xi}
\def\xs{\xi^*}
\def\rt{\wt R}
\newtheorem{theorem}{Theorem}[section]
\newtheorem{defi}[theorem]{Definition}
\newtheorem{lemma}[theorem]{Lemma}
\newtheorem{prop}[theorem]{Proposition}
\def\Ai{\text{\upshape Ai}}
\def\Bi{\text{\upshape Bi}}
\def\pt{\widetilde\phi}
\def\ft{\widetilde f}
\def\fs{ f^* }
\numberwithin{equation}{section}
\def\hh{\notag \\ &=}
\def\hhh{\notag \\ &\quad}
\def\bb #1\ee {\begin{align}#1\end{align}}
\def\pb#1\pe{\begin{pmatrix}#1\end{pmatrix}}
\def\(#1\){\left(#1\right)}
\begin{document}

\title{Global Asymptotics of the Hahn Polynomials}
\author{Y. Lin$^{\,a,b}$ and R. Wong$^{\,b}$
\\
\hspace{.7cm}
 \hbox{\small \emph{ $^a$ Department of Mathematics, Zhongshan University, Guangzhou,
China}}
\\
 \hbox{\small
\emph{$^b$ Department of Mathematics, City University of Hong Kong, Tat Chee Avenue, Kowloon,
Hong Kong}
}
}
\date{}
\maketitle

\begin{abstract}
In this paper, we study the asymptotics of  the Hahn polynomials $Q_n(x;\a,\b,N)$
as the degree $n$ grows to infinity, when the parameters $\a$ and $\b$ are fixed and the ratio of $n/N=c$ is a constant in the interval $(0,1)$.
Uniform asymptotic formulas in terms of Airy
functions and elementary functions are obtained for $z$ in three overlapping regions,  which together cover the whole complex plane.
Our method is based on
a modified version of the  Riemann-Hilbert approach introduced by Deift and Zhou.
\\
\\
\textbf{Keywords:} Global asymptotics; Hahn polynomials; Riemann-Hilbert problems; Airy function.
\\
\textbf{Mathematics Subject Classification 2010}: 41A60, 33C45
\end{abstract}

\section{Introduction}
For $\a, \beta>-1$, the Hahn polynomials are  explicitly given by
\begin{align}
  Q_n(x;\a,\b,N)&:={}_3F_2(-n,-x,n+\a+\b+1;-N,\a+1;1)
 \hh
     \sum_{k=0}^n \frac{  (-n)_k (-x)_k (n+\a+\b+1)_k  }{ (-N)_k (\a+1)_k \,k! };
 \label{Q}
\end{align}
see \cite[p.174]{BealsWong} and \cite{KarlinMcgregor}.
%For convenience, we put $Q_n(x)=Q_n(x;\a,\b,N)$.
These polynomials
are orthogonal on the discrete set $\{ 0,1,\cdots,N \}$
with respect to the weight function
\begin{equation}
   \rho(x;\a,\b,N)=
  \frac{ (\a+1)_x }{ x! } \frac{ (\b+1)_{N-x} }{ (N-x)! },
\qquad
x=0,1,2,\cdots,N.
\label{rho}
\end{equation}
More precisely, we have
\begin{equation}
    \sum_{k=0}^{N}Q_n(k;\a,\b,N)Q_m(k;\a,\b,N)\rho(k;\a,\b,N)
=
  h_{N,n}  \delta_{n,m}
,
\qquad
    n,m=0,1,\cdots,N
,
\label{Q_ort}
\end{equation}
where
\begin{align}
  h_{N,n}=
  \frac {  (-1)^n (n+\a+\b+1)_{N+1} (\b+1)_n n!  }{ (2n+\a+\b+1)(\a+1)_n(-N)_n N! }
 ;
\end{align}
see \cite[p.\,462]{Handbook}.

Formula \eqref{Q_ort}  tells us that these polynomials are orthogonal on an unbounded interval as $n \rightarrow \infty$.
We now introduce a rescaling so that these polynomials become orthogonal on a bounded interval.
Let $X_N$ denote the set defined by
\begin{align}
    X_N:=\{x_{N,k} \}_{k=0}^{N-1}, \qquad \text{where\ \ } x_{N,k}:=\frac{k+1/2}{N}.
 \label{nodes}
\end{align}
The $x_{N,k}$'s are called \emph{nodes} and they all lie in the interval $(0,1)$. Also, let
\begin{equation}
  P_{N,n}(z):=Q_n(Nz-1/2;\a,\b,N-1)
\label{P_def}
\end{equation}
and
\begin{equation}
  w(z):=N^{-\a-\b}\a! \b!  \rho(Nz-1/2;\a,\b,N-1).
 \label{w_def}
\end{equation}
It is readily seen that the polynomials $P_{N,n}(z)$ are orthogonal on the nodes $x_{N,k}$ with respect to the weight $w_{N,k}:=w( x_{N,k} )$; that is,
\begin{equation}
    \sum_{k=0}^{N-1}P_{N,n}( x_{N,k} )P_{N,m}( x_{N,k} )  w_{N,k}
=
    h^*_{N,n}
    \delta_{n,m}
,
\label{P_ort}
\end{equation}
where  $h^*_{N,n}=N^{-\a-\b}\a!\b!h_{ N-1,n }$.

For further properties and applications of the Hahn polynomials, we refer to Beals and Wong \cite{BealsWong} and Karlin and Mcgregor \cite{KarlinMcgregor}.
%The concern of this paper is the asymptotics of the Hahn polynomials.
In 1989, Sharapudinov \cite{Shar} studied the asymptotic behavior of $Q_n(x;\a,\b,N)$,
when the degree $n$ is substantially smaller than $N$. His main result is an asymptotic
formula for $Q_n(x;\a,\b,N)$ with $n=O(N^{1/2})$ as $n\rightarrow\infty$; which involves the Jacobi polynomial $P^{ (\b,\a) }_n(t)$,  where $t$ is related to $x$ via the formula $x=(N-1)(1+t)/2$.
Recently,
Baik et al. \cite{BaikBook} used the Riemann-Hilbert approach to investigate the asymptotics of
discrete orthogonal polynomials with respect to a general weight function.
Their results are very general, but it is difficult
to use them to write out explicit formulas for specific polynomials.
With regard to the Hahn polynomials, they only considered the case of varying parameters; that is,
$\a=NA$ and $\b=NB$, where $A$ and $B$ are fixed positive numbers.
Moreover, their results are more local in nature; that is, one needs more asymptotic formulas to describe the behavior of these polynomials in the complex plane.

The purpose of this paper is to study uniform asymptotic behavior of the Hahn
polynomials as $n\rightarrow \infty$, when the parameters $\a$ and $\b$ are fixed and the ratio $n/N$ is a constant $c\in(0,1)$. Our approach is based on a modified version of the Riemann-Hilbert method introduced by Deift and Zhou \cite{Deift}. This version has been
used successfully to obtain globally uniform asymptotic expansions of several
discrete orthogonal polynomials; see \cite{DaiWong}, \cite{OuWong} and
\cite{WangWong}.
The presentation of this paper is arranged as follows: In Section 2, we review some preliminaries, including weak asymptotics of the zero distribution and the formulation of the basic interpolation problem $Y(z)$. In Section 3,
we  transform this interpolation problem  into
an equivalent Riemann-Hilbert problem (RHP).
In Section 4, we give the matrix transformation $T(z)$ that normalizes the
RHP for $R(z)$ presented in Section 3 by using a function $\gt(z)$, which is related to the logarithmic potential (g-function) of the equilibrium measure.
The auxiliary functions
$\pt(z)$ and $\ps(z)$ are also studied in Section 4.  In Section 5, we factorize the
jump matrix in the RHP for $T(z)$.
The final results are presented in Sections 6 and 7, where we divide the whole complex plane into three regions and
in each region we derive a uniformly asymptotic formula. As a special case of our result, we give in Section 8 the asymptotic formula of $Q_n(x;\a,\b,N)$ for fixed values of $x$.

%Before we set out to solve our problem, we first investigate
% the equilibrium measure corresponding to  the Hahn polynomials.

\section{Preliminaries}
\subsection{The equilibrium measure}

It is known that the zero distribution of discrete orthogonal polynomials is related to a constrained equilibrium problem for logarithmic potential with an external field $\varphi(x)$,
which is defined by the formula
\begin{align}
	\varphi(x):=V(x) + \int_{a_1}^{a_2} \ln|x-y| \rho^0(y)dy,
	\qquad x\in(a_1,a_2),
\end{align}
where $V(x)$ and $\rho^0(x)$ are real-analytic functions in a neighborhood of the closed interval $[a_1,a_2]$, and $\rho^0(x)$ is also a density function of the nodes; see \cite{DaiWong} and \cite[p.26]{BaikBook}.
In our case, $V(x)=1-x\log x -(1-x)\log(1-x)$, $\rho^0(x)=1$ and $[a_1, a_2]=[0,1]$.
To see how the function $V(x)$ is derived, we recall the nodes $x_{N,n}$ and the weights
$w_{N,n}=w(x_{N,n})$ given in \eqref{nodes} and \eqref{P_ort}, respectively, and write
\begin{align}
	w_{N,n}
 &= e^{ -N V_N( x_{N,n} ) } \prod^{N-1}_{ m=0 \atop m\neq n }
 	| x_{N,n}-x_{N,m} |^{-1}.
  \label{wNn}
\end{align}
From \eqref{wNn}, an explicit formula can be given for $V_N(x_{N,n})$. Indeed, since
$x_{N,n}=(n+\frac12)/N$ and
\begin{align*}
	\prod^{N-1}_{ m=0 \atop m\neq n }
 	| x_{N,n}-x_{N,m} |^{-1}
 =	\frac{ N^{N-1} }{ n! (N-n-1)! },
\end{align*}
we have
\begin{align*}
	V_N(x_{N,n})= -\frac1N \log w(x_{N,n})+
		\frac1N \log \frac{ N^{N-1} }{ n! (N-n-1)! }.
\end{align*}
Since $n=N x_{N,n}-\frac12$, by replacing $x_{N,n}$ and $n$ by $x$ and $Nx-\frac12$, respectively, in the above equation, we obtain
\begin{align}
	V_N(x)=-\frac1N \log w(x)+
		\frac1N \log \frac{ N^{N-1} }{ (Nx-\frac12)! (N-Nx-\frac12)! }
 \label{Vx}
\end{align}
for $x\in(0,1)$. Using Stirling's formula, it can be shown that
\begin{align*}
	\frac1N \log \frac{ N^{N-1} }{ (Nx-\frac12)! (N-Nx-\frac12)! }
 =	1-x\log x -(1-x) \log(1-x)+O\(\frac1N\)
\end{align*}
as $N\rightarrow\infty$. From \eqref{w_def}, we also have
\begin{align*}
	 w(x)&=
	  N^{-\a-\b}
 \frac{ \Gamma(Nx+\a+\frac12)}{  \Gamma(Nx+\frac12)}
 \cdot
 \frac{\Gamma( N(1-x)+\b+\frac12  ) }{\Gamma(N(1-x)+\frac12)    }
\\
 &\sim
	x^{\a}(1-x)^{\b}
	\qquad\qquad \mbox{as } N\rightarrow\infty.
\end{align*}
Thus, it follows from \eqref{Vx} that
\begin{align}
	V_N(x) =
	1-x\log x -(1-x)\log(1-x)+O\(\frac1N\)
 \label{V}
\end{align}
as $N\rightarrow \infty$. This approximation formula suggests the definition of the function
\begin{align}
	V(x)=1-x\log x -(1-x)\log(1-x),
\end{align}
and we have
\begin{align}
	V_N(x)=V(x)+O\(\frac1N\).
\end{align}
Observe that in our case, we also have $\varphi(x)=0$.

The measure $\mu(x)dx$ that minimizes the following energy functional
\begin{align}
	E_c[\mu]=-c \int_0^1 \int_0^1 \ln |x-y| \mu(x) dx \mu(y) dy +
	\int_0^1 \varphi(x) \mu(x) dx,
\end{align}
where $\mu(x)$ satisfies the upper and lower constraints
\begin{align}
	0\leq \mu(x) \leq 1/c
 \label{mu_con}
\end{align}
and the normalization condition
\begin{align}
	\int_0^1 \mu(x) dx =1,
 \label{mu_nor}
\end{align}
is called the {\it equilibrium measure}.
It follows from (\ref{mu_con}) that there is an upper bound for $\mu(x)$, which does not appear in the continuous case. This fact is a crucial difference between
discrete orthogonal polynomials and continuous orthogonal polynomials.
The equilibrium measure $\mu(x)dx$ divides the
interval of orthogonality into subintervals of three possible types:
(1) achieving the lower constraint; (2) attaining the upper constraint;
(3) not reaching the upper or the lower constraints. We call the open intervals of type (1)-(3), Voids, Saturated Regions and Bands, respectively. For reference, we refer to
\cite[p.19]{BaikBook}.

In the existing literature, the equilibrium measure is usually obtained by solving a minimization problem of a certain quadratic
functional (cf. \cite{BaikBook}, \cite{DaiWong}). Here, we prefer to use the method introduced by Kuijlaars and Van Assche \cite{Arno}.

Following \cite{Arno}, one can construct the
equilibrium measure corresponding to the Hahn polynomials. Recall $c:=n/N$, and let
\begin{equation}
    a:=\frac12 -\frac12 \sqrt{1-c^2},
\qquad \qquad
    b:=\frac12+\frac12 \sqrt{1-c^2}.
\label{MRS}
\end{equation}
The density function is given by
\begin{equation}
  \mu(x)=\left\{\begin{array}{ll}
  \dfrac{2}{\pi c}  \arcsin( \dfrac{c}{2\sqrt{ x-x^2 } } ), &  x\in(a,b),\\
\\
  \dfrac1c,  & x\in[0,a] \cup [b,1].
  \end{array}\right.
\label{mu}
\end{equation}
Observe that the equilibrium measure of the Hahn polynomials corresponds to the saturated-band-saturated configuration defined in \cite{BaikBook}, and we also note that
\begin{align}
	a+b=1,
	\qquad \qquad
	a\cdot b=c^2/4.
 \label{ab}
\end{align}

\subsection{The basic interpolation problem}

%As that in \cite{DaiWong},
We begin with the  basic interpolation problem
 for the Hahn polynomials.
Note that the leading coefficient of $P_{N,n}(z)$ is
\begin{equation}
k_{N,n}:=\frac{ N^n(n+\a+\b+1)_n }{ (\a+1)_n (-N+1)_n }.
\end{equation}
Hence,
the monic Hahn polynomials $\pi_ { N,n}(z)$ are given by
\begin{equation}
  \pi_{N,n}(z)=\frac {1}{k_{N,n}} P_{N,n}(z)
.
\label{pi_def}
\end{equation}
Clearly, they satisfy the new orthogonality relation
\begin{equation}
     \sum_{k=0}^{N-1}\pi_{N,n}(   x_{N,k}  )\pi_{N,m}(x_{N,k} )w_{N,k}
=
    \delta_{n,m}/\gamma_{N,n}^2
,
\label{pi_orthogonality}
\end{equation}
where
$
    \gamma_{N,n}^2
    =
    k_{N,n}^2/h^*_{N,n}
$.

Following \cite{BaikBook}, we construct the interpolation problem
 for a $2\times2$ matrix-value function $Y(z)$ with the properties:
\begin{enumerate}
\item[($\,Y_a$)] $Y(z)$ is analytic for $z\in \mathbb{C}\setminus X_N$;
\item[($\,Y_b$)] at each $x_{N,k} \in X_N $, the second column of $Y$ has a simple pole where the residue is
\begin{equation}
    \Res\limits_{z=x_{N,k}} Y(z)=\lim_{z\rightarrow x_{N,k} } Y(z)
    \begin{pmatrix} 0 & w_{N,k}  \\ 0 & 0 \end{pmatrix};
 \label{Y_res}
\end{equation}
%where $w(z)$ is the weight function given in (\ref{w_def});
    \item[($\,Y_c$)] as $z\rightarrow \infty$,
$$
    Y(z)
=
    \left(  I + O\left(\frac1z \right)  \right)
    \begin{pmatrix}
    z^n  & 0\\
    0 & z^{-n}
    \end{pmatrix}.
$$
\end{enumerate}
By the well-known theorem of Fokas, Its and Kitaev \cite{FokasIts}
(see also Baik et al. \cite{BaikBook}),
we have
\begin{theorem}
The unique solution to the above interpolation problem is given by
\begin{equation}
    Y(z)
:=
    \begin{pmatrix}
    \pi_{N,n}(z)  &  \sum\limits_{k=0}^{N-1}\dfrac{ \pi_{N,n}(x_{  N,k	})w_{N,k} }{z-x_{  N,k	}}\\
    \\
    \gamma_{N,n-1}^2\pi_{N,n-1}(z)  &  \sum\limits_{k=0}^{N-1}
    \dfrac{\gamma_{N,n-1}^2\pi_{N,n-1}(x_{  N,k	})w_{N,k} }
    { z-x_{  N,k	} }
    \end{pmatrix}
.
\label{Y}
\end{equation}
\end{theorem}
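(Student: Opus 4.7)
The plan is to prove existence and uniqueness separately, with both parts driven by the discrete orthogonality relation \eqref{pi_orthogonality}.

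For existence, I would simply verify that the candidate $Y(z)$ in \eqref{Y} satisfies $(Y_a)$--$(Y_c)$. Analyticity on $\mathbb{C}\setminus X_N$ is immediate since the first column consists of polynomials and the second column consists of Cauchy-type sums with simple poles exactly on $X_N$. For condition $(Y_b)$, a direct residue calculation at $z=x_{N,k}$ gives $\pi_{N,n}(x_{N,k})w_{N,k}$ in the $(1,2)$ slot and $\gamma_{N,n-1}^2\pi_{N,n-1}(x_{N,k})w_{N,k}$ in the $(2,2)$ slot, which is exactly what the product $Y(z)\begin{pmatrix}0 & w_{N,k}\\ 0 & 0\end{pmatrix}$ produces in the limit since only the first column of $Y$ (which is analytic at $x_{N,k}$) enters. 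For $(Y_c)$, expand each Cauchy sum as a geometric series in $1/z$:
\begin{equation*}
\sum_{k=0}^{N-1}\frac{\pi_{N,n}(x_{N,k})w_{N,k}}{z-x_{N,k}}
=\sum_{j=0}^{\infty}z^{-j-1}\sum_{k=0}^{N-1}\pi_{N,n}(x_{N,k})x_{N,k}^{j}\,w_{N,k},
\end{equation*}
and similarly for the $(2,2)$ entry. The orthogonality relation \eqref{pi_orthogonality} annihilates the first $n$ inner sums for the top row and the first $n-1$ for the bottom row, while the $z^{-n}$ coefficient in the $(2,2)$ entry equals $\gamma_{N,n-1}^{2}\sum_{k}\pi_{N,n-1}(x_{N,k})^{2}w_{N,k}=1$. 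Together with the monic normalizations of $\pi_{N,n}$ and $\pi_{N,n-1}$, this delivers precisely the asymptotic form in $(Y_c)$.

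For uniqueness, let $Y$ be any solution. Because the right-hand side of \eqref{Y_res} has zero first column, the first column of $Y$ has no poles on $X_N$ and is therefore entire. Combined with the polynomial growth dictated by $(Y_c)$, Liouville's theorem forces $Y_{11}$ to be a monic polynomial of degree $n$ and $Y_{21}$ to be a polynomial of degree at most $n-1$. Next, the function
\begin{equation*}
Y^{(2)}(z)-\sum_{k=0}^{N-1}\frac{w_{N,k}\,Y^{(1)}(x_{N,k})}{z-x_{N,k}}
\end{equation*}
is entire (the residues cancel by $(Y_b)$) and tends to zero at infinity by $(Y_c)$, so it is identically zero; thus the second column is completely determined by the first.

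Finally, substituting this representation back into the asymptotic condition $(Y_c)$ gives discrete moment equations: the $(1,2)$ entry being $O(z^{-n-1})$ forces $\sum_{k}Y_{11}(x_{N,k})x_{N,k}^{j}w_{N,k}=0$ for $j=0,\dots,n-1$, so the monic polynomial $Y_{11}$ must coincide with $\pi_{N,n}$; the analogous expansion of the $(2,2)$ entry fixes $Y_{21}$ up to scaling as $\pi_{N,n-1}$, and matching the $z^{-n}$ coefficient to $1$ pins the constant to $\gamma_{N,n-1}^{2}$ by \eqref{pi_orthogonality}. I do not expect a genuine obstacle here; the only step requiring mild care is tracking the precise normalization constant $\gamma_{N,n-1}^{2}$ in the $(2,1)$ entry, which drops out of the $z^{-n}$ coefficient-matching described above.
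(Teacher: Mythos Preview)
Your proof is correct and is the standard Fokas--Its--Kitaev argument. Note, however, that the paper does not actually prove this theorem: it merely cites \cite{FokasIts} and \cite{BaikBook} and states the result. So there is nothing to compare against beyond observing that what you have written is precisely the classical proof those references contain.
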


\section{Riemann-Hilbert Problem}

%As we mentioned before,  the equilibrium measure  realizes its upper constraint near the endpoints of the interval orthogonality; see (\ref{mu}).
%Now we need to modify the Deift-Zhou method to handle this case.
In this section,
we will  make  a sequence of transformations:
%related to the matrix $Y(z)$,
$
	Y\rightarrow H\rightarrow R,
$
and transform the basic interpolation problem into
an equivalent Riemann-Hilbert problem (RHP).

%\subsection{The transformation $Y\rightarrow H$}

%As we mentioned before,  the equilibrium measure  realizes its upper constraint near the endpoints of the interval orthogonality; see (\ref{mu}).
First,  we want to
remove the saturated regions.
Following Baik et al. \cite{BaikBook},
we  introduce the first transformation
\begin{align}
    H(z)
&:=
     Y(z)
        \begin{pmatrix}
         \prod\limits_{j=0}^{N-1}(z-x_{N,j})^{-1} & 0 \\
        0 & \prod\limits_{j=0}^{N-1}(z-x_{N,j})
        \end{pmatrix}
.
\label{H}
\end{align}
It is readily verified that
%it follows by a straightforward calculation
$H(z)$ is a solution of the
interpolation problem:
\begin{enumerate}
\item[($\,H_a$)] $H(z)$ is analytic for $z\in \mathbb{C}\setminus X_N$;
\item[($\,H_b$)] at each $x_{N,k}\in X_N$, the second column of $H(z)$ is analytic and
the first column of $H(z)$ has a simple pole where the residue is
\begin{equation}
\Res\limits_{z=x_{N,k}} H(z)=\lim_{z\rightarrow
x_{N,k}} H(z)
\begin{pmatrix} 0 & 0\  \\ \\
 \dfrac{  1  }{ w_{N,k}   }
\prod\limits^{N-1}_{j=0\atop j\neq k}(x_{N,k}-x_{N,j}  )^{-2}  & 0\  \end{pmatrix};
 \label{H_res}
\end{equation}
\item[($\,H_c$)] as $z\rightarrow \infty$,
$$
H(z)=\left(  I + O\left(\frac1z\right)  \right)
\begin{pmatrix}
z^{n-N}  & 0\\
0 & z^{-n+N}
\end{pmatrix}.
$$
\end{enumerate}

Next, we shall employ an idea of Ou and Wong \cite{OuWong}   to remove the poles in $H(z)$, and transform this interpolation problem  into an equivalent RHP;
see also  \cite{WangWong}.
Let $\delta>0$ be a sufficiently small number, and we define
\begin{equation}
    R(z)
:=  H(z)
    \begin{pmatrix}
    1 & \ 0 \ \   \\\\
    \dfrac{\mp ie^{ \pm N \pi iz  }  \cos(N\pi z) }{   N\pi w(z)  \prod\limits_{j=0}^{N-1}(z-x_{N,j})^2  } & \ 1 \
    \end{pmatrix}
\label{R_H_ort}
\end{equation}
for $z\in\Omega_{\pm}$, and
\begin{equation}
R(z):=H(z)
\label{R_H}
\end{equation}
for $z\notin\Omega_{\pm}$,
where the domains $\Omega_{\pm}$ are shown in Figure \ref{Figure_R}.
Let $\Sigma_+$ be the boundary of $\Omega_+$ in the upper half-plane, and $\Sigma_-$ be the mirror image of $\Sigma_+$ in the lower half-plane. For the contour $\Sigma=(0,1)\cup\Sigma_\pm$, see also Figure
\ref{Figure_R}.
\begin{figure}[htbp]
\begin{center}
\includegraphics[scale=1]{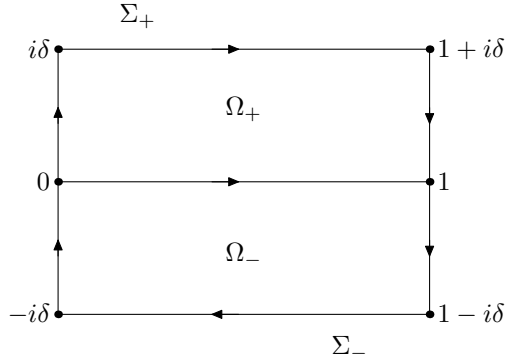}
\caption{The domains $\Omega_{\pm}$ and the contour $\Sigma$.}
\label{Figure_R}
\end{center}
\end{figure}

\begin{lemma}
For each $x_{N,k}\in X_N$,  the singularity of $R(z)$ at $x_{N,k}$ is removable; that is,
$\Res\limits_{z=x_{N,k}}R(z)=0$.
 \label{R_lem}
\end{lemma}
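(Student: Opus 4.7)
The plan is to check removability pointwise at each node $x_{N,k}$, using the fact that the multiplication defining $R$ only affects the first column of $H$. Write $R = H \cdot M$ for $z \in \Omega_\pm$, where $M$ is the lower-triangular matrix whose sole nontrivial off-diagonal entry
\[
    M_{21}(z) = \frac{\mp i\, e^{\pm N\pi i z}\cos(N\pi z)}{N\pi\, w(z)\prod_{j=0}^{N-1}(z-x_{N,j})^2}
\]
has a pole of order at most one at $x_{N,k}$ (a double zero from the product, partially canceled by a simple zero of $\cos(N\pi z)$). Then the second column of $R$ equals the second column of $H$, which is already analytic at $x_{N,k}$ by property $(H_b)$. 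The first column of $R$ is $R_1(z) = H_1(z) + H_2(z)\,M_{21}(z)$, so removability reduces to showing that the simple pole of $H_1$ (coming from $(H_b)$) is exactly canceled by the simple pole produced by multiplying the analytic column $H_2$ by $M_{21}$.

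Next, I would carry out the key local calculation: expand the numerator of $M_{21}(z)$ at $z=x_{N,k}$. Since $x_{N,k}=(k+\tfrac12)/N$, one has $\cos(N\pi x_{N,k})=0$ and
\[
    \cos(N\pi z) = -N\pi(-1)^k(z-x_{N,k}) + O\!\bigl((z-x_{N,k})^2\bigr),
    \qquad
    e^{\pm N\pi i x_{N,k}} = \pm i\,(-1)^k.
\]
Multiplying these two factors and cancelling $N\pi$, the $\pm i$, and the $(-1)^{2k}$, the numerator and the double zero of $\prod(z-x_{N,j})^2$ combine to give
\[
    M_{21}(z) = \frac{-1}{w(x_{N,k})\prod_{j\neq k}(x_{N,k}-x_{N,j})^2}\cdot\frac{1}{z-x_{N,k}} + O(1)
    = \frac{-c_k}{z-x_{N,k}} + O(1),
\]
where $c_k := w_{N,k}^{-1}\prod_{j\neq k}(x_{N,k}-x_{N,j})^{-2}$ is precisely the constant appearing in $(H_b)$. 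The pleasing feature is that the expression is the same on both sides of $\Sigma_\pm$, so the analysis for $\Omega_+$ and $\Omega_-$ is identical.

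Finally, I would combine the two residues. From $(H_b)$ the singular part of $H_1(z)$ at $x_{N,k}$ is $\frac{c_k\,H_2(x_{N,k})}{z-x_{N,k}}$, while the computation above gives the singular part of $H_2(z)M_{21}(z)$ as $\frac{-c_k\,H_2(x_{N,k})}{z-x_{N,k}}$. The two contributions cancel, and since $H_2$ itself is analytic at $x_{N,k}$, the full matrix $R(z)$ has a removable singularity there, proving $\Res_{z=x_{N,k}} R(z)=0$. The main obstacle is the bookkeeping of signs in the local expansion (the interplay of the $\pm$ choice in $\Omega_\pm$, the sign $(-1)^k$ from $\sin(\pi(k+\tfrac12))$, and the imaginary unit from $e^{\pm i\pi/2}$); once one verifies that these signs collapse in such a way that both choices of $\pm$ yield the same residue $-c_k$, the removability follows routinely.
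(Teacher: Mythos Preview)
Your proof is correct and follows essentially the same approach as the paper's own proof: both verify that the second column of $R$ inherits analyticity from $H$ and that in the first column the simple pole of $H_1$ at $x_{N,k}$ is cancelled by the simple pole of $H_2\,M_{21}$, using the residue condition $(H_b)$. Your version is somewhat more explicit in tracking the signs in the local expansion of $M_{21}$ (which the paper condenses into the single line ``it is readily seen that''), but the logic and structure are identical.
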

\begin{proof}
For  $z\in \Omega_{\pm}$, it follows from (\ref{R_H_ort}) that
\begin{equation}
R_{11}(z)=H_{11}(z)+H_{12}(z)
\dfrac{\mp ie^{ \pm N \pi iz  }  \cos(N\pi z) }{   N\pi w(z)  \prod_{j=0}^{N-1}( z-x_{N,j} )^2  }
,
\qquad
R_{12}(z)=H_{12}(z)
.
\label{R_11}
\end{equation}
Since $H_{12}(z)$ is  analytic by ($H_b$),
the residue of $R_{12}(z)$ at $x_{N,k}$ is zero.
From \r{H_res}, we also have
\begin{equation}
\Res\limits_{z=x_{N,k}}H_{11}(z)=H_{12}(x_{N,k})
\frac{1}{w_{N,k}}
\prod\limits^{N-1}_{j=0\atop j\neq k}(x_{N,k}-x_{N,j}  )^{-2}.
\label{H_11}
\end{equation}
%%%%%%%%%%%%%%%%%%%%%%%%%%%%%%%%%%%%%%%%%%%%%%%%%%%%%%%%%%%
%%%%%%%%%%%%%%%%%%%%%%%%%%% Page 5 %%%%%%%%%%%%%%%%%%%%%%%%
%%%%%%%%%%%%%%%%%%%%%%%%%%%%%%%%%%%%%%%%%%%%%%%%%%%%%%%%%%%
On the other hand, it is readily seen that
\begin{equation}
    \Res\limits_{z=x_{N,k}} H_{12}(z)
    \dfrac{\mp ie^{ \pm N \pi  iz  }  \cos(N\pi z) }{   N\pi  w(z) \prod_{j=0}^{N-1}( z-x_{N,j} )^2  }
=
    -H_{12}(x_{N,k})
    \frac{1}{w_{N,k}}
    \prod\limits^{N-1}_{j=0\atop j\neq k}(x_{N,k}-x_{N,j}  )^{-2}.
\label{H_12}
\end{equation}
Thus, applying (\ref{H_11}) and (\ref{H_12}) to (\ref{R_11}) shows
that the residue of $R_{11}(z)$ at $z=x_{N,k}$ is zero.
The entries in the second row of the matrix $R(z)$ can be studied similarly. This completes the proof of the lemma.
\end{proof}
Note that
this transformation makes $R_+(x)$ and $R_-(x)$
continuous on the interval (0,1).
As a consequence, we
have created several jump discontinuities on the contour $\Sigma$ in the complex plane.

It is easily verified that $R(z)$ is a solution of the following RHP:
\begin{enumerate}
\item[($R_a$)] $R(z)$ is analytic for $z\in \mathbb{C}\setminus \Sigma$;
\item[($R_b$)] the jump conditions on the contour $\Sigma$: for $x\in (0,1)$,
\begin{equation}
    R_+(x)
=
    R_-(x)
    \begin{pmatrix}
    1 & 0\\
    r(x) & 1
    \end{pmatrix}
,
\label{R_real}
\end{equation}
where
\begin{equation}
r(x)= \frac{4\cos^2(N\pi x)}{ c_1 w(x)
 \prod^{N-1}_{j=0}(x-x_{N,j}  )^2 },
 \qquad\qquad c_1:=2N\pi i;
\label{r}
\end{equation}
for $z\in \Sigma_{\pm}$,
\begin{equation}
R_+(z)=R_-(z)
\begin{pmatrix}
1 & 0\\
\widetilde{r}_{\pm}(z)  & 1
\end{pmatrix},
\label{R:jum:S}
\end{equation}
where
\begin{equation}
\widetilde{r}_{\pm}(z)=\frac{\mp 2 e^{ \pm N\pi iz }
\cos(N\pi z )}{  c_1 w(z) \prod^{N-1}_{j=0}(z-x_{N,j}  )^2  } ;
\label{rtilde}
\end{equation}
\item[($R_c$)] as $z\rightarrow \infty$,
\begin{equation}
R(z)=\left(  I + O\left(\frac1z\right)  \right)
\begin{pmatrix}
z^{n-N}  & 0\\
0 & z^{-n+N}
\end{pmatrix}.
\label{R_inf}
\end{equation}
\end{enumerate}
%For simplicity, here we have introduced the  notation $c_1:=2N\pi i$.

%For the sake of simplicity, we put
%\begin{align}
%	B(z):=\prod\limits_{j=0}^{N-1}(z-x_{N,j}).
% \label{B}
%\end{align}

\section{The  Auxiliary Functions}
To normalize the condition ($R_c$) in the RHP for $R(z)$, we need some
auxiliary functions.
\begin{defi}
The $g$-function is the logarithmic potential  defined by
\begin{equation}
g(z):=\int_{0}^{1}  \log(z-s) \mu(s)\,ds, \qquad \qquad z\in \mathbb{C} \setminus (-\infty,1],
\label{g_def}
\end{equation}
where $\mu(x)$ is the density function given in \r{mu},
and the so-called $\phi$-function is defined by
\begin{equation}
\phi(z):=l /2 -g(z),
\qquad \qquad z\in \mathbb{C}\setminus (-\infty,1],
\label{phi_def}
\end{equation}
where $l:=2\int_0^1\log|a-s|\mu(s) ds$ is called the Lagrange multiplier, a being the constant
given in \r{MRS}.
\end{defi}
Now, we  define
\begin{align}
	\wt g(z):= g(z) -\frac 1c\varphi(z),%=\int_0^1 \log(z-s) (\mu(s)-\frac1c)ds,
 \label{gt_def}
\end{align}
where
\begin{align}
	\varphi(z)= \int_0^1 \log(z-s) ds.
	\label{vp}
\end{align}
In \r{g_def} and \r{phi_def}, we view $\log(x-s)$ as an analytic function of the variable $s$ with a branch cut along $(x,+\infty)$. Since
$$
	\log_\pm(x-s) =\log|x-s|\pm \pi i, \qquad
	s\in(x,+\infty),
$$
we obtain
\begin{align*}
	g_\pm(x)&=\int_0^x   \log(x-s) \mu(s) ds + \int_x^1 (  \log|x-s|\pm \pi i  ) \mu(s)ds
	\hh
		\int_0^1 \log|x-s| \mu(s) ds \pm \pi i  \int_x^1 \mu(s)ds.
\end{align*}
Similarly, we have
$$
	\varphi_\pm (x) =\int_0^x \log|x-s| ds +\int_x^1 \log_\pm (x-s) ds
	=\int_0^1 \log|x-s| ds \pm \pi i (1-x).
$$
From \eqref{gt_def},  it follows that
\begin{align}
	\gt_\pm(x)=\int_0^1 \log|x-s| (\mu(s)-\frac1c ) ds\pm \pi i\int_x^1 (\mu(s)-\frac1c) ds,
	\qquad x\in(0,1).
 \label{g_pm}
\end{align}
Moreover, it can be easily verified  that the $\gt$-function satisfies the jump conditions
\begin{align}
	&\gt_+(x)-\gt_-(x)=2\pi i(1-\frac1c),
	\qquad x\in(-\infty,0)
 \label{g_jum1}
\end{align}
and
\begin{equation}
	\gt_+(x)-\gt_-(x)=
	\left\{
	\begin{aligned}
	&2\pi i (1-\frac1c ), &&x\in(0,a), \\
&{	2\pi i \int_x^b }( \mu(s)-\frac1c) ds, && x\in(a,b) , \\
	&0, && x\in(b,1).
	\end{aligned}
	\right.
 \label{g_jum2}
\end{equation}
On account of (\ref{g_def}), (\ref{g_jum1}) and \eqref{g_jum2}, one readily sees that
$e^{n\gt(z)}$ can be analytically extended to $\mathbb{C}\setminus [a,b]$
and
\begin{align}
	e^{n\gt(z)} =z^{n-N} [ 1+ O(\frac1z) ]
	\qquad \text{as\ } z\rightarrow \infty.
	\label{g_z}
\end{align}
Hence, by adopting the convention that $\s_3$ denotes the Pauli matrix
\begin{align}
	\s_3=
	\begin{pmatrix}
	1 & 0 \\
	0 & -1
	\end{pmatrix},
\end{align}
we introduce the transformation
\begin{equation}
	T(z):=(c_1e^{ nl })^{-\s_3/2} R(z)
	e^{-n\wt g(z)\s_3}
	  (c_1e^{ nl })^{\s_3/2}
 \label{T}
\end{equation}
to normalize  the behavior of $R(z)$
at $z=\infty$, where $c_1$ is given in \eqref{r}.

 As we have  mentioned before,
 the density function $\mu(x)$ reaches its upper constraint near the endpoints of the interval of the orthogonality;
see (\ref{mu}).
Furthermore, since $\mu(x)$ is not differentiable  at the point $x=a$, the function $\phi(x)$ is  not analytic in the neighborhood of $x=a$, and we can not construct our global parametrix (such as Airy parametrix) by using $\phi(z)$ in the region which includes the critical point $a$.
Hence, for our future analysis, a few more auxiliary functions are needed.

Note by \eqref{g_jum2} that ${\gt(z)}$ is analytic for $z\in(b,1)$.
This together with (4.5)   evokes us to  consider a modified measure $\mu^*(x)=\mu(x)-\frac1c$.
Furthermore, we want to find a complex function $\nu(z)$ which  satisfies the requirement
\begin{align}
	\nu_\pm(x)
	&=
	\pm \pi i (\mu(x)-\frac1c)
\end{align}
for $x\in(a,b)$.

%\begin{align}
%\nu_\pm(x)=
%	\mp \frac2c i  \arccos(\frac{  c  }{  2\sqrt{x-x^2}  })
%.
%\label{nu_mus}
%\end{align}

Let
\begin{align}
	\nu(z):=\frac2c \log[c/2 - \sqrt{(z-a)(z-b)}  ]-
	\frac1c \log(z-z^2),
 \label{nu}
\end{align}
where  $\sqrt{(z-a)(z-b)}$ is analytic in
$\mathbb{C}\setminus[a,b]$ and behaves like $z$ as $z\rightarrow +\infty$,
and  where $\log(z-z^2)$ is an analytic function with branch cuts along
$(-\infty,0]\cup[1,+\infty)$.

In view of the identities
\begin{align}
	 \arccos x
	=- i \log( x+ i \sqrt{1-x^2} ),
	\qquad
	-\arccos x
	=- i \log( x- i \sqrt{1-x^2} )
 \label{arccos}
\end{align}
for $x\in[-1,1]$,   we have
\begin{align}
	\nu_\pm(x)
	&=\mp   \frac2c i\arccos(\frac{c}{2 \sqrt{x-x^2} } ),
% \hh
% 	\pm \pi i \mu^*(x)
	 \qquad  x\in(a,b).
 \label{nu_pm_pre}
\end{align}
On the other hand, the measure $\mu(x)-1/c$ can be rewritten as
\begin{align}
	\mu(x)-\frac1c
	=- \frac{2}{\pi c}  \arccos(\frac{  c  }{  2\sqrt{x-x^2}  }),
	\qquad x\in(a,b).
 \label{mus}
\end{align}
From \eqref{nu_pm_pre} and \eqref{mus},  it clearly follows that
\begin{align}
	\nu_\pm(x)
	&=
	\pm \pi i (\mu(x)-\frac1c)
 \label{nu_pm},
 \qquad x\in(a,b).
\end{align}
Now, we are ready to introduce the auxiliary function
\begin{align}
	\pt(z)&:=\int_a^z \nu(s) ds,
	\qquad z\in\mathbb{C}\setminus (-\infty,0]\cup[a,+\infty),
 \label{pt_def}
\end{align}
where the path of integration from $a$ to $z$ lies entirely in the region
$z\in\mathbb{C}\setminus (-\infty,0]\cup[a,+\infty)$, except for the initial point $a$.

Similarly, we set
\begin{equation}
	\ps(z):=\int_b^z \nu(s) ds,
	\qquad z\in\mathbb{C}\setminus (-\infty,b]\cup[1,+\infty),
 \label{ps_def}
\end{equation}
where the path of integration from $b$ to $z$ lies entirely in the region
$z\in\mathbb{C}\setminus (-\infty,b]\cup[1,+\infty)$, except for the initial point $b$.

The functions $\pt(z)$ and $\ps(z)$ defined above play an important role
in our argument, and the following are some of their properties.

%%%%%%%%%%%%%%%%%%%%%%%%  mapping  %%%%%%%%%%%%%%%%%%%%%%%%%%
\begin{prop}
The mapping properties of $\pt(z)$ are given by:
\begin{align}
	\Re \pt_\pm (x) <  \pt(0), \quad
	\Im \pt_\pm (x)=\mp \frac1 c \pi x, \quad
	x\in(-\infty,0);
\label{pt_map_min}
\\
	\pt(x)<0, \quad
	\Im \pt(x)=0, \quad
	x\in[0,a);
\label{pt_map_0}
\\
	\pt(a)=0, \quad
	%\Re \pt_\pm(b)=0, \quad
	 \pt_\pm(b)=\pm (1-\frac1c)\pi i ;
\label{pt_map_con}
\\
	\Re \pt_\pm(x)=0, \quad
	\arg \pt_\pm(x)=\mp \frac\pi 2, \quad
	x\in(a,b);
\label{pt_map_ab}
\\
	\Re \pt_\pm(x)<0, \quad
	\Im \pt_\pm(x)=\pm(1-\frac1c)\pi, \quad
	x\in(b,1];
\label{pt_map_b}
\\
	\Re \pt_\pm (x) <  \Re \pt_\pm(1), \quad
	\Im \pt_\pm (x)=\pm (1-\frac1c x) \pi , \quad
	x\in(1,+\infty).
\label{pt_map_1}
\end{align}
\label{pro_pt_map}
\end{prop}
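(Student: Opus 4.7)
The plan is to compute the boundary values $\nu_\pm(x)$ of $\nu(z)$ on each subinterval of the real line from the explicit formula \eqref{nu}, and then integrate to recover $\pt_\pm(x)$. Two anchors drive the argument: $\pt(a)=0$ (by the definition \eqref{pt_def}), and the identity \eqref{nu_pm} on $(a,b)$ just established.

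On $(a,b)$ one integrates \eqref{nu_pm} from $a$ to get $\pt_\pm(x)=\pm i\pi\int_a^x(\mu(s)-1/c)\,ds$, which is purely imaginary with the claimed sign because $\mu<1/c$ there; this yields \eqref{pt_map_ab}. Setting $x=b$ and using the normalization $\int_0^1\mu\,ds=1$ together with $\mu\equiv 1/c$ on $[0,a]\cup[b,1]$ and $a+b=1$ gives $\int_a^b(\mu-1/c)\,ds=1-1/c$, hence $\pt_\pm(b)=\pm(1-1/c)\pi i$, as in \eqref{pt_map_con}. For the outer intervals I track arguments: the branch of $\sqrt{(z-a)(z-b)}$ fixed by $\sqrt{(z-a)(z-b)}\sim z$ at infinity equals $-\sqrt{(a-x)(b-x)}$ on $(-\infty,a)$ and $+\sqrt{(x-a)(x-b)}$ on $(b,+\infty)$, so $c/2-\sqrt{(z-a)(z-b)}$ is real and positive on $(-\infty,a)\cup(b,1)$ and real and negative on $(1,+\infty)$. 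Consequently $\log(c/2-\sqrt{(z-a)(z-b)})$ has its sole zero at $z=1$ and jumps only across $(1,+\infty)$, while $\log(z-z^2)$ jumps across $(-\infty,0)\cup(1,+\infty)$. Combining these contributions gives $\Im\nu_\pm(x)=\mp\pi/c$ on $(-\infty,0)$ and on $(1,+\infty)$, with $\nu$ real on $[0,a)\cup(b,1)$. Integrating from $a$ to the left, and from $b$ respectively $1$ to the right, starting from the already-known imaginary values at these anchors, yields the imaginary parts in \eqref{pt_map_min}--\eqref{pt_map_1}.

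The strict real-part inequalities reduce to monotonicity of $\Re\pt_\pm$, i.e.\ to the sign of $\Re\nu_\pm$ on each subinterval. Using $(x-a)(x-b)=x^2-x+c^2/4$ and $ab=c^2/4$, one may write $\Re\nu=\frac1c\log\bigl[(c/2+\sqrt{(a-x)(b-x)})^2/|x(1-x)|\bigr]$ on $(-\infty,a)$ and $\Re\nu=\frac1c\log\bigl[(c/2-\sqrt{(x-a)(x-b)})^2/|x(1-x)|\bigr]$ on $(b,+\infty)$. Expanding the squares shows that the argument of the log exceeds $1$ on $(-\infty,a)$, so $\Re\nu>0$ there; this makes $\Re\pt_\pm$ increasing, forcing $\pt(x)<0$ on $[0,a)$ and $\Re\pt_\pm(x)<\pt(0)$ on $(-\infty,0)$. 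On $(b,1)$ the opposite inequality $\Re\nu<0$ reduces to $\sqrt{(x-a)(x-b)}+\sqrt{x(1-x)}>c/2$, which one verifies by noting that the left-hand side equals $c/2$ at both endpoints $x=b$ and $x=1$ and has a unique interior critical point (a maximum, by the sign of the derivative at each endpoint); on $(1,+\infty)$ the inequality reduces to the trivial $c^2/4<(x-a)(x-b)$, equivalently $x(x-1)>0$. The main obstacle throughout is the careful branch-tracking of $c/2-\sqrt{(z-a)(z-b)}$ and $\log(z-z^2)$ near the real axis, which simultaneously governs the imaginary jumps and the signs of the real parts.
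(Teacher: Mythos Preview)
Your proposal is correct and follows the same overall strategy as the paper: compute the boundary values $\nu_\pm(x)$ explicitly on each real subinterval and integrate from the anchor $\pt(a)=0$, tracking the branches of $\sqrt{(z-a)(z-b)}$ and $\log(z-z^2)$ exactly as the paper does in \eqref{nu_pm1}, \eqref{nu1}, \eqref{nu_pm2}, \eqref{nu2}. The one substantive difference is the treatment of $(b,1]$: the paper exploits the symmetry $\nu(1-s)=-\nu(s)$ (a consequence of $a+b=1$) to obtain $\pt_\pm(x)=\pt(1-x)\pm(1-\tfrac1c)\pi i$ via the change of variable \eqref{t6}, thereby reducing \eqref{pt_map_b} directly to the already-established \eqref{pt_map_0}; you instead verify $\Re\nu<0$ on $(b,1)$ by the endpoint/critical-point analysis of $\sqrt{(x-a)(x-b)}+\sqrt{x(1-x)}$. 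Both arguments are valid---yours is self-contained, the paper's symmetry route is shorter and sidesteps the calculus.
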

\begin{proof}

From the definition in (\ref{nu}), we note that $\nu(z)$ is  analytic in $\mathbb{C}\setminus (-\infty,0]\cup[a,b]
\cup[1,+\infty)$.
When $x\in(-\infty,0)$, it is readily seen that
\begin{align}
	\nu_\pm(x)
 =
 	\nu_1(x)  \mp \frac1c \pi i,
 \label{nu_pm1}
\end{align}
where
\begin{align}
	\nu_1(x)=\frac2c\log[c/2 +\sqrt{ (a-x)(b-x) } ]
 	-\frac1c \log(x^2-x).
 \label{nu1}
\end{align}
From \r{pt_def} and (\ref{nu_pm1}), we have
\begin{align}
	\pt_\pm(x)
	&= \int_a^0 \nu(s) ds+ \int_0^x \nu_\pm(s) ds
 \hh
 	\pt(0)+ \int_0^x
 	\nu_1(s)
 	ds
 	\mp \frac1c \pi x i.
 \label{pt_p_min}
\end{align}
It is easily verified that
$
	\nu_1(s)>0
$
for $s\in(-\infty,0)$, and
hence
(\ref{pt_map_min}) holds by  (\ref{pt_p_min}).

For $x\in[0,a)$,
$\nu(x)$ is analytic and it can be shown from \r{nu} that
$\nu(x)$ is real and positive. Thus, from \r{pt_def} it follows that
\begin{align}
 \Im \pt(x)=0
 \qquad\ \  \mbox{and} \qquad\ \
	\pt(x)
	&<0;
 \label{pt_p_0}
\end{align}
%from which $\Im \pt(x) = 0$.
i.e., \eqref{pt_map_0} holds.

On the other hand, $\pt(a)=0$ follows immediately from the definition in (\ref{pt_def}).
Note that  $\nu_\pm(x)=\pm \pi i (\mu(x)-\frac1c)$ for $x\in(a,b)$; see (\ref{nu_pm}).  Hence,   (\ref{pt_map_con}) holds by (\ref{pt_def}).
If  $x\in(a,b)$, we also have from (\ref{nu_pm})
\begin{align}
	\pt_\pm(x)&=\pm \pi i \int_a^x (\mu(s)-\frac1c) ds.
	\label{t3}
\end{align}
Note that  $\mu(s)-\frac1c<0$ for $s\in(a,b)$ by  (\ref{mu}),  and hence (\ref{t3}) gives (\ref{pt_map_ab}).

For $x\in(b,1]$, it follows from (\ref{pt_def}) that
\begin{align}
	\pt_\pm(x)&= \int_a^b  \nu_\pm(s)   ds +\int_b^x  \nu(s)   ds
	\hh \pt_\pm(b) +\int_b^x  \nu(s)   ds .
 \label{t4}
\end{align}
By a change of variable $s=1-\zeta$, it can be verified that
\begin{align}
	\int_b^x  \nu(s)   ds
	&= \int_a^{1-x} -\nu(1-\zeta) d\zeta
 \hh
 	\int_a^{1-x} \nu(\zeta) d\zeta
\hh
	\pt(1-x).
 \label{t6}
\end{align}
Applying  (\ref{pt_map_con}) and (\ref{t6}) to  (\ref{t4}) yields
\begin{align}
	\pt_\pm(x)
	&=
 	\pt(1-x) \pm (1-\frac1c)\pi i.
 \label{t5}
\end{align}
Thus, (\ref{pt_map_b}) holds by (\ref{pt_map_0}) and (\ref{t5}).

For $x\in(1,+\infty)$, we have
\begin{align}
	\nu_\pm(x)
 =
 	\nu_2(x)   \mp \frac1c \pi i,
 \label{nu_pm2}
\end{align}
where
\begin{align}
	\nu_2(x)=\frac2c\log[\sqrt{ (x-a)(x-b) }-c/2 ]
 	-\frac1c \log(x^2-x).
 \label{nu2}
\end{align}
From (\ref{pt_map_b}) and (\ref{nu_pm2}), it follows that
\begin{align}
	\pt_\pm(x)
	&=
 	\int_a^1 \nu_\pm(s) ds +\int_1^x \nu_\pm(s) ds
 \hh
 	\pt_\pm(1) +\int_1^x \nu_2(s) ds  \mp \frac1c (x-1)\pi i
 \hh
 	\Re \pt_\pm(1)+\int_1^x \nu_2(s) ds \pm (1-\frac1c x) \pi i.
 \label{t9}
\end{align}
Note that  $\nu_2(s)<0$ for $s\in(1,+\infty)$.
Thus, $\int_1^x \nu_2(s) ds<0$ and (\ref{pt_map_1}) holds.

\end{proof}

\begin{figure}[!tpb]
\begin{minipage}[t]{0.45\linewidth}
\centering
\includegraphics[scale=1]{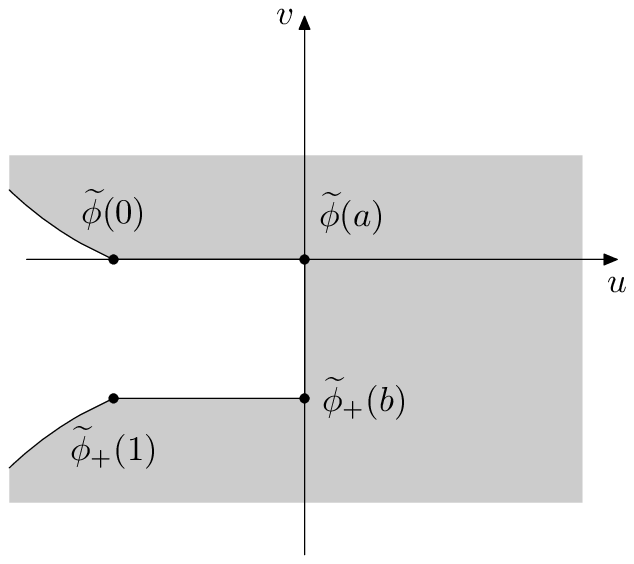}
\\
\medskip
(a) The upper half $z$-plane under $\pt(z)$
\end{minipage}
\hfill
\begin{minipage}[t]{0.45\linewidth}
\centering
\includegraphics[scale=1]{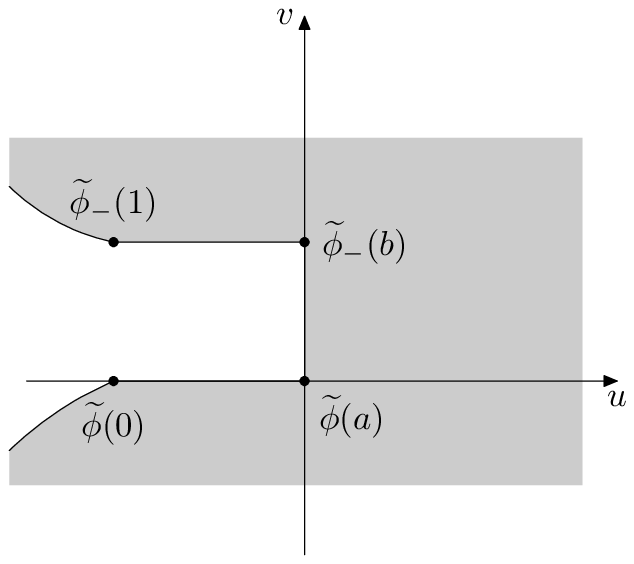}
\\
\medskip
(b) The lower half $z$-plane under $\pt(z)$
\end{minipage}
\caption{The image of the $z$-plane  under the mapping  $\pt(z)$.}
\label{pt_fig}
\end{figure}
A geometric interpretation of the results established above is given in Figure
\ref{pt_fig}.
Furthermore, if we let $\mathbb{C}_+=\{z: \text{Im\,}z>0\}$
and $\mathbb{C}_-=\{z: \text{Im\,}z<0\}$, then for $z\in\mathbb{C}_\pm$
we have from \r{nu_pm} and \r{pt_def}
\begin{align}
	\ps(z)
 =
 	\int_b^z
 	\nu(s) ds
=
 	\int_a^z \nu(s) ds \pm \pi i \int_b^a \(  \mu(s) -\frac1c  \) ds
=
 	\pt(z)\pm \pi i (\frac1c -1)
 .
 \label{ps_pt}
\end{align}
The results in the following proposition can now
be proved either directly from definition (\ref{ps_def}), or by using \r{ps_pt} and Proposition \ref{pro_pt_map}.

\begin{prop}
The mapping properties of $\ps(z)$ are given by:
\begin{align}
	\Re \ps_\pm (x) <  \ps(1), \quad
	\Im \ps_\pm (x)=\pm \frac1 c  (1-x) \pi, \quad
	x\in(1,+\infty);
\label{ps_map_1}
\\
	\ps(x)<0, \quad
	\Im \ps(x)=0, \quad
	x\in(b,1];
\label{ps_map_b}
\\
	\ps(b)=0, \quad
	%\Re \pt_\pm(b)=0, \quad
	 \ps_\pm(a)=\pm (\frac1c-1)\pi i ;
\label{ps_map_con}
\\
	\Re \ps_\pm(x)=0, \quad
	\arg \ps_\pm(x)=\pm \frac\pi 2, \quad
	x\in(a,b);
\label{ps_map_ab}
\\
	\Re \ps_\pm(x)<0, \quad
	\Im \ps_\pm(x)=\pm (\frac1c-1)\pi, \quad
	x\in[0,a);
\label{ps_map_0}
\\
	\Re \ps_\pm (x) <  \Re \ps_\pm(0), \quad
	\Im \ps_\pm (x)=\pm (\frac{1-c}{c}- \frac1c  x )\pi, \quad
	x\in(-\infty,0).
\label{ps_map_min}
\end{align}
\label{pro_ps_map}
\end{prop}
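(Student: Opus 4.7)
The plan is to apply the identity $\ps(z) = \pt(z) \pm \pi i (1/c - 1)$ for $z \in \cpm$ already established in \eqref{ps_pt}, and read off each of the six mapping properties of $\ps$ from the corresponding property of $\pt$ in Proposition \ref{pro_pt_map}. Taking nontangential limits to the real axis gives $\ps_\pm(x) = \pt_\pm(x) \pm \pi i (1/c - 1)$ wherever the right-hand side admits boundary values; since the shift is purely imaginary, real parts transfer verbatim, while imaginary parts are translated by $\pm \pi(1/c - 1)$.

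With this in hand, the six cases in Proposition \ref{pro_ps_map} become routine bookkeeping. On each of the intervals $(-\infty, 0)$, $[0, a)$, $(a, b)$, $(b, 1]$, $(1, +\infty)$, one substitutes the corresponding line of Proposition \ref{pro_pt_map} into the displayed identity and reads off the real and imaginary parts. The endpoint values are immediate: $\ps(b) = 0$ directly from \eqref{ps_def}, and $\ps_\pm(a) = \pt_\pm(a) \pm \pi i(1/c - 1) = \pm (1/c - 1) \pi i$ since $\pt(a) = 0$. The two arithmetic identities used repeatedly are the normalization $\int_a^b(\mu(s) - 1/c)\,ds = 1 - 1/c$ and the real boundary value $\Re \ps_\pm(0) = \pt(0)$, both consequences of material already in Section 4.

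The only two places that require a genuine sanity check are the intervals $(a, b)$ and $(b, 1]$. On $(a, b)$, $\Re \pt_\pm(x) = 0$ and $\Im \pt_\pm(x) = \pm \pi \int_a^x(\mu(s) - 1/c)\,ds$; adding the imaginary shift and invoking the normalization yields $\Im \ps_\pm(x) = \mp \pi \int_x^b(\mu(s) - 1/c)\,ds$, which is strictly positive for $+$ and negative for $-$ because $\mu - 1/c < 0$ there, producing the sign flip $\arg \ps_\pm = \pm \pi/2$ (opposite to the $\arg \pt_\pm = \mp \pi/2$ in Proposition \ref{pro_pt_map}). On $(b, 1]$, the shift exactly cancels the jump $\Im \pt_\pm(x) = \pm(1 - 1/c)\pi$, so $\Im \ps_\pm(x) = 0$ and $\ps_+(x) = \ps_-(x)$, consistent with $\ps$ being analytic on $(b,1)$. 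No genuinely new analysis is required beyond Proposition \ref{pro_pt_map}; the only delicate point is verifying that the sign conventions in \eqref{ps_pt} make these two cancellations work out correctly, which in turn reduces to the single normalization identity above.
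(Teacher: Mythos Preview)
Your proposal is correct and follows exactly the route indicated in the paper: the paper does not give a detailed argument for Proposition~\ref{pro_ps_map} but simply remarks that it ``can now be proved either directly from definition~\eqref{ps_def}, or by using~\eqref{ps_pt} and Proposition~\ref{pro_pt_map}.'' You have chosen the second option and correctly carried out the bookkeeping, including the sign check on $(a,b)$ via the normalization $\int_a^b(\mu(s)-1/c)\,ds=1-1/c$ and the cancellation on $(b,1]$.
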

The images of the upper (lower) half of $z$-plane under the mapping $\ps(z)$ is
depicted in Figure \ref{ps_fig}.
The next proposition shows the relationship between the $\phi$-function and
the $\wt\phi$-function ($\phi^*$-function).

%%%%%%%%%%%%%%%%%%%%%%%%  connection %%%%%%%%%%%%%%%%%%%%%%%%%%

\begin{prop}
With $\phi(z)$ defined in (\ref{phi_def}), the following connection formulas
between the $\phi$-function and the $\wt\phi$-function ($\phi^*$-function)
hold:
\begin{align}
	\pt(z)=\phi(z)\pm \pi i (1-\frac1c z)
 \label{pt_phi}
\end{align}
and
\begin{align}
	\ps(z)=\phi(z)\pm \pi i \frac1c (1-z)
 \label{ps_phi}
\end{align}
for $z\in\mathbb{C}_\pm$.
\label{pro_pt_phi}
\end{prop}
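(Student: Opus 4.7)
The plan is to prove the upper-half-plane case $\pt(z)=\phi(z)+\pi i(1-z/c)$ for $z\in\mathbb{C}_+$; the case $z\in\mathbb{C}_-$ follows by complex conjugation, and \eqref{ps_phi} is immediate from \eqref{pt_phi} combined with the identity \eqref{ps_pt} that was already derived just before the proposition. Set $F(z):=\pt(z)-\phi(z)-\pi i(1-z/c)$. Since $\pt$ is analytic on $\mathbb{C}\setminus((-\infty,0]\cup[a,+\infty))$ and $\phi$ is analytic on $\mathbb{C}\setminus(-\infty,1]$, the function $F$ is analytic throughout $\mathbb{C}_+$. I would show $F\equiv 0$ by verifying that the boundary values $F_+(x)$ vanish on the band $(a,b)$ and then invoking the Schwarz reflection principle together with the identity theorem.

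For $x\in(a,b)$, formula \eqref{t3} gives the purely imaginary expression $\pt_+(x)=\pi i\int_a^x(\mu(s)-1/c)\,ds$. Repeating the branch computation that produced \eqref{g_pm} but applied to $g$ in place of $\wt g$, one obtains
\begin{align*}
\phi_+(x)=l/2-\int_0^1\log|x-s|\mu(s)\,ds-\pi i\int_x^1\mu(s)\,ds.
\end{align*}
Separating real and imaginary parts, the equation $F_+(x)=0$ reduces to two conditions: (i) $\int_0^1\log|x-s|\mu(s)\,ds=l/2$ for every $x\in(a,b)$, and (ii) $\int_a^x(\mu(s)-1/c)\,ds+\int_x^1\mu(s)\,ds=1-x/c$. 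Condition (ii) rearranges to $\int_a^1\mu(s)\,ds=1-a/c$, which follows from the normalization $\int_0^1\mu=1$ together with $\mu\equiv 1/c$ on $[0,a]$; this step is elementary.

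Condition (i) states that the logarithmic potential $J(x):=\int_0^1\log|x-s|\mu(s)\,ds$ is constant $=l/2$ on the band $[a,b]$. This is the Euler--Lagrange variational condition for the constrained equilibrium measure and is the main analytic obstacle in the argument. I would verify it either by invoking the general theory of constrained equilibrium problems (cf.\ \cite{Arno} and \cite{BaikBook}), using the observation recorded after \eqref{V} that the external field of the equilibrium problem vanishes identically on $[0,1]$, or by a direct computation: differentiating under the integral sign, substituting the explicit form \eqref{mu} of $\mu$, and checking that $J'(x)=0$ on $(a,b)$, with $J(a)=l/2$ holding by the very definition of $l$. Once (i) is established, $F_+\equiv 0$ on $(a,b)$; since $F$ is analytic in $\mathbb{C}_+$ and extends continuously to $(a,b)$ with value $0$, the Schwarz reflection principle extends $F$ analytically across $(a,b)$ into $\mathbb{C}_-$, and the identity theorem on the connected open set $\mathbb{C}_+\cup(a,b)\cup\mathbb{C}_-$ forces $F\equiv 0$ on $\mathbb{C}_+$. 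Substituting \eqref{pt_phi} into \eqref{ps_pt} finally yields
\begin{align*}
\ps(z)=\phi(z)\pm\pi i(1-z/c)\pm\pi i(1/c-1)=\phi(z)\pm\pi i(1-z)/c,
\end{align*}
which is \eqref{ps_phi}.
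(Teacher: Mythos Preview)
Your argument is correct, but it is organized differently from the paper's. The paper works at the level of derivatives: it introduces $\hat\phi(z):=-g'(z)\mp\pi i/c$ on $\mathbb{C}_\pm$, checks that $\nu(z)-\hat\phi(z)$ has no jumps across $(-\infty,0)\cup(a,b)\cup(1,\infty)$ so that it extends to an entire function, and then shows it vanishes on $(a,b)$ via the symmetry $\nu_+=-\nu_-$, $\hat\phi_+=-\hat\phi_-$. The second of these symmetries is $g'_+(x)+g'_-(x)=0$ on $(a,b)$, and the paper verifies it from the explicit closed form \eqref{g:der} for $g'$ (itself obtained through the integral identities \eqref{I1}--\eqref{I3}). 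Having $\nu=-g'\mp\pi i/c$, the paper integrates from $a$ to $z$ and uses $g_+(a)=l/2+\pi i(1-a/c)$ to recover \eqref{pt_phi}; \eqref{ps_phi} then follows from \eqref{ps_pt}, exactly as you do.

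Your route stays at the level of the antiderivatives: you compare $\pt$ with $\phi+\pi i(1-z/c)$ directly, reduce $F_+=0$ on $(a,b)$ to the Euler--Lagrange identity $J(x)=l/2$ for the equilibrium measure together with an elementary normalization check, and finish with a boundary-uniqueness/Schwarz-reflection argument. The two proofs rest on the same analytic fact --- constancy of the logarithmic potential on the band (equivalently $g'_++g'_-=0$ there) --- but you invoke it as a structural property of the equilibrium measure, while the paper verifies it by explicit computation from \eqref{g:der}. Your version is cleaner conceptually and transplants to other weights without recomputing $g'$; the paper's version is self-contained once the closed form of $g'$ is in hand and avoids appealing to the variational theory.
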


%%%%%%%%%%%%%%%%%%%%%%%%%%%   ps_fig
\medskip
\begin{figure}[!htpb]
\begin{minipage}[t]{0.45\linewidth}
\centering
\includegraphics[scale=1]{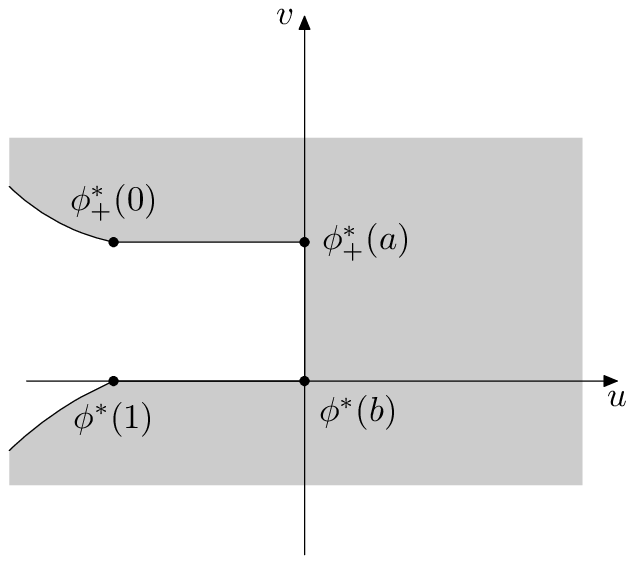}
\\
\medskip
(a) The upper half $z$-plane under $\ps(z)$
\end{minipage}
\hfill
\begin{minipage}[t]{0.45\linewidth}
\centering
\includegraphics[scale=1]{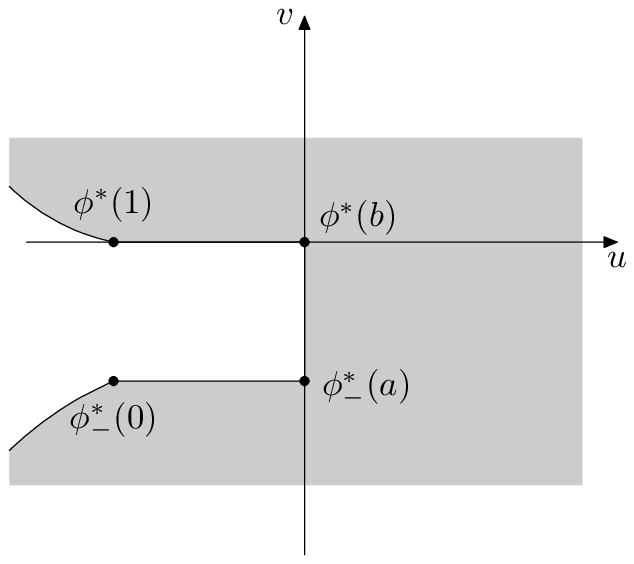}
\\
\medskip
(b) The lower half $z$-plane under $\ps(z)$
\end{minipage}
\caption{The image fo the $z$-plane  under the mapping  $\ps(z)$.}
\label{ps_fig}
\end{figure}

To establish \eqref{pt_phi} and \eqref{ps_phi}, we need some properties of $g'(z)$. In fact, we shall  derive  explicit formulas for $g(z)$ and $g'(z)$.  From (\ref{mu}), we have
\begin{align}
	g(z)
 =\frac1c
	\int_0^a \log(z-s)ds +\frac1c \int_b^1 \log(z-s)ds
	+\int_a^b\log(z-s) \mu(s)ds.
	\label{g_cal}
\end{align}
Integration by parts gives
\begin{align}
	\int_a^b  \log(z-s) \mu(s) ds
 &=
			 \mu(s)\log(z-s) s \Big |_{a}^{b}
    			-\int_a^b s [ \log(z-s) \mu(s) ]' ds
 \hh
        		 \mu(s)\log(z-s) s \Big |_{a}^{b}
        		-
        		\int_a^b \mu(s) ds
		-\int_a^b s  \log(z-s) \mu(s) ' ds
		+z\int_a^b \frac{1}{z-s} \mu(s) ds
\label{g_cal1}
.
\end{align}
The last integral in (\ref{g_cal1}) is given by
\begin{align}
	\int_a^b \frac{1}{z-s} \mu(s) ds
 =
 	-\mu(s)\log(z-s)  \Big |_{a}^{b}
    		+
       		\int_a^b  \mu(s) ' \log(z-s) ds
 \label{g_cal2}
 .
\end{align}
On account of (\ref{mu}) and (\ref{ab}), a direct calculation gives
\begin{align}
	\int_a^b \mu(s) ds
	=
	1-\frac{2a}{c}
 \label{mu_ab}
\end{align}
and
\begin{align}
	\mu(x)'
	=
	-\frac {1}{2\pi} \frac{1-2x}
	{ (x-x^2) \sqrt{ (x-a)(b-x) } }
,
\quad \qquad  x\in(a,b).
 \label{mu_der}
\end{align}
Applying (\ref{mu_der}) to (\ref{g_cal2}), we obtain
\begin{align}
	\int_a^b \frac{1}{z-s} \mu(s) ds
 &=-\frac1c \log(z-b)
  	+\frac1c \log(z-a)
  -\frac{  1  }{ 2\pi   } \int_a^b   \frac{\log(z-s)}{ s \sqrt{ (s-a)(b-s) } }ds
 \hhh
		+\frac{  1  }{ 2\pi   }  \int_a^b   \frac{\log(z-s)}
		{ (1-s) \sqrt{ (s-a)(b-s) } }ds.
 \label{g_cal3}
\end{align}
Similarly, one can show that
\begin{align}
	\int_a^b s  \log(z-s) \mu(s) ' ds
 =
 	-\frac {1}{\pi} \int_a^b   \frac{\log(z-s)}
		{  \sqrt{ (s-a)(b-s) } }ds
		+\frac {1}{2\pi} \int_a^b   \frac{\log(z-s)}
		{ (1-s) \sqrt{ (s-a)(b-s) } }ds.
 \label{g_cal4}
\end{align}
A combination of  \r{g_cal1},  (\ref{mu_ab}),  (\ref{g_cal3}) and \r{g_cal4}  gives
\begin{align}
	\int_a^b  \log(z-s) \mu(s) ds
	&=
	\bigg[
	\frac1c (z-a)\log(z-a)-
 	\frac1c (z-b)\log(z-b)
	\bigg]
	-(1-\frac{2a}{c})
        		\notag
        		\\
        		&\quad
        		-\frac{  z  }{ 2\pi   } \int_a^b   \frac{\log(z-s)}
		{ s \sqrt{ (s-a)(b-s) } }ds
        		+\frac{  1  }{ 2\pi   } (z-1) \int_a^b   \frac{\log(z-s)}
		{ (1-s) \sqrt{ (s-a)(b-s) } }ds
        		\notag
        		\\
        		&\quad
        		+
        		\frac {1}{\pi} \int_a^b   \frac{\log(z-s)}
		{  \sqrt{ (s-a)(b-s) } }ds
.
 \label{g_cal_osc}
\end{align}
Let the integrals on the right-hand side of the equality be denoted by $I_1$,
$I_2$ and $I_3$, respectively. To evaluate $I_1$,  we note that from \cite{WongWong} we have
\begin{align}
	\int_a^b \frac{  \log s   }{  (s-z) \sqrt{ (s-a)(b-s) } }ds
	=\frac{  2\pi  }{  (z-a)^{1/2}(z-b)^{1/2}  }
	\log \frac{  z+\sqrt{ ab }+(z-a)^{1/2}(z-b)^{1/2}  }
	{ (\sqrt a +\sqrt b)z   }
 .
\label{wangintegral}
\end{align}
Making the change of variable $s=z-x$,  we obtain from (\ref{wangintegral})
\begin{align}
		I_1
	&=
		\int_a^b   \frac{\log(z-s)}
		{ s \sqrt{ (s-a)(b-s) } }ds
	\hh
		\int_{z-a}^{z-b}   \frac{\log x }
		{ (z-x)  \sqrt{ (z-x-a)(b-z+x) } }d(z-x)
	\hh
	-\frac{4\pi}{c} \log
		\frac{ z+\sqrt{ (z-a)(z-b) }+c/2  }
		{ (\sqrt{z-a}+\sqrt{z-b})z }
 \label{I1}
\end{align}
and
\begin{align}
	I_2
	&=
 \int_a^b   \frac{\log(z-s)}
		{ (1-s) \sqrt{ (s-a)(b-s) } }ds
	\hh
		-\frac{4\pi}{c} \log
		\frac{ z-1+\sqrt{ (z-a)(z-b) }-c/2  }
		{ (\sqrt{z-a}+\sqrt{z-b})(z-1) }.
	\label{I2}
\end{align}
Also, note that
\begin{align}
	\int_a^b \frac{  \log s   }{   \sqrt{ (s-a)(b-s) } }ds
	=2\pi \log \frac{   \sqrt{a}+\sqrt{b} }{ 2   }
;
\label{t17}
\end{align}
see \cite[ Lemma 1]{WongWong}. Hence,
\begin{align}
		I_3
	&=\int_a^b   \frac{\log(z-s)}
		{  \sqrt{ (s-a)(b-s) } }ds
	=
		2\pi \log\frac{ \sqrt{z-b}+\sqrt{z-a} }{2}
\label{I3}
 .
\end{align}
A straightforward calculation shows that
\begin{align}
	g(z)
&=
	-1-2\log 2
           +\frac1c (z-1)\log(z-1)-\frac1c z\log z
                         +(2-\frac2c)\log\left[ (z-a)^{1/2}+ ({z-b})^{1/2} \right]
            \notag
            \\
            &\ +
                \frac2c z
                   \log\left[ z+  (z-a)^{1/2}(z-b)^{1/2}+c/2  \right]+\frac2c (1-z)
                \log
        		\left[ z-1+ (z-a)^{1/2}(z-b)^{1/2}-c/2  \right]
.
\label{g}
\end{align}
Moreover, by using   \eqref{g_cal3}, \eqref{I1} and \eqref{I2},   we have
\begin{align}
	g'(z)&=\int_a^b \frac{\mu(s)}{z-s} ds
	-\frac1c \log(z-a)+\frac1c \log z-\frac1c \log(z-1)+\frac1c \log(z-b)
\hh
\frac2c \log\(
	\frac{  z+\sqrt{(z-a)(z-b)}+c/2  }{   (z-1)+\sqrt{(z-a)(z-b)}-c/2 }
\)  +\frac1c \log\(  \frac{z-1}{z} \).
	\label{g:der}
\end{align}
\\
\begin{proof}[Proof of Proposition \ref{pro_pt_phi}  ] Note that when $x\in(a,b)$, $\nu_\pm(x)=\pm \pi i (\mu(x)-\frac1c)$; see \r{nu_pm}. Hence, it follows that
\begin{align}
	\nu_+(x)-\nu_-(x)=2\pi i (\mu(x)-\frac1c)
	%\qquad x\in(a,b)
	.
 \label{nu_jum1}
\end{align}
On the other hand, if $x\in(-\infty,0) \cup (1,+\infty)$,
we have with the aid of (\ref{nu_pm1}) and (\ref{nu_pm2})
\begin{align}
	\nu_+(x)-\nu_-(x)=-\frac2c \pi i
	.
 \label{nu_jum2}
\end{align}

To establish (\ref{pt_phi}), we need one more function $\hat \phi(z)$
defined by
\begin{align}
	\hat \phi(z):= -g'(z) \mp \frac1c \pi i,
	\qquad z\in \cpm.
 \label{phat_def}
\end{align}
%for $z\in\mathbb{C}_\pm$. Differentiating (\ref{g_def}) gives
%\begin{align}
%	g'(z)=\int_0^1 \frac{1}{z-s} \mu(s)ds,
% \label{g_der_def}
%\end{align}
%which is analytic in $\mathbb{C}\setminus [0,1]$.
From  \eqref{g_def},  one can show that  for $x\in(0,1)$
%\begin{align}
%	g'_+(x)
%%	&=-\lim_{ \varepsilon\rightarrow 0}
%%	\int_0^1 \dfrac{(s-x)+i\varepsilon}
%%	{(s-x)^2+\varepsilon^2} \mu(s)ds
% &=
% 	-\mu(x)\pi i + \mbox{ P. V. } \int_0^1\frac{1}{x-s} \mu(s)ds
% 	\label{g_der_plu}
%\end{align}
%for $z\in(0,1)$, where P.V. denotes the Cauchy principal value.
%Similarly, one can show that
%\begin{align}
%	g'_-(x)
%	&=
% 	\mu(x)\pi i + \mbox{ P. V. } \int_0^1\frac{1}{x-s} \mu(s)ds
% 	\label{g_der_min}
%\end{align}
%for $z\in(0,1)$.
\begin{align}
	g_+'(x)-g_-'(x)=-2\pi i \mu(x).
 \label{g_der_plu}
\end{align}
From (\ref{g_der_plu}), it is easily verified that $\hat \phi(z)$ is analytic for $z\in\mathbb{C}\setminus (-\infty,0]\cup [a,b]\cup[1,+\infty)$ and satisfies the jump conditions
\begin{align}
	\hat \phi_+(x)-\hat \phi_-(x)
	=
	2\pi i (\mu(x)-\frac1c),
	\qquad x\in(a,b)
 \label{phat_jum1}
\end{align}
and
\begin{align}
	\hat \phi_+(x)-\hat \phi_-(x)
	=-\frac2c \pi i,
	\qquad x\in(-\infty,0) \cup (1,+\infty).
 \label{phat_jum2}
\end{align}
Comparing (\ref{phat_jum1}) and (\ref{phat_jum2}) with (\ref{nu_jum1})
and (\ref{nu_jum2}), we find
\begin{align}
	\nu_+(x)-\nu_-(x)=\hat \phi_+(x) -\hat \phi_-(x)
\end{align}
or, equivalently,
\begin{align}
	\nu_+(x)-\hat \phi_+(x)=\nu_-(x) -\hat \phi_-(x)
 \label{nu_ph}
\end{align}
for $x\in(-\infty,0)\cup(a,b)\cup(1,+\infty)$. On account of this,
$\nu(z)-\hat \phi(z)$ can be analytically extended to the whole complex plane.
Note that $\nu_+(x)=-\nu_-(x)$,  and  that from \eqref{g:der} it can be shown
\begin{align}
	g_+'(x)+g_-'(x)
&=\frac2c \log\( \frac{ x+xc }{  (1-x)(1+c) } \)
	+\frac2c \log \(  \frac{1-x}{x} \)
=
	0
 \qquad x\in(a,b)
\label{g:der:1}
\end{align}
and so
\begin{align}
	\hat \phi_+(x)=-\hat \phi_-(x),
\qquad x\in(a,b).
\label{ph:1}
\end{align}
Hence, from \r{nu_ph}, \r{nu_pm} and \r{ph:1} we have
\begin{align}
	\nu(x)-\hat \phi(x)
	&=\frac12\big[ (\nu-\hat\phi)_+(x)+(\nu-\hat\phi)_-(x)  \big]
 %\hh
 %	\frac12\big[ \nu_+(x)+\nu_-(x)-(\hat\phi_+(x)+\hat\phi_-(x))  \big]
 \hh
 	0,
 	\qquad x\in(a,b).
\label{nu:phih}
\end{align}
Using analytic continuation, it follows from (\ref{phat_def})
\begin{align}
	\nu(z)=-g'(z) \mp \frac1c \pi i,
	\qquad z\in\mathbb{C}_\pm.
 \label{nu_g_der}
\end{align}
Hence, it follows from  (\ref{pt_def}) that
\begin{align}
	\pt(z)
	=\int_a^z \bigg( -g'(s) \mp \frac1c \pi i \bigg) ds,
	\qquad z\in\mathbb{C}_\pm.
\end{align}
Now, let $z\in\mathbb{C}_+$;  on account of  \r{g_def} and the definition of $l$, we have
\begin{align}
	\pt(z)
 &=
	\int_a^z \bigg(- g'(s) - \frac1c \pi i \bigg) ds
 \hh
 	-g(z)+g_+(a)-\frac1c \pi i (z-a)
 \hh
 	l/2-g(z)+\pi i(1-\frac1c z ).
 \label{pt_ps_pro1}
\end{align}
In a similar manner, one can show that for $z\in\mathbb{C}_-$
\begin{align}
	\pt(z)=
 	l/2-g(z)- \pi i (1-\frac1c z).
 \label{pt_ps_pro2}
\end{align}
Formula
(\ref{pt_phi}) now  follows from (\ref{pt_ps_pro1}), (\ref{pt_ps_pro2}) and (\ref{phi_def}).
Formula (\ref{ps_phi}) is obtained by coupling   (\ref{pt_phi}) and (\ref{ps_pt}).
\end{proof}

We conclude this section with the mapping properties of $\phi(z)$ in (\ref{phi_def}). By using Proposition \ref{pro_pt_map} and (\ref{pt_phi}),
one can prove the following proposition; see  also Figure \ref{phi_fig}. (To establish the second equality in \r{phi_map_ab}, we make use of \r{ps_phi} instead of \r{pt_phi}.)

\begin{prop}
The mapping properties of $\phi(z)$ are given by:
\begin{align}
	\Re \phi_\pm (x) <  \Re \phi_\pm(0), \quad
	\Im \phi_\pm (x)=\mp\pi , \quad
	x\in(-\infty,0);
\label{phi_map_min}
\\
	\Re \phi_\pm(x)<0, \quad
	\Im \phi_\pm(x)= \mp(1-\frac1c x) \pi, \quad
	x\in[0,a);
\label{phi_map_0}
\\
	\phi_\pm (a)=\mp(1-\frac1c a) \pi i, \quad
	%\Re \pt_\pm(b)=0, \quad
	 \phi_\pm(b)=\mp \frac1c (1-b) \pi i ;
\label{phi_map_con}
\\
	\Re \phi_\pm(x)=0, \quad
	\arg \phi_\pm(x)=\mp \frac\pi 2, \quad
	x\in(a,b);
\label{phi_map_ab}
\\
	\Re \phi_\pm(x)<0, \quad
	\Im \phi_\pm(x)=\mp \frac1c(1-x)\pi, \quad
	x\in(b,1);
\label{phi_map_b}
\\
	\phi (x) <  \phi(1)<0, \quad
	x\in(1,+\infty).
\label{phi_map_1}
\end{align}
\label{pro_phi_map}
\end{prop}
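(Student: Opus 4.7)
The plan is to deduce each case of Proposition \ref{pro_phi_map} directly from the corresponding case of Proposition \ref{pro_pt_map} by rewriting the connection formula \eqref{pt_phi} as
\begin{align*}
\phi(z) = \pt(z) \mp \pi i\left(1 - \frac{z}{c}\right), \qquad z \in \mathbb{C}_\pm.
\end{align*}
Taking non-tangential boundary values on the real line gives the book-keeping identities
\begin{align*}
\Re \phi_\pm(x) = \Re \pt_\pm(x), \qquad \Im \phi_\pm(x) = \Im \pt_\pm(x) \mp \pi\left(1 - \frac{x}{c}\right),
\end{align*}
so each assertion of Proposition \ref{pro_phi_map} collapses to a one-line substitution from Proposition \ref{pro_pt_map}.

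I would then sweep through the intervals in order. On $(-\infty, 0)$, \eqref{pt_map_min} gives $\Im \pt_\pm(x) = \mp \pi x/c$, and the $x$-dependence cancels to leave $\Im \phi_\pm(x) = \mp \pi$; the real-part inequality \eqref{phi_map_min} follows from $\Re \pt_\pm(x) < \pt(0)$ together with $\phi_\pm(0) = \pt(0) \mp \pi i$, which yields $\Re \phi_\pm(0) = \pt(0)$. On $[0,a)$ and at the endpoints $x=a,b$, the claims are direct substitutions using $\pt(x)<0$, $\pt(a)=0$, and $\pt_\pm(b) = \pm(1-1/c)\pi i$. On $(b,1)$, combining \eqref{pt_map_b} with the algebraic simplification $\pm(1-1/c) \mp (1 - x/c) = \mp(1-x)/c$ yields \eqref{phi_map_b}. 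For $x \in (1, +\infty)$, the function $\phi$ is analytic across the real axis, so $\phi_+(x) = \phi_-(x)$; averaging the two boundary formulas kills the correction $\mp \pi i(1 - x/c)$ and leaves $\phi(x) = \frac{1}{2}(\pt_+(x)+\pt_-(x)) = \Re \pt_\pm(x)$, whence \eqref{phi_map_1} follows from \eqref{pt_map_1}.

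The hardest case, and the one flagged by the author's remark, is the argument statement $\arg \phi_\pm(x) = \mp \pi/2$ in \eqref{phi_map_ab}. The vanishing $\Re \phi_\pm(x) = 0$ still follows from $\Re \pt_\pm(x) = 0$, but the sign of $\Im \phi_\pm(x)$ is not transparent from \eqref{pt_phi}: the two summands $\Im \pt_\pm(x)$ and $\mp \pi(1 - x/c)$ need not have the same sign on all of $(a,b)$, for instance when $c<4/5$ and $x\in(c,b)$, where the factor $1-x/c$ changes sign. Here I would switch to \eqref{ps_phi}, writing $\phi(z) = \ps(z) \mp \pi i (1-z)/c$. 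Using \eqref{ps_def} and \eqref{nu_pm}, the boundary value $\ps_\pm(x) = \pm \pi i \int_x^b (1/c - \mu(s))\, ds$ is purely imaginary with sign consistent with $\arg \ps_\pm(x) = \pm \pi/2$, while $\pi(1-x)/c > 0$. The sign of $\Im \phi_+(x) = \Im \ps_+(x) - \pi(1-x)/c$ is then pinned down by the elementary estimate $\int_x^b (1/c - \mu(s))\, ds < (1-x)/c$, which is equivalent after rearrangement to the trivial bound $\int_x^b \mu(s)\, ds > -(1-b)/c$. This forces $\Im \phi_+(x) < 0$, hence $\arg \phi_+(x) = -\pi/2$, and complex conjugation disposes of $\phi_-(x)$.
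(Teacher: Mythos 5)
Your proposal is correct and follows exactly the route the paper indicates: deduce each case from Proposition \ref{pro_pt_map} via the connection formula \eqref{pt_phi}, switching to \eqref{ps_phi} only for the sign of $\Im\phi_\pm$ on $(a,b)$ as the paper's parenthetical remark suggests. The algebraic simplifications at the endpoints and the trivial bound $\int_x^b \mu(s)\,ds > -(1-b)/c$ used to pin down $\arg\phi_\pm$ on the band are both correct.
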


\begin{figure}[!htpb]
\begin{minipage}[t]{0.45\linewidth}
\centering
\includegraphics[scale=1]{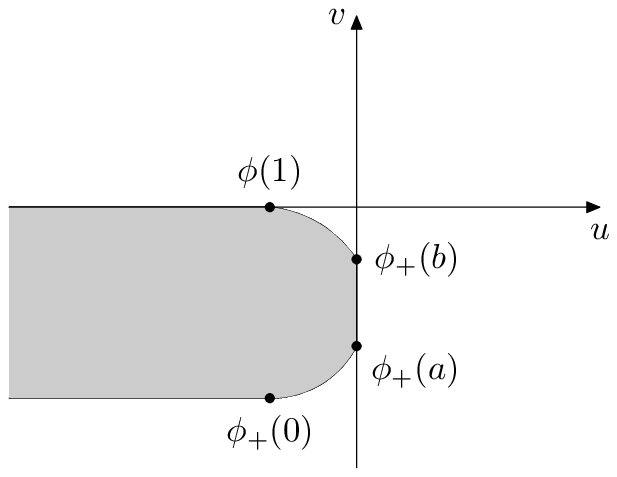}
\\
\medskip
(a) The upper half $z$-plane under $\phi(z)$
\end{minipage}
\hfill
\begin{minipage}[t]{0.45\linewidth}
\centering
\includegraphics[scale=1]{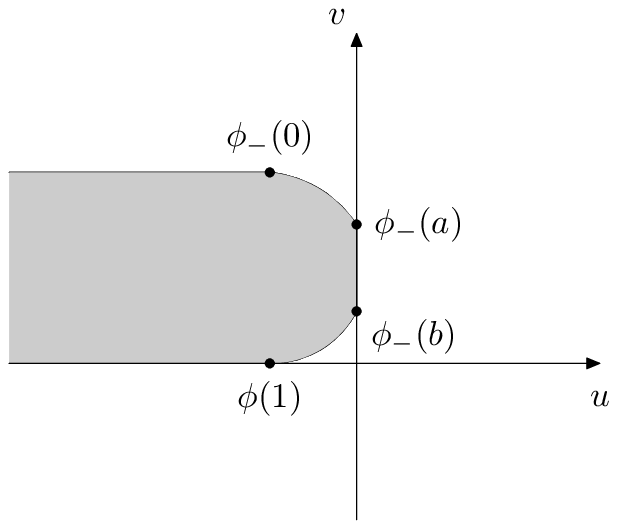}
\\
\medskip
(b) The lower half $z$-plane under $\phi(z)$
\end{minipage}
\caption{The  image of the $z$-plane  under the mapping  $\phi(z)$.}
\label{phi_fig}
\end{figure}

\section{Normalization and Contour Deformation}

Recall from \r{T} that
\begin{align}
  T(z)=(c_1e^{ nl })^{-\s_3/2} R(z)
	e^{-n\wt g(z)\s_3}
	  (c_1e^{ nl })^{\s_3/2},
\end{align}
where $l$ is the constant given in Definition 4.1 and $\wt g(z)$ is shown in \eqref{gt_def}. By using \r{R_real}, \r{R:jum:S}, \r{g_z} and \eqref{R_inf},  a  straightforward calculation shows that $T(z)$ is the unique solution of
the following RHP:
\begin{enumerate}
\item[($T_{a}$)]
$T(z)$ is analytic for $z\in \mathbb{C}\setminus\Sigma$;
\item[($T_{b}$)] for $z\in \Sigma_\pm$,
\begin{equation}
	 T_+(z)
=
    T_-(z)
    \(
    \begin{aligned}
    &\qquad\quad  1  && 0 \\
    \medskip
    &c_1\wt r_\pm(z)e^{n(l-2\gt(z))  } && 1
    \end{aligned}
    \)
    ,
 \label{T:jum1}
\end{equation}
and for $z\in(0,1)$,
\begin{equation}
    T_+(z)
=
    T_-(x)
    \begin{pmatrix}
    J_{11}(x) & J_{12}(x)\\
    J_{21}(x) & J_{22}(x)
    \end{pmatrix},
  \label{T:jum2}
\end{equation}
where
\begin{align*}
	J_{11}(x)&=e^{-n(\gt_+(x) -\gt_-(x)  )},
\\
	J_{22}(x)&=e^{n(\gt_+(x) -\gt_-(x)  )},
\\
	J_{21}(x)&=c_1r(x)e^{n(l-\gt_-(x)-\gt_+(x))  },
\end{align*}
and
\begin{align*}
	J_{12}(x)=0;
\end{align*}
\item[($T_{c}$)] $T(z)$ behaves like the identity matrix at infinity:
\begin{align*}
	T(z)=I+O(1/z)
	\qquad\qquad \mbox{as } z\ra \infty.
\end{align*}
\end{enumerate}
For the sake of simplicity, we introduce some auxiliary functions. Let
\begin{align}
	E(z)&:= \exp\(-N\int_0^1 \log(z-s) ds \)
	\prod_{j=0}^{N-1}(z-x_{N,j})
	\hh
	e^{-N\varphi(z) }
	\prod_{j=0}^{N-1}(z-x_{N,j})
 \label{E}
\end{align}
for $z\in \mathbb{C}\setminus[0,1]$ and
\begin{align}
	\wt E(z):=\frac{   E(z) e^{ \mp N\pi i z }  }{  2\cos(N\pi z)  }
 \label{Et}
\end{align}
for $z\in\mathbb{C}_\pm$, where $\varphi(z)$ is defined in \r{vp}. In view of the equation preceding  (4.5), it is easily verified that
\begin{align}
	E_+(x)/E_-(x)=e^{ N(\varphi_-(x)-\varphi_+(x)) }
	=e^{2N\pi ix},
	\label{E_pm}
	\qquad x\in(0,1).
\end{align}
On account of this,  it can be shown that $\et_+(x)\et_-(x)^{-1}=1$ for $x\in(0,1)$. Hence,
$\et(z)$ can be  analytically continued  to the interval  $(0,1)$.  Moreover,
\begin{align}
	\et (x)^2 = \frac{ E_+(x)E_-(x) }{ 4\cos^2(N\pi x) },
	\qquad\qquad x\in(0,1).
	\label{Et2}
\end{align}
As $N \rightarrow \infty$, by applying Stirling\rq{}s formula to
\eqref{E} and evaluating the integral in \r{vp} explicitly, one can show that
$
	E(z)\sim 1
$
uniformly for $z$ bounded away from the interval $[0,1]$.
Moreover, from \r{Et}, we have for $z\in\mathbb{C}_\pm$
\begin{align}
	E(z)/\wt E(z)=1+e^{\pm 2N\pi iz} \sim 1
 \label{E_asy}
\end{align}
uniformly for $z$ bounded away from the real line.

% By adopting the convention that $\s_3$ denotes the Pauli matrix
% \begin{align}
% 	\s_3=
% 	\begin{pmatrix}
% 	1 & 0 \\
% 	0 & -1
% 	\end{pmatrix},
% \end{align}
% we introduce the transformation
% \begin{equation}
% 	T(z):=(c_1e^{ nl })^{-\s_3/2} R(z)
% 	e^{-n\wt g(z)\s_3}
% 	  (c_1e^{ nl })^{\s_3/2}.
%  \label{T}
% \end{equation}
To compute the jump conditions of $T(z)$ in ($T_b$) above, we first state the following lemma.
\begin{lemma}
For $0<x<1$, we have
\begin{align}
 {r(x)}{  e^{ n (l-\wt g_+(x) - \wt g_-(x)) } } =\frac{
e^{n( \ps_+(x) +\ps_-(x)  ) }   }{ c_1 w(x)  \widetilde{E}(x)^2 },
 \label{lem_E1}
\end{align}
and
\begin{align}
	e^{n(\gt_+(x)-\gt_-(x)) }=\left\{
	\begin{array}{ll}
	1, \qquad & x\in(0,a)\cup (b,1),\\
	e^{2n\ps_-(x)  }=e^{-2n\ps_+(x) },\qquad  &  x\in(a,b).
	\end{array}
	\right.
 \label{gt:e}
\end{align}
For $z\in\Sigma_{\pm}$, we have
\begin{align}
{\widetilde{r}_\pm(z)}{e^{
n ( l-2\wt g(z) ) } } = \frac{ e^{ 2n\phi(z) }  }{ \mp c_1 w(z) \widetilde{E}(z) E(z) }.
 \label{lem_E2}
\end{align}
\label{lem_gt}
\end{lemma}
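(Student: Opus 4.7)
My strategy is to verify each of the three identities by direct substitution, after setting up three preparatory observations that let me interconvert the various auxiliary exponentials.

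\emph{Setup.} Since $c=n/N$ forces $N=n/c$, formula \eqref{gt_def} yields
\begin{align*}
	n\gt(z) = n g(z) - N\vp(z),
\end{align*}
so each $\gt$ can be exchanged for $g$ at the price of an $e^{\pm N\vp}$ prefactor. From \eqref{E} we read off $E(z)^2 = e^{-2N\vp(z)}\prod_{j=0}^{N-1}(z-x_{N,j})^2$, which will absorb the squared product that appears in both $r(x)$ and $\wt r_\pm(z)$. Finally, adding the $+$ and $-$ boundary values in \eqref{pt_phi} and \eqref{ps_phi} makes the imaginary offsets $\pm\pi i(1-z)/c$ cancel, so (with $\phi = l/2-g$)
\begin{align*}
	\ps_+(x)+\ps_-(x) = \phi_+(x)+\phi_-(x) = l - g_+(x) - g_-(x), \qquad x\in(0,1).
\end{align*}

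\emph{Proof of \eqref{gt:e}.} I read the jump $\gt_+-\gt_-$ off \eqref{g_jum2} on each subinterval. On $(0,a)$ it equals $2\pi i(1-1/c)$, so $n(\gt_+-\gt_-)=2\pi i(n-N)\in 2\pi i\mathbb Z$ and the exponential is $1$; on $(b,1)$ the jump is $0$. On $(a,b)$, combining \eqref{nu_pm} with \eqref{ps_def} gives $\ps_\pm(x)=\pm\pi i\int_b^x(\mu-1/c)\,ds$, so the jump $2\pi i\int_x^b(\mu-1/c)\,ds$ from \eqref{g_jum2} is precisely $2\ps_-(x)$; the second representation $-2\ps_+(x)$ follows from $\ps_+(x)=-\ps_-(x)$ on $(a,b)$, which is part of \eqref{ps_map_ab}.

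\emph{Proof of \eqref{lem_E1}.} Taking boundary values on $(0,1)$ in the identity $E(z)^2=e^{-2N\vp(z)}\prod_j(z-x_{N,j})^2$ and using \eqref{Et2}, I get $\prod_j(x-x_{N,j})^2 = 4\cos^2(N\pi x)\,e^{N(\vp_+(x)+\vp_-(x))}\wt E(x)^2$. Substituting this into \eqref{r} cancels the $4\cos^2(N\pi x)$. Now the setup identity gives $e^{n(l-\gt_+-\gt_-)} = e^{nl}e^{-n(g_++g_-)}e^{N(\vp_++\vp_-)}$; the $e^{N(\vp_++\vp_-)}$ cancels against the factor just produced, and the remaining $e^{nl-n(g_++g_-)}$ equals $e^{n(\ps_++\ps_-)}$ by the third setup identity, which is \eqref{lem_E1}.

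\emph{Proof of \eqref{lem_E2}.} For $z\in\Sigma_\pm$ (off the real axis), \eqref{Et} rearranges to $E(z)^2 = 2\wt E(z)E(z)\cos(N\pi z)e^{\pm N\pi iz}$; combining with $E(z)^2=e^{-2N\vp(z)}\prod_j(z-x_{N,j})^2$ yields $\prod_j(z-x_{N,j})^2 = 2\wt E(z)E(z)\cos(N\pi z)e^{\pm N\pi iz}e^{2N\vp(z)}$. Plugging into \eqref{rtilde} kills the $2\cos(N\pi z)$ and $e^{\pm N\pi iz}$ factors and leaves $\wt r_\pm(z)=\mp[c_1 w(z)\wt E(z)E(z)e^{2N\vp(z)}]^{-1}$. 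Multiplying by $e^{n(l-2\gt(z))}=e^{2n\phi(z)}e^{2N\vp(z)}$ (by $\phi=l/2-g$ and the first setup identity) cancels the $e^{2N\vp}$ and produces \eqref{lem_E2}. The main obstacle throughout is purely bookkeeping: tracking the five exponential prefactors $e^{nl}$, $e^{ng}$, $e^{N\vp}$, $e^{n\phi}$, $e^{n\ps}$ and the trigonometric factors $\cos(N\pi z)$, $e^{\pm N\pi iz}$ through the three substitutions, with no real analytic subtlety beyond the three setup identities.
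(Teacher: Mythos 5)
Your proof is correct and follows essentially the same route as the paper's: both rely on the conversion $n\wt g = ng - N\varphi$, the relation $E(z)^2 = e^{-2N\varphi(z)}\prod_j(z-x_{N,j})^2$ together with \eqref{Et}/\eqref{Et2}, and the sum formula $\phi^*_+ + \phi^*_- = \phi_+ + \phi_-$ coming from Proposition~\ref{pro_pt_phi}. The only difference is organizational — you state the three bookkeeping identities up front rather than interleaving them, and you spell out the integer-divisibility argument (that $n(\wt g_+-\wt g_-)=2\pi i(n-N)\in 2\pi i\mathbb{Z}$ on $(0,a)$) which the paper leaves implicit.
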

\begin{proof}
	For  $0<x<1$, it follows from  \r{r} and \r{gt_def} that
\begin{align}
 {r(x)}{  e^{ n (l-\wt g_+(x) - \wt g_-(x)) } }
&=\frac{4\cos^2(N\pi x )}{ c_1 w
e^{ -N(\varphi_+ +\varphi_-) } \prod_{j=0}^{N-1}(x-x_{N,j})^{2}}
e^{ n(l-g_+ -g_- ) }
\hh
	\frac{4\cos^2(N\pi x )}{ c_1 w
	E_+ E_- }e^{n( \ps_+ +\ps_- )}.
  \label{lem_t1}
\end{align}
The second equality is obtained by using \r{phi_def},  \r{ps_phi} and \r{E}.
Substituting \eqref{Et2} into \eqref{lem_t1} gives \eqref{lem_E1}. For $x\in(0,a)\cup(b,1)$, the formula $e^{n(\gt_+(x)-\gt_-(x)) }=1$ follows immediately from \eqref{g_jum2}. Note that $\nu_\pm(x)=\pm\pi i(\mu(x)-\frac1c)$ for $x\in(a,b)$;  hence, by \eqref{g_jum2} and \eqref{ps_def}  we obtain
\begin{align}
	e^{n(\gt_+(x)-\gt_-(x)) }
	=e^{ 2n \int_b^x \nu_-(s) ds }=
	e^{ 2n\ps_-(x) }=e^{-2n\ps_+(x)},
	\qquad x\in(a,b).
 \label{lem_t2}
\end{align}
For $z\in\Sigma_\pm$,
\eqref{lem_E2} follows directly from \eqref{phi_def}, \eqref{E} and \eqref{Et}.

\end{proof}
With the properties of  $\pt(z)$ and $\ps(z)$ established in Section 4
and Lemma \ref{lem_gt}, the jump matrices for $T$ in condition ($T_b$) can be written as follows:
\begin{equation}
T_+(x)=T_-(x)
\begin{pmatrix}
1 & 0\\
 {e^{2n\widetilde{\phi} }   }/({ w  \widetilde{E}^2 })  & 1
\end{pmatrix}\label{T_jum0}
\end{equation}
for $0<x<a$,
\begin{equation}
T_+(x)=T_-(x)
\begin{pmatrix}
e^{-2n\ps_- } & 0\\
{1 }/({ w  \widetilde{E}^2 })  & e^{-2n\ps_+ }
\end{pmatrix}\label{T_juma}
\end{equation}
for $a<x<b$,
\begin{equation}
T_+(x)=T_-(x)
\begin{pmatrix}
1 & 0\\
{e^{2n{\phi}^* } }/({ w  \widetilde{E}^2 })  & 1
\end{pmatrix}\label{T_jumb}
\end{equation}
for $b<x<1$, and
\begin{equation}
T_+(z)=T_-(z)
\begin{pmatrix}
1 & 0\\
{\mp e^{2n\phi }   }/({  w  E\widetilde{E} })  & 1
\end{pmatrix}\label{T_jums}
\end{equation}
for $z\in \Sigma_{\pm}$.

Note that for the jump matrix on $(a,b)$, we have the following factorization:
\begin{equation}
\begin{pmatrix}
e^{-2n\ps_-} & 0 \\\\  {1}/({w\et^2})  &   e^{-2n\ps_+}
\end{pmatrix}
=\begin{pmatrix}
\et & { w\et }/{e^{2n\ps_-} } \\\\
0 & 1/\et
\end{pmatrix}
\begin{pmatrix}
0 & -w \\
w^{-1} & 0
\end{pmatrix}
\begin{pmatrix}
1/\et & { w\et }/{e^{2n\ps_+} } \\\\
0 & \et
\end{pmatrix}.  \label{T_fac}
\end{equation}
The first and third matrices on the right-hand side of \eqref{T_fac} have,  respectively, the
analytic continuations
$
	\begin{pmatrix}
\et & { w\et }/{e^{2n\ps } } \\
0 & 1/\et
\end{pmatrix}
$
and
$
	\begin{pmatrix}
1/\et & { w\et }/{e^{2n\ps } } \\
0 & \et
\end{pmatrix}
$
on both sides of $(a,b)$.
Let $\Sigma_\pm=\bigcup_{i=1}^3 \Sigma^i_\pm$, and let
$\Sigma_V=\bigcup_{i=1}^3 \Sigma^i_\pm\cup \Sigma_\pm^{a}
\cup \Sigma_\pm^{b}
$
denote the contour shown in Figure \ref{T_fig}. Define
\begin{align}
	&V(z):=T(z)E(z)^{\s_3}, \qquad  \qquad\qquad \qquad \quad \  z\in \Omega_\infty;
 \label{V_T1}
\\
	&V(z):=T(z)\wt E(z)^{\s_3}, \qquad \qquad\qquad \qquad \quad \
	z\in \Omega_\pm^1\cup \Omega_\pm^3;
 \label{V_T2}
\\
	&V(z):=T(z)
	\begin{pmatrix}
\wt {E} &   { -w\wt {E} }/{e^{2n\ps } } \\
0 & 1/\wt {E}
\end{pmatrix},
\qquad
\quad\ \  z\in \Omega_+^2;
  \label{V_T3}
\\
&V(z):=T(z)
	\begin{pmatrix}
\wt {E} &  { w\wt {E} }/{e^{2n\ps } } \\
0 & 1/\wt {E}
\end{pmatrix},
\qquad
\qquad\  z\in \Omega_-^2,
  \label{V_T4}
\end{align}
		\begin{figure}[!b]
		\centering
		\includegraphics[scale=0.97]{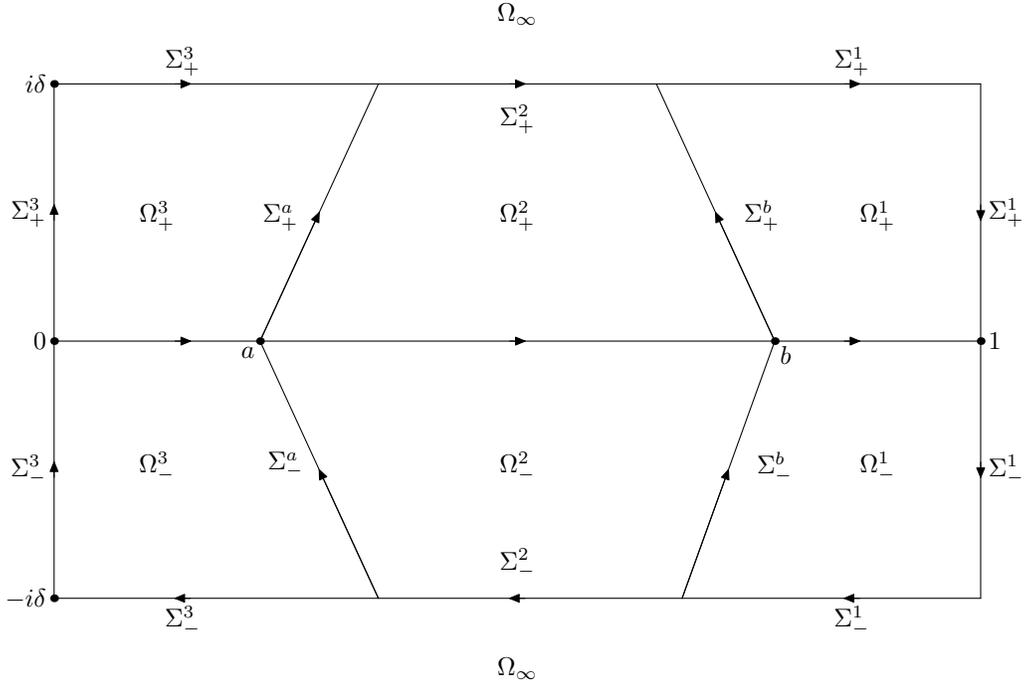}
		\caption{Contour $\Sigma_V$ and regions $\Omega_\pm^1, \Omega_\pm^2, \Omega_\pm^3, \Omega_\infty $. }
		\label{T_fig}
		\end{figure}where regions $\Omega_\pm=\bigcup_{i=1}^3 \Omega^i_\pm$ and $\Omega_\infty$ are also depicted in Figure \ref{T_fig}. It can be verified that the jump matrices of $V(z)$ are given by
\begin{align}
	J_V(x)&=
	\begin{pmatrix}
	0 & -w \\
	w^{-1} & 0
	\end{pmatrix},
 \qquad z\in(a,b);
 \label{JV:a}
 \\
 	J_V(x)&=
	\begin{pmatrix}
	1 & 0\\
	e^{2n\widetilde{\phi} }    w^{-1}    & 1
	\end{pmatrix},
 \qquad z\in(0,a);
 \label{JV:0}
 \\
 	J_V(x)&=
	\begin{pmatrix}
	1 & 0\\
	e^{2n {\phi}^* }  w^{-1}   & 1
	\end{pmatrix},
 \qquad z\in(b,1);
 \label{JV:b}
\end{align}
and
\begin{align}
J_V(x)&=
\begin{pmatrix}
E /\widetilde{E} & 0\\
	\mp w^{-1} e^{2n\phi }    &  \wt E/E
\end{pmatrix},
\qquad
z\in\Sigma_{\pm}^1 \cup \Sigma_{\pm}^3;
	\label{JV:S13}
\\
\notag
\\
J_V(z)&=
\begin{pmatrix}
1 & { \pm w\tilde{E} }/({e^{2n\ps}E })\\\\
\mp w^{-1} {e^{2n\phi }   } &  {\widetilde{E}}/{E}
\end{pmatrix},
\qquad
z\in\Sigma_{\pm}^2;
 \label{JV:S2}
\\
\notag
\\
J_V(z)
&=
\begin{pmatrix}
1 & { \pm w }/{e^{ 2n\ps} } \\
0 & 1
\end{pmatrix},
\qquad z\in\Sigma_{\pm}^a;
	\label{JV:Sa}
\\
\notag
\\
J_V(z)&=
\begin{pmatrix}
1 &  {\mp w }/{e^{ 2n\ps } } \\
0 & 1
\end{pmatrix},
\qquad z\in\Sigma_{\pm}^b.
 \label{JV:Sb}
\end{align}
In proving \r{JV:a}, \r{JV:0} and \r{JV:b}, we note that $\et$ is analytic in $0<x<1$. For \r{JV:a}, we also make use of \r{T_fac}. In establishing \r{JV:S2}, we have used \r{ps_phi} and \r{E_asy}. For
\r{JV:Sa} and \r{JV:Sb}, we note that $T_-(z)^{-1}T_+(z)=I$. Thus, $V(z)$ is a solution of the following RHP:
\begin{enumerate}
\item[($V_{a}$)]
$V(z)$ is analytic for $z\in \mathbb{C}\setminus \Sigma_V \cup [0,1]$;
\item[($V_b$)] for $z \in \Sigma_V$,
\begin{equation*}
    V_+(z)
=
    V_-(z)
    J_V(z);
\end{equation*}
\item[($V_c$)] as $z\rightarrow \infty$,
\begin{align*}
	V(z)=I+O(1/z).
\end{align*}
\end{enumerate}
For condition ($V_c$),  we note from \r{E} that $E(z) \sim 1$ as $z\ra \infty$.
From the properties of $\ps(z)$ and $\phi(z)$  (see  also Figure \ref{pt_fig} and Figure \ref{phi_fig}),  the  regions  $\Omega_\pm^2$ and $\Omega_\pm$ can be chosen sufficiently small so that
\begin{equation}
 \begin{array}{ll}
  \Re \ps(z) > 0
	& \mbox{for }
	z\in \Omega_\pm^2,
	\\
 	\Re \phi(z)<0
	& \mbox{for }
	z\in \Omega_\pm.
 \end{array}
 \label{phi_0}
\end{equation}
Moreover, from (\ref{E_asy}) it follows that $E/\et \sim 1$ as $n\ra\infty$ for $z$ bounded away from the real line.
These together with (\ref{pt_map_0}) and (\ref{ps_map_b}) imply
that the jump matrix $J_V (z)$ tends exponentially to the identity matrix as $n\rightarrow \infty$ for $z$ bounded away from $[a,b]\cup \{0\}\cup \{1\}$.
%Denote by $\wt V(z)$ the approximate Riemann-Hilbert Problem for $V(z)$.
%The limiting RHP $\wt V(z)$ can be divided into several local problem:
%\begin{align}
%	\wt V(z)=
%	\left\{
%	\begin{array}{ll}
%	V_\infty(z), &z\in\mathbb{C}\setminus [a,b], \\\\
%	V_0(z), & z\in
%	U(0,\varepsilon),   \\\\
%	V_1(z), & z\in U(1,\varepsilon),
%	\end{array}
%	\right.
%\end{align}
%where $U(0,\varepsilon):=\{ z\in\mathbb{C} : |z|<\varepsilon \}$
%and
%$U(1,\varepsilon):=\{ z\in\mathbb{C} : |z-1|<\varepsilon \}$.
%whose solutions can be constructed explicitly.
%Hence, we obtain the limiting RHP $ V_\infty(z)$ with the following properties:
%\begin{enumerate}
%\item[($V_{\infty.a}$)]
%$V_\infty(z)$ is analytic for $z\in \mathbb{C}\setminus[a,b]$;
%\item[($V_{\infty,b}$)] for $x\in (a,b)$,
%\begin{equation}
%    (V_\infty)_+(x)
%=
%    (V_\infty)_-(x)
%    \begin{pmatrix}
%    0 & -w(x)\\
%    w(x)^{-1} & 0
%    \end{pmatrix};
%  \label{Vi_jum}
%\end{equation}
%\item[($V_{\infty,c}$)] as $z\rightarrow \infty$,
%\begin{align*}
%	V_\infty(z)=I+O(1/z).
%\end{align*}
%\end{enumerate}
%On the other hand, since
%\begin{enumerate}
%\item[($V_{0,a}$)]
%$V_0(z)$ is analytic for $z\in \mathbb{C}\setminus(-i\varepsilon,i\varepsilon)$;
%\item[($V_{0,b}$)] for $z\in (-i\varepsilon,i\varepsilon)$
%\begin{equation*}
%    V_{0,+}(z)=V_{0,-}(z) ({  E(z)  }/{  \wt E(z)  })^{\s_3}
%\end{equation*}
%\item[($V_{0,c}$)]for $z_0\in \partial U(0,\varepsilon)$,  $V_0(z)$ satisfies the match condition as $z\rightarrow z_0$
%\begin{align*}
%	V_0(z_0)\sim V_\infty(z)+O(1/z).
%\end{align*}
%\end{enumerate}
On the other hand,
the weight function $w(z)$ in (\ref{w_def}) can be rewritten as
\begin{align}
  w(z)
  &=\a! \b!N^{-\a-\b} \rho(Nz-1/2;\a,\b,N-1)
 \hh
  \frac{ \Gamma(Nz+\a+1/2)\Gamma( N(1-z)+\b+1/2  ) }{  N^{\a+\b}\Gamma(Nz+1/2)\Gamma(N(1-z)+1/2)    }.
 \label{w_rep}
\end{align}
A direct calculation shows that
\begin{align}
  w(z) \sim   z^{\a} (1-z)^{\b}
  \qquad \quad  \mbox{as } N\ra \infty.
 \label{w_asy}
\end{align}
Therefore, it is natural to suggest that for large $n$, the solution
of the RHP for $ V(z) $ may behave asymptotically like the solution of the following RHP for $N(z)$:
\begin{enumerate}
\item[($N_{a}$)]
$N(z)$ is analytic for $z\in \mathbb{C}\setminus[a,b]$;
\item[($N_b$)] for $x\in (a,b)$,
\begin{equation}
    N_+(x)
=
    N_-(x)
    \begin{pmatrix}
    0 & -h(x)\\
    h(x)^{-1} & 0
    \end{pmatrix},
  \label{N_jum}
\end{equation}
where
\begin{align}
	h(x)=x^{\a}(1-x)^{\b};
 \label{h}
\end{align}
\item[($N_c$)] as $z\rightarrow \infty$,
\begin{align*}
	N(z)=I+O(1/z).
\end{align*}
\end{enumerate}

To solve the above RHP, we first consider the function
\begin{align}
	M(z):=
	\(
	\frac{  z+c/2+ (z-a)^{1/2}(z-b)^{1/2}  }{  z( \sqrt a +\sqrt b )  }
	\)^{\a}
	\(
	\frac{  1-z+c/2-(z-a)^{1/2}(z-b)^{1/2}    }{   (1-z) ( \sqrt a +\sqrt b )  }
	\)^{\b},
	\label{M}
\end{align}
where $(z-a)^{1/2}(z-b)^{1/2} $ is analytic in $\mathbb{C}\setminus[a,b]$ and
behaves like $z$ as $z\ra \infty$. In view of \eqref{ab}, it is easily shown that the function $M(z)$ is indeed  analytic in $\mathbb{C}\setminus[a,b]$ and
\begin{align}
	M_+(x)M_-(x)= x^{-\a} (1-x)^{-\b}
	\qquad \mbox{for } x\in(a,b).
 \label{M_jum}
\end{align}
Its limit at infinity is given by
\begin{align}
	M_\infty=\lim_{z\rightarrow \infty}M(z)=
	\( 2/(  \sqrt a+\sqrt b )   	  \)^{\a+\b}.
 \label{M_inf}
\end{align}
%where  $\sqrt{(z-a)(z-b)}$ is analytic in
%$\mathbb{C}\setminus[a,b]$.
Therefore, if we set
\begin{align}
	\wt N(z):=M_\infty^{\s_3} N(z)  M(z)^{-\s_3},
	\label{Nt_def}
\end{align}
it follows from  \eqref{N_jum}, \eqref{M_jum} and \eqref{M_inf}  that $\wt N(z)$ is a solution of the RHP:
\begin{enumerate}
\item[($\wt N_{a}$)]
$\wt N(z)$ is analytic for $z\in \mathbb{C}\setminus[a,b]$;
\item[($\wt N_b$)] for $x\in (a,b)$,
\begin{equation}
    \wt N_+(x)
=
   \wt N_-(x)
    \begin{pmatrix}
    0 & -1\\
    1 & 0
    \end{pmatrix};
  \label{Nt_jum}
\end{equation}
\item[($\wt N_c$)] as $z\rightarrow \infty$,
\begin{align*}
	\wt N(z)=I+O(1/z).
\end{align*}
\end{enumerate}
This problem can be solved explicitly, and its solution is given by
\begin{align}
  \wt N(z)=\frac12
  \begin{pmatrix}
	a(z)+a(z)^{-1} & i(a(z)-a(z)^{-1}) \\
	i(a(z)^{-1}-a(z))  &  a(z)+a(z)^{-1}
\end{pmatrix},
\label{Nt}
\end{align}
where
\begin{align}
	a(z)=\frac{(z-b)^{1/4}}{(z-a)^{1/4}}
\end{align}
with a branch cut along $[a,b]$ and $a(z)\rightarrow 1$ as $z\rightarrow \infty$.

Hence,
\begin{align}
	N(z)=\frac12 M_\infty^{-\s_3} \begin{pmatrix}
	a(z)+a(z)^{-1} & i(a(z)-a(z)^{-1}) \\
	i(a(z)^{-1}-a(z))  &  a(z)+a(z)^{-1}
\end{pmatrix}
  M(z)^{\s_3},
  \label{N}
\end{align}
and for $z\in\mathbb{C}\setminus\{0,1,a,b\}$, we expect that
\begin{align}
	V(z)\sim  N(z);
	%\mbox{for } z\in\mathbb{C}\setminus[a,b]\cup\{0\}\cup\{1\}.
 \label{V_asy}
\end{align}
see the comment following equation \r{w_asy}. However, since $E/\et\not\sim 1 $ for $z$ near 0 and 1  and since $e^{-2n\ps(z)} \not \sim 0$ for $z$ near $a$ and $b$, the jump matrix $J_V(z) \not \sim I$ near these critical points; see \r{JV:S13}-\r{JV:Sb}. That is, $V(z)$ and $N(z)$ are not uniformly close to each other near these points, and special attention must be paid to the neighborhoods of these points.

%Indeed, $V(z)$ and $N(z)$ are not uniformly closed to each other near the endpoints $0$ and $1$.
%%, since the jump matrices for $V$ do not converge uniformly to the identity matrix near the endpoints.
%Due to this fact, special attention must be paid to the neighborhood of the endpoints, which we will discuss first.

Let $U(0,\varepsilon):=\{ z\in\mathbb{C} : |z|<\varepsilon \}$ be a neighborhood  of the origin, where $\varepsilon$ is a small positive number.
In view of \eqref{Et}, \eqref{JV:S13}, \eqref{phi_0} and \eqref{V_asy}, we first wish to find a matrix-valued function $V_0(z)$ that satisfies the following RHP:
\begin{enumerate}
\item[($V_{0,a}$)]
$V_0(z)$ is analytic for $z\in U(0,\varepsilon)\setminus(-i\varepsilon,i\varepsilon)$;
\item[($V_{0,b}$)]
\begin{equation}
    (V_{0})_+(z)=(V_{0})_-(z)
    \left\{
    \begin{array}{ll}
    	\begin{pmatrix}
		1+e^{2N\pi iz } & 0 \\
		0 & \(1+e^{2N\pi iz }\)^{-1}
	\end{pmatrix} &  \mbox{for } z\in (0,+i\varepsilon), \\\\
	\begin{pmatrix}
		1+e^{-2N\pi iz } & 0 \\
		0 & (1+e^{-2N\pi iz })^{-1}
	\end{pmatrix} & \mbox{for } z\in (-i\varepsilon,0),
	\end{array}
	\right.
 \label{V0_jum}
\end{equation}
where the functions $(V_0)_+(z)$ and $(V_0)_-(z)$ denote the boundary values of $V_0(z)$ taken from the left and right of the imaginary axis, respectively;
\item[($V_{0,c}$)]for $z\in \partial U(0,\varepsilon)$,
\begin{align}
	V_0(z)\sim N(z)
 \label{V0_asy}
\end{align}
as $n\rightarrow \infty$.
\end{enumerate}

Next, we consider as in \cite{WangWong} the following $D$-function
\begin{align}
	D(z):=\frac{e^{Nz}\Gamma(Nz+1/2) }{ \sqrt{2\pi }  (Nz)^{Nz}  },
	\qquad \qquad
	z \in \mathbb{C}\setminus (-\infty,0\,].
 \label{D}
\end{align}
By applying Stirling\rq{}s formula to \eqref{D}, we have
\begin{align}
	D(z)=1+O(1/n)
	\qquad \quad \mbox{as } n\ra \infty,
 \label{D_asy}
\end{align}
for $|\arg z|<\pi$. We also introduce the function
\begin{align}
 D^*(z) := \left\{
 	\begin{array}{ll}
		(1+e^{2N\pi i z}) D(z), & z\in\mathbb{C}_+, \\
		(1+e^{-2N\pi i z}) D(z), & z\in\mathbb{C}_-,
	\end{array}
	\right.
 \label{Ds_D}
\end{align}
which is actually analytic in $(-\infty,0\,)$.
From  Euler\rq{}s reflection formula, it follows that
\begin{align}
	D^*(z)=\frac{ \sqrt{2\pi } e^{Nz} (-Nz)^{ -Nz }  }{ \Gamma(-Nz+1/2) },
	\qquad \qquad
	z\in \mathbb{C}\setminus [\,0,+\infty)
 \label{Ds}
 .
\end{align}
Similarly,   for $|\arg (-z)|<\pi $, we have
\begin{align}
	D^*(z) =1 + O(1/n)
 \label{Ds_asy}
\end{align}
as $n\rightarrow \infty$.

Set the following matrix-valued function
\begin{align}
	Q_0(z)= \left\{
	\begin{array}{ll}
	\begin{pmatrix}
	D(z)  &  0  \\
	0  &  D(z)^{-1}
	\end{pmatrix}=
	D(z)^{\s_3}, & \Re z>0, \\\\
	\begin{pmatrix}
	D^*(z)  &  0  \\
	0  &  D^*(z)^{-1}
	\end{pmatrix}=
	D^*(z)^{\s_3}, & \Re z<0.
	\end{array}
	\right.
\end{align}
It follows from \eqref{D_asy}, \eqref{Ds_D} and \eqref{Ds_asy} that $Q_0$ is a solution of the RHP:
\begin{enumerate}
\item[($Q_{0,a}$)]
$Q_0(z)$ is analytic for $z\in \mathbb{C}\setminus(-i\infty,i\infty)$;
\item[($Q_{0,b}$)]
\begin{equation}
    (Q_{0})_+(z)=(Q_{0})_-(z)
    \left\{
    \begin{array}{ll}
    	\begin{pmatrix}
		1+e^{2N\pi iz } & 0 \\
		0 & (1+e^{2N\pi iz })^{-1}
	\end{pmatrix} &  \mbox{for } z\in(0,+i\infty ), \\\\
	\begin{pmatrix}
		1+e^{-2N\pi iz } & 0 \\
		0 & (1+e^{-2N\pi iz })^{-1}
	\end{pmatrix} & \mbox{for } z\in(-i\infty,0),
	\end{array}
	\right.
 \label{Q0_jum}
\end{equation}
where the functions $(Q_0)_+(z)$ and $(Q_0)_-(z)$ denote the boundary values of $Q_0(z)$ taken from the left and right of the imaginary axis, respectively;
\item[($Q_{0,c}$)] for $z$ bounded away from the origin,
\begin{align}
	Q_0(z)\sim I
 \label{Q0_asy}
\end{align}
as $n\ra \infty$.
\end{enumerate}
A comparison of conditions $(Q_{0,b})$ and $(Q_{0,c})$ with \eqref{V0_jum} and
\eqref{V0_asy} shows that a solution of the RHP for $V_0$ is given by
\begin{align}
 V_0(z)=N(z)Q_0(z)= \left\{
    \begin{array}{ll}
      N(z) D(z)^{\s_3}, & \Re z>0, \\
      N(z) D^*(z)^{\s_3}, & \Re z<0.
    \end{array}
  \right.
 \label{V0}
\end{align}

In a similar manner, for $z\in U(1,\varepsilon)=\{ z\in\mathbb{C} : |z-1|<\varepsilon \}$, we want to find a matrix-valued function $V_1(z)$ that satisfies the RHP:
\begin{enumerate}
\item[($V_{1,a}$)]
$V_1(z)$ is analytic for $z\in U(1,\varepsilon)\setminus(1-i\varepsilon,1+i\varepsilon)$;
\item[($V_{1,b}$)]
\begin{equation}
    (V_{1})_+(z)=(V_{1})_-(z)
    \left\{
    \begin{array}{ll}
    	\begin{pmatrix}
		1+e^{2N\pi iz } & 0 \\
		0 & (1+e^{2N\pi iz })^{-1}
	\end{pmatrix} &  \mbox{for } z\in  (1,1+i\varepsilon), \\\\
	\begin{pmatrix}
		1+e^{-2N\pi iz } & 0 \\
		0 & (1+e^{-2N\pi iz })^{-1}
	\end{pmatrix} & \mbox{for } z\in  (1-i\varepsilon,1),
	\end{array}
	\right.
 \label{V1_jum}
\end{equation}
where the functions $(V_1)_+(z)$ and $(V_1)_-(z)$ denote the boundary values of $V_1(z)$ taken from the right and left of the line $\Re z=1$, respectively; also see Figure \ref{T_fig};
\item[($V_{1,c}$)]for $z\in \partial U(1,\varepsilon)$,
\begin{align}
	V_1(z)\sim N(z)
 \label{V1_asy}
\end{align}
as $n\rightarrow \infty$.
\end{enumerate}
As in the case of $V_0$, it can be verified that a solution of the RHP for $V_1$ is given by
\begin{align}
 V_1(z)=\left\{
    \begin{array}{ll}
      N(z) D^*(1-z)^{\s_3}, & \Re z>1, \\
      N(z) D(1-z)^{\s_3}, & \Re z<1.
    \end{array}
  \right.
 \label{V1}
\end{align}

Since $V_0(z)$ satisfies the same jump conditions as $V(z)$ on the contour $\partial U(0,\varepsilon)\cup (0,+i\varepsilon) \cup (-i\varepsilon,0)$, $V(z)\sim V_0(z)$ for $z\in U(0,\varepsilon)$. Similarly, $V(z)\sim  V_1(z)$ for $z\in U(1,\varepsilon)$.
Hence, as $n\ra \infty$, we have from \eqref{V_asy}
\begin{align}
  V(z)\sim \left\{
  \begin{array}{ll}
   N(z),  & z\in (\Omega_\infty \cup \Omega_\pm^1 \cup \Omega_\pm^3) \setminus \( U(0,\varepsilon)\cup
 U(1,\varepsilon) \), \\
  V_0(z), & z\in U(0,\varepsilon),\\
  V_1(z), & z\in U(1,\varepsilon).
  \end{array}
  \right.
\label{V_asyr}
\end{align}
By \eqref{V_T1}-\eqref{V_T2}, it is clear that
\begin{align}
  T(z)= \left\{
  \begin{array}{ll}
   V(z)E(z)^{-\s_3},  & z\in \Omega_\infty, \\
    V(z)\et(z)^{-\s_3}, & z\in \Omega_\pm^1\cup \Omega_\pm^3.
  \end{array}
  \right.
 \label{T_V}
\end{align}

Let $x_0$ be an arbitrary fixed point in the interval $(a,b)$. From
\eqref{V0}, \eqref{V_asyr} and \eqref{T_V}, it follows that for $\Re z < x_0$,
\begin{align}
  T(z)\sim \left\{
  \begin{array}{ll}
   N(z)E(z)^{-\s_3},  & z\in \Omega_\infty\setminus U(0,\varepsilon), \\
  N(z) \et(z)^{-\s_3}, & z\in \Omega_\pm^3 \setminus U(0,\varepsilon),\\
    N(z)D^*(z)^{\s_3} E(z)^{-\s_3}, & z\in U(0,\varepsilon)\cap \Omega_\infty,\\
  N(z)D(z)^{\s_3} \et(z)^{-\s_3}, & z\in U(0,\varepsilon)\cap \Omega_\pm^3,\\
  \end{array}
  \right.
 \label{T_asy_Il}
\end{align}
and by \eqref{D_asy} and \eqref{Ds_asy} we obtain
\begin{align}
  T(z)\sim \left\{
  \begin{array}{ll}
   N(z)D^*(z)^{\s_3}E(z)^{-\s_3},  & z\in \Omega_\infty\setminus U(0,\varepsilon), \\
  N(z)D(z)^{\s_3} \et(z)^{-\s_3}, & z\in \Omega_\pm^3 \setminus U(0,\varepsilon),\\
    N(z)D^*(z)^{\s_3} E(z)^{-\s_3}, & z\in U(0,\varepsilon)\cap \Omega_\infty,\\
  N(z)D(z)^{\s_3} \et(z)^{-\s_3}, & z\in U(0,\varepsilon)\cap \Omega_\pm^3,\\
  \end{array}
  \right.
 \label{T_asym}
\end{align}
for $\Re z < x_0$.
From the definitions of $\et(z)$ and $D^*(z)$ in (\ref{Et}) and (\ref{Ds_D}), respectively, it is easily shown that
\begin{align}
	D^*(z)\et(z)=D(z)E(z).
 \label{Ds_et}
\end{align}
Hence, it follows that for $\Re z<x_0$,
\begin{align}
  T(z)\sim N(z) D^*(z)^{\s_3} E(z)^{-\s_3},
  \qquad z\in \Omega_\infty \cup \Omega_\pm^3
  .
  	\label{T:asy:lx0}
\end{align}
Similarly, for $\Re z>x_0$, we  have by using \r{V1}, \r{V_asyr} and \r{T_V}
\begin{align}
  T(z)\sim N(z) D^*(1-z)^{\s_3} E(z)^{-\s_3},
 \qquad z \in \Omega_\infty\cup\Omega_\pm^1,
 	\label{T:asy:gx0}
\end{align}
where we have also made use of the identity $D^*(1-z)\et(z)=D(1-z)E(z)$ obtained from \r{Et}
and \r{Ds_D}.
Define
\begin{align}
  \wt D(z):=
  \left\{
  \begin{array}{ll}
     D^*(z), & \Re z<x_0, \\
     D^*(1-z), & \Re z>x_0. \\
  \end{array}
\right.
	\label{Dt}
\end{align}
On account of \eqref{gt_def}, \eqref{T} and \eqref{E}, one can work backwards to get
\begin{align}
	R(z) &\sim
	(c_1 e^{nl} )^{\s_3/2}  N(z)
   \wt D(z)^{\s_3}e^{ n\wt g(z)\s_3 }E(z)^{ -\s_3}(c_1 e^{nl})^{- \s_3/2}
  \hh
	(c_1 e^{nl} )^{\s_3/2}  N(z)
	e^{-n\phi(z) \sigma_3}
   \bigg[
   \frac{  \wt D(z)  }{ c_1^{1/2} \prod_{j=0}^{N-1}(z-x_{N,j} )   }
   \bigg]^{\s_3}
 \label{R_asy}
\end{align}
for $z\in\Omega_\infty\cup \Omega_\pm^1\cup \Omega_\pm^3$, where $l$ is given in Definition 4.1.

\section{Construction of the Parametrices}
In this section,  we will construct an approximation $\wt R(z)$ to the solution of the RHP for $R(z)$  stated in Section 3 for large $n$.
%Now, we come to the determination of the regions.
Given $x_1\in(0,a)$, define the rectangle $K = \{z \in\mathbb{C} :\ x_1 < \Re z <1-x_1,\ |\Im z| < \delta \}$, and the line $\Gamma_0=\{z\in\mathbb{C}:\Re z=x_0, |\Im z| < \delta \}$, where $x_0$ is given in Section 5.
We divide the
complex plane into three regions I, II, III  by using $\Gamma_0$ and $K$; see Figure \ref{asy_fig}.
\begin{figure}[tbp]
\begin{center}
\includegraphics[scale=1]{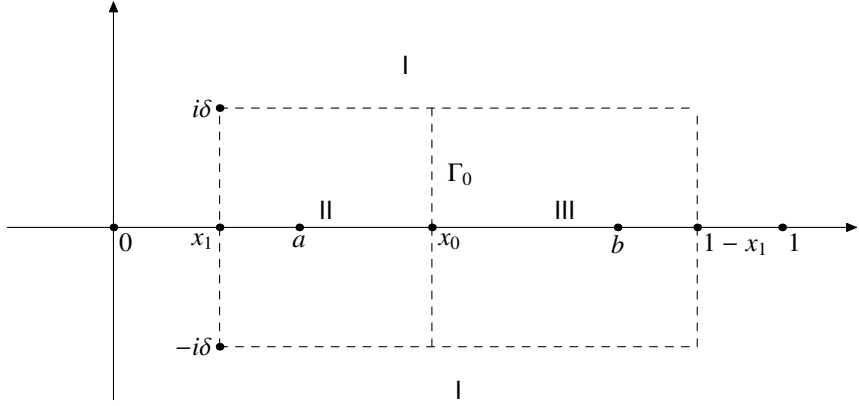}
\caption{The regions I, II, III.}\label{asy_fig}
\end{center}
\end{figure}
%This invokes us to construct the global paramatrix directly by using the parametrix
%$N(z)$ without a series of transformations  $R\rightarrow T\rightarrow V$.

From  \eqref{R_asy}, it is natural to set
\begin{align}
	\wt R(z) &=
	(c_1 e^{nl} )^{\s_3/2}  N(z)
	e^{-n\phi(z) \sigma_3}
   \bigg[
   \frac{  \wt D(z)  }{ {c_1}^{1/2} \prod_{j=0}^{N-1}(z-x_{N,j} )  }
   \bigg]^{\s_3}
   \label{Rt:I}
\end{align}
for $z\in$ I.

%\subsection{Parametrix in the neighborhood of the interval $[a,b]$}

For $z$ inside the rectangle $K$, we will construct the parametrices by
using Airy functions as in \cite{DaiWong, WangWong}.
For $z\in\mbox{II}$, it follows from  (\ref{Ds_D}), \eqref{Dt} and (\ref{R_asy})
\begin{align}
	R(z) \sim (c_1 e^{nl} )^{\s_3/2}  N(z)
	e^{  -n\phi(z)  \sigma_3}e^{\pm N\pi i z \s_3}
   \bigg[
   \frac{  2\cos(N\pi z) D(z)  }{ {c_1}^{1/2} \prod_{j=0}^{N-1}(z-x_{N,j} )   }
   \bigg]^{\s_3},
   \qquad z\in \mathbb{C}_\pm,
   \label{R_asy_pt1}
\end{align}
and from (\ref{pt_phi}) and (\ref{D_asy}) we obtain
\begin{align}
	R(z) \sim
	(-1)^n
	(c_1 e^{nl} )^{\s_3/2}  N(z)
	e^{  -n\pt(z)  \sigma_3}
   \bigg[
   \frac{  2\cos(N\pi z)  }{ {c_1}^{1/2} \prod_{j=0}^{N-1}(z-x_{N,j} )   }
   \bigg]^{\s_3}.
  %\qquad \mbox{for } z\in \mbox{II}.
  \label{R_asy_pt}
\end{align}
Using  \eqref{nu} and \eqref{pt_def}, one can derive an expansion for $\pt(z)$ for $z\in U(a,\varepsilon):=\{z\in \mathbb{C}:|z-a|<\varepsilon \}$.
Indeed, we have
\begin{align}
 \pt(z)=-\frac{8}{3 c^2}(b-a)^{1/2} (a-z)^{3/2} + O(\varepsilon^2)
 \label{pt:a:exp}
\end{align}
for $z\in U(a,\varepsilon)$.
%Coupling this and \eqref{pt_map_con}, one can
%show that $[\pt(z)]^{2/3}$ is analytic in $[0,a]$.
By using (4.29) and the proposition of $\pt$ (see Figure \ref{pt_fig}),  it can be verified that the function defined by
\begin{align}
	\ft(z)=\( -\frac32  \pt(z) \)^{2/3}
	\label{ft_def}
\end{align}
satisfies
$$
	\ft_-(x)^{-1}\ft_+(x)=1,
	\qquad \qquad x\in(a,b).
$$
On account of this and \r{pt:a:exp}, we conclude that this function
is analytic in $\mathbb{C}\setminus (-\infty, 0]\cup[b,+\infty)$.
In particular, one can see that for $z\in\mbox{II} \cap \mathbb{C}_+$
\begin{align}
	-\pi<\arg \ft(z) <0,
	\label{ft_arg_upp}
\end{align}
%Also, for $z\neq a$, $n^{2/3}\ft(z)\rightarrow \infty$ as $n\rightarrow \infty$.
%Moreover,
and  we obtain from \eqref{pt_map_ab}
\begin{align}
	\ft_+(x)^{1/4}\ft_-(x)^{-1/4}=e^{-\pi i/2},
	\qquad x\in(a,b)
	.
 \label{ft_jum}
\end{align}
Note that $N(z)=M_\infty^{-\s_3}\wt N(z)M(z)^{\s_3} $ by \r{Nt} and \r{N}, and $\wt N(z)$ has the factorization
\begin{align}
	\wt N(z)&= \frac12
	\begin{pmatrix}
	1  &  i  \\
	i  &  1
	\end{pmatrix}
	a(z)^{-\s_3}
	\begin{pmatrix}
	1  &  -i  \\
	-i  &  1
	\end{pmatrix}
 \hh
 	\frac12
	\begin{pmatrix}
	1  &  -i  \\
	-i  &  1
	\end{pmatrix}
	a(z)^{\s_3}
	\begin{pmatrix}
	1  &  i  \\
	i  &  1
	\end{pmatrix}.
	\label{Nt_fac}
\end{align}
Since $a(z)={ (z-b)^{1/4} }/{(z-a)^{1/4}}$,
$N(z)$ has fourth-root singularities at $z=a$ and $z=b$.
In order to remove the singularity at $z=a$,
we introduce the function
\begin{align}
	\wt E_n(z)=  N(z) h(z)^{\s_3/2}
	\begin{pmatrix}
	1 &  i \\
	i &  1
\end{pmatrix}
	[n^{2/3} \ft(z)]^{-\s_3/4}
.
	\label{Etn}
\end{align}
%\begin{prop}
%	The matrix-valued function $\wt E_n(z)$ is analytic for $z\in \mathbb{C}\setminus (-\infty,0]\cup [b,+\infty)$.
%\end{prop}
%\begin{proof}
Note that by using \r{ab}, we have from \r{M}
$$
	\lim_{z\ra a} M(z) = a^{-\a/2} b^{-\b/2}.
$$
Also, from \r{h} we have
$$
	\lim_{z\ra a} h(z) = a^{\a} b^{\b}.
$$
Thus,
$$
	M(z)^{\s_3} h(z)^{\s_3/2} \ra I
	\qquad \qquad \mbox{as } z\ra a.
$$
Coupling \r{Etn} and \r{Nt_def}, we obtain
\begin{align}
	\et_n(z) \sim
	 M_\infty^{-\s_3}
	\begin{pmatrix}
	1  &  i  \\
	i  &  1
	\end{pmatrix}
	\( n^{1/6} a(z) \ft(z)^{1/4} \)^{-\s_3}
	\qquad \qquad
	\mbox{as } z\ra a.
 \label{Etn:ten}
\end{align}
Since $\ft(z)$ has a simple zero at $z=a$ and $a(z)$ has a fourth-root singularity at $a$, the last formula shows that $\et_n(z)$ has a removable singularity at $z=a$.

Furthermore, one can show that $\et_n(z)$ has no jump across the interval $(a,b)$. Indeed, by \eqref{N_jum}
\begin{align}
	N_+(x) h(x)^{\s_3/2}
 &=N_-(x)
	\begin{pmatrix}
	0  &  -h(x)  \\
	h(x)^{-1}  &  0
	\end{pmatrix}
	h(x)^{\s_3/2}
 \hh
 	N_-(x)
 	h(x)^{\s_3/2}
 	\begin{pmatrix}
	0  &  -1  \\
	1  &  0
	\end{pmatrix},
	\qquad x\in(a,b)
	.
 \label{N:jum:Etn}
\end{align}
From \eqref{ft_jum}, \eqref{Etn} and \eqref{N:jum:Etn}, it follows that for $x\in(a,b)$
\begin{align}
&\qquad	[ ( \wt E_n )_- (x) ]^{-1} ( \wt E_n )_+ (x)
\hh	\frac12
	[n^{2/3}\ft_-(x)]^{\s_3/4}
	\begin{pmatrix}
	1 &  -i \\
	-i &  1
\end{pmatrix}
	\begin{pmatrix}
	0 &  -1 \\
	1 &  0
\end{pmatrix}
	\begin{pmatrix}
	1 &  i \\
	i &  1
\end{pmatrix}
	[n^{2/3}\ft_+(x)]^{-\s_3/4}
 \hh
	[n^{2/3}\ft_-(x)]^{\s_3/4}
	\begin{pmatrix}
	-i &  0 \\
	0 &  i
\end{pmatrix}
	[n^{2/3}\ft_+(x)]^{-\s_3/4}
=
 	I.
 	\label{E0t}
\end{align}
Therefore, $\et_n(z)$ is analytic across $(a,b)$. Also, note that from \r{N}, \r{h} and \r{ft_def},
it is evident that  $N(z)$, $h(z)$ and $\ft(z)$ are analytic for $z\in(0,a)$.
Thus, the matrix-valued function $\wt E_n(z)$ is analytic
in $\mathbb{C}\setminus (-\infty,0]\cup [b,+\infty)$.
%\end{proof}

By virtue of  \eqref{Etn}, we can rewrite (\ref{R_asy_pt}) as
\begin{align}
	R(z) &\sim
	(-1)^n (c_1 e^{nl} )^{\s_3/2} \et_n(z)\et_n(z)^{-1}
	N(z) e^{ -n\pt(z)\s_3 }
	   \left[
   \frac{  2\cos(N\pi z)  }{ {c_1} ^{1/2} \prod_{j=0}^{N-1}(z-x_{N,j} )   }
   \right]^{\s_3}
	\notag
	\\
	&\sim
 	\frac{(-1)^n}{2}
 	(c_1 e^{nl} )^{\s_3/2}
	\wt E_n(z)
	[n^{2/3} \ft(z)]^{\s_3/4}
	\begin{pmatrix}
	e^{  -n\pt(z)  } &  -ie^{  n\pt(z) } \\
	-ie^{  -n\pt(z)  } & e^{  n\pt(z) }
\end{pmatrix}
   \left[
   \frac{  2\cos(N\pi z)  }{ (c_1 h(z))^{\frac12} \prod_{j=0}^{N-1}(z-x_{N,j} )   }
   \right]^{\s_3}.
  \label{R_asy_ft1}
\end{align}
To find an approximation to $R(z)$, we first look for a matrix which is asymptotic to
\begin{align}
	[n^{2/3} \ft(z)]^{\s_3/4}\begin{pmatrix}
	e^{  -n\pt(z)  } &  -ie^{  n\pt(z) } \\
	-ie^{  -n\pt(z)  } & e^{  n\pt(z) }
\end{pmatrix}.
\end{align}
Set
\begin{align}
	\wt \xi :=n^{2/3}\ft(z)=\( -\frac32n  \pt(z) \)^{2/3}
	.
 \label{xt}
\end{align}
From the asymptotic expansions of the Airy function \cite[p.198]{Handbook}, we have
\begin{equation}
\begin{aligned}
	&\Ai(   \wt \xi  ) \sim \frac{\xt^{-1/4}}{ 2\sqrt{\pi} }
	e^{ -\frac23  \wt \xi^{3/2}  }
	=
	\frac{1}{ 2\sqrt{\pi} }
	[n^{2/3} \ft(z)]^{-1/4 } e^{  n \pt(z) },
\\
 	&\Ai'(   \wt \xi ) \sim -\frac{\xt^{1/4}}{ 2\sqrt{\pi} }
	 e^{ -\frac23  \wt \xi^{3/2}  }
	=
	-\frac{1}{ 2\sqrt{\pi} }
	[n^{2/3}\ft(z)]^{1/4 } e^{  n \pt(z) },
\\
 	&\Ai(   \om \wt \xi )
 	\sim
 	\frac{ e^{-i\pi/6 } }{ 2\sqrt{\pi} }
	\wt \xi^{-1/4 } e^{  \frac23  \wt\xi^{3/2} }
 	=
	\frac{ e^{-i\pi/6 } }{ 2\sqrt{\pi} }[n^{2/3}\ft(z)]^{-1/4}
	 e^{  -n \pt(z) },
\\
 	&\Ai'(   \om \wt \xi )
 	\sim
 	-\frac{ e^{i\pi/6 } }{ 2\sqrt{\pi} }
	\wt \xi^{1/4 } e^{  \frac23  \wt\xi^{3/2} }
 	=
	-\frac{ e^{i\pi/6 } }{ 2\sqrt{\pi} }
	[n^{2/3}\ft(z)]^{1/4 } e^{  -n \pt(z) },
\end{aligned}
 \label{Ai:1}
\end{equation}
where $\om=e^{2\pi i /3}$. It is immediate that for $z\in\mbox{II} \cap \mathbb{C}_+$,
\begin{align}
	[n^{2/3} \ft(z)]^{\s_3/4}\begin{pmatrix}
	e^{  -n\pt(z)  } &  -ie^{  n\pt(z) } \\
	-ie^{  -n\pt(z)  } & e^{  n\pt(z) }
\end{pmatrix}
\sim
	2\sqrt \pi
	\begin{pmatrix}
	-i\om^2 \Ai'( \om\wt \xi) &  i\Ai'( \wt \xi) \\
	-\om\Ai( \om \wt \xi) &  \Ai( \wt \xi)
\end{pmatrix}.
 \label{R_mat_2_plu}
\end{align}
Similarly, for $z\in\mbox{II}\cap \mathbb{C}_-$, we also have $0<\arg\ft(z)<\pi$
and
\begin{align}
	[n^{2/3} \ft(z)]^{\s_3/4}\begin{pmatrix}
	e^{  -n\pt(z)  } &  -ie^{  n\pt(z) } \\
	-ie^{  -n\pt(z)  } & e^{  n\pt(z) }
\end{pmatrix}
\sim
	2\sqrt \pi
	\begin{pmatrix}
	i\om \Ai'( \om^2\wt \xi) &  i\Ai'( \wt \xi) \\
	\om^2\Ai( \om^2\wt \xi) &  \Ai( \wt \xi)
\end{pmatrix}.
 \label{R_mat_2_min}
\end{align}

For $z\in\mbox{III}$, there is a result corresponding to (\ref{R_asy_pt}) with $\pt(z)$ replaced
by $\ps(z)$. Indeed, it follows from (\ref{Dt}), (\ref{R_asy}) and (\ref{ps_phi}) that
\begin{align}
	R(z) \sim
	(-1)^N
	(c_1 e^{nl} )^{\s_3/2}  N(z)
	e^{  -n\ps(z)  \sigma_3}
   \bigg[
   \frac{  2\cos(N\pi z)  }{ {c_1}^{1/2} \prod_{j=0}^{N-1}(z-x_{N,j} )   }
   \bigg]^{\s_3}.
  \label{R_asy_ps}
\end{align}
Let
\begin{align}
	\fs(z) =\( -\frac32 \ps(z)  \)^{2/3},
	\label{fs_def}
\end{align}
which is analytic in $\mathbb{C}\setminus (-\infty,a]\cup [1,+\infty)$.
By Proposition 4.3, we see that for $x\in(a,b)$
\begin{align}
	\fs_+(x)^{1/4}\fs_-(x)^{-1/4}=e^{\pi i/2}
	%\qquad x\in(a,b)
	.
 \label{fs_jum}
\end{align}
Since $N(z)$  has a fourth-root singularity at $z=b$,  we define
\begin{align}
	 E^*_n(z)=  N(z) h(z)^{\s_3/2}
	\begin{pmatrix}
	1 & - i \\
	-i &  1
\end{pmatrix}
	[ n^{2/3}\fs(z)]^{-\s_3/4}
.
	\label{Esn}
\end{align}
As in the case of $\et_n(z)$,
it is readily seen that  $b$ is a removable singularity of $E_n^*(z)$. By (6.11) and  (\ref{fs_jum}), one also has
\begin{align}
	[ (  E_n^* )_- (x) ]^{-1} (  E_n^* )_+ (x)
&=
	[n^{2/3} \fs_-(z)]^{\s_3/4}
	\begin{pmatrix}
	i &  0 \\
	0 &  -i
\end{pmatrix}
	[ n^{2/3} \fs_+(z)]^{-\s_3/4}
 \hh
 	I
 	\label{Eb}
\end{align}
for $x\in(a,b)$.
Therefore, $E_n^*(z)$ is analytic in $\mathbb{C}\setminus(-\infty,a]\cup[1,+\infty)$.
On account of this and (\ref{R_asy_ps}), we have for $z\in\mbox{III}$
\begin{align}
	R(z)
	&\sim
	(-1)^N (c_1 e^{nl} )^{\s_3/2} E^*_n(z) E^*_n(z)^{-1}
	N(z) e^{ -n\ps(z)\s_3 }
	   \left[
   \frac{  2\cos(N\pi z)  }{ {c_1}^{1/2} \prod_{j=0}^{N-1}(z-x_{N,j} )   }
   \right]^{\s_3}
	\notag
	\\
	&\sim
 	\frac{(-1)^N}{2}
 	(c_1 e^{nl} )^{\s_3/2}
	 E^*_n(z)
	[ n^{2/3} \fs(z)]^{\s_3/4}
	\begin{pmatrix}
	e^{  -n\ps(z)  } &  ie^{  n\ps(z) } \\
	ie^{  -n\ps(z)  } & e^{  n\ps(z) }
\end{pmatrix}
   \bigg[
   \frac{  2\cos(N\pi z)  }{ (c_1 h(z))^{1/2} \prod_{j=0}^{N-1}(z-x_{N,j} )   }
   \bigg]^{\s_3}.
  \label{R_asy_fs1}
\end{align}
Hence, as before, it can be shown that the matrix
\begin{align}
	[n^{2/3} \fs(z)]^{\s_3/4}
	\begin{pmatrix}
	e^{  -n\ps(z)  } &  ie^{  n\ps(z) } \\
	ie^{  -n\ps(z)  } & e^{  n\ps(z) }
	\end{pmatrix}
\end{align}
is the leading term of the asymptotic expansion of the matrices
\begin{align}
	2\sqrt{\pi} \begin{pmatrix}
	i\om \Ai'( \om^2 \xi^* ) & -i\Ai'(  \xi^*)  \\
	-\om^2 \Ai(  \om^2 \xi^*) &  \Ai(  \xi^* )
\end{pmatrix}
	\label{ps:17}
\end{align}
for $z\in\mbox{III} \cap \mathbb{C}_+$, and
\begin{align}
	2\sqrt{\pi} \begin{pmatrix}
	-i\om^2 \Ai'( \om \xi^* ) & -i\Ai'(  \xi^*)  \\
	\om \Ai(  \om \xi^*) &  \Ai(  \xi^* )
\end{pmatrix}
 	\label{ps:18}
\end{align}
for $z\in\mbox{III} \cap \mathbb{C}_-$,
where $\xs=n^{2/3}\fs(z)=(-\frac32 n \ps(z))^{2/3} $.
Define the matrix function
\begin{equation}
  P(z):=\left\{\begin{array}{ll}
  \begin{pmatrix}
	-i\om^2 \Ai'( \om\wt \xi) &  i\Ai'( \wt \xi) \\
	-\om\Ai( \om \wt \xi) &  \Ai( \wt \xi)
\end{pmatrix},  & z\in\mbox{II}\cap \mathbb{C}_+,
\\\\
  \begin{pmatrix}
	i\om \Ai'( \om^2\wt \xi) &  i\Ai'( \wt \xi) \\
	\om^2\Ai( \om^2\wt \xi) &  \Ai( \wt \xi)
\end{pmatrix},  & z\in\mbox{II}\cap \mathbb{C}_-,
\\\\
  \begin{pmatrix}
	i\om \Ai'( \om^2 \xi^* ) & -i\Ai'(  \xi^*)  \\
	-\om^2 \Ai(  \om^2 \xi^*) &  \Ai(  \xi^* )
\end{pmatrix},  & z\in\mbox{III}\cap \mathbb{C}_+,
\\\\
  \begin{pmatrix}
	-i\om^2 \Ai'( \om \xi^* ) & -i\Ai'(  \xi^*)  \\
	\om \Ai(  \om \xi^*) &  \Ai(  \xi^* )
\end{pmatrix},  & z\in\mbox{III}\cap \mathbb{C}_-.
  \end{array}\right.
 \label{P}
\end{equation}
An appeal to the formulas
\begin{equation}
\begin{array}{l}
	\Ai(z)+\om \Ai(\om z)+\om^2 \Ai(\om^2 z)=0,
	\\
	\Ai'(z)+\om^2 \Ai'(\om z)+\om \Ai'(\om^2 z)=0
\end{array}
	\label{airy_con}
\end{equation}
shows  $P(z)$ satisfies
\begin{align}
	P_+(x)=P_-(x)
 \begin{pmatrix}
	1 &  0 \\
	1 &  1
\end{pmatrix},
\qquad x\in\mathbb{R}.
 \label{P_jum}
\end{align}

Let $\et_n(z)$ and $E_n^*(z)$ be defined as in \eqref{Etn} and \eqref{Esn}, respectively. In view of the  heuristic argument leading to \r{Rt:I}, \r{R_asy_ft1} and \r{R_asy_fs1}, it is reasonable to suggest that $R(z)$ is asymptotically approximated
by
\begin{equation}
 \wt R(z):=\left\{
 	\begin{array}{ll}
 	(c_1 e^{nl} )^{\s_3/2}  N(z)
	e^{-n\phi(z) \sigma_3}M_1(z), & z\in\mbox{I}, \\\\
	(-1)^n \sqrt{\pi}
	(c_1 e^{nl} )^{\s_3/2}  \et_n(z)P(z)M_2(z),  &  z\in\mbox{II},\\\\
	(-1)^N \sqrt{\pi}
	(c_1 e^{nl} )^{\s_3/2}  E^{*}_n(z)P(z)M_2(z),  &  z\in\mbox{III},
 	\end{array}
 \right.
 \label{Rt}
\end{equation}
where
	\begin{align}
	M_1(z):=
		\bigg[
   	\frac{  \wt D(z)  }{ {c_1}^{1/2} \prod_{j=0}^{N-1}(z-x_{N,j} )  }
   	\bigg]^{\s_3}
   	 \label{M1}
	\end{align}
for $z\in\mbox{I}$, and
	\begin{align}
		M_2(z):=
			\bigg[
   		\frac{  2\cos(N\pi z)  }{ (c_1h(z))^{1/2} \prod_{j=0}^{N-1}(z-x_{N,j} )  }
   			\bigg]^{\s_3}
   				\label{M2}
	\end{align}
for $z\in\mbox{II and III}$.
From \eqref{Rt}, it can be shown that
$\wt R(z)$ has the same large $z$ behavior as $R(z)$  given in
($R_c$). To see this, we only consider the case when $z\in\mbox{I}$ and $\Re z >x_0$.
The discussion for the case $z\in \mbox{I}$ and $\Re z <x_0$ is  very similar.
%The other case can be verified in a similar manner.
Note that when $z\in\mbox{I}$ and $\Re z>x_0$,
	\begin{align}
		\wt	R(z) = (c_1 e^{nl} )^{\s_3/2}  N(z)
		e^{-n\phi(z) \sigma_3}
   		\bigg[
   		\frac{  \wt D(z)  }{ {c_1}^{1/2} \prod_{j=0}^{N-1}(z-x_{N,j} )   }
   		\bigg]^{\s_3}.
	\end{align}
From \eqref{Dt} and \r{Ds_asy}, it is easily seen that $\wt D(z)=D^*(1-z) \sim 1$ for large $z$. Furthermore, from  \eqref{phi_def} and  $\prod_{j=0}^{N-1}(z-x_{N,j} ) \sim z^N$,  we have
\begin{align}
	\wt	R(z) &\sim (c_1 e^{nl} )^{\s_3/2}  N(z)
   	(c_1 e^{nl} )^{-\s_3/2}
	e^{n g(z) \sigma_3}z^{-N\s_3}.
 \label{Rt:inf:1}
\end{align}
This together with (\ref{g_def}) and the asymptotic behavior of $N(z)$ in ($N_c$) gives
\begin{align}
	\wt	R(z) &\sim (c_1 e^{nl} )^{\s_3/2}  N(z)
   	(c_1 e^{nl} )^{-\s_3/2}
	e^{n g(z) \sigma_3}z^{-N\s_3}
   \hh
   	(c_1 e^{nl} )^{\s_3/2} [I+O(1/z)]
   	(c_1 e^{nl} )^{-\s_3/2}
   	z^{(n-N)\s_3}
   \hh
   	[I+O(1/z)]
   	\begin{pmatrix}
	z^{n-N} &  0 \\
	0 &  z^{-n+N}
\end{pmatrix}
\qquad \qquad
\mbox{as } z\ra\infty.
 \label{Rt:inf}
\end{align}

Let $\Gamma:=\bigcup_{i=0}^{8}\Gamma_i\cup (0,1)$; see Figure \ref{Js_fig}. Since $\wt R(z)$ has the same large $z$-behavior as $R(z)$, it can be readily verified that the matrix-valued function
\begin{align}
	S(z):=(c_1 e^{nl} )^{-\s_3/2} R(z)\wt R(z)^{-1}
	(c_1 e^{nl} )^{\s_3/2}
 \label{S_def}
\end{align}
is a solution of the RHP:
\begin{enumerate}
\item[($S_{a}$)]
$S(z)$ is analytic for $z\in \mathbb{C}\setminus \Gamma$;
\item[($S_b$)] for $z\in \Gamma$,
\begin{equation*}
    S_+(z)
=
    S_-(z)
    J_S(z),
\end{equation*}
where
\begin{align}
J_S(z):=(c_1 e^{nl} )^{-\s_3/2} \wt R_-(z)
	J_R(z)
	\wt R_+(z)^{-1}
	(c_1 e^{nl} )^{\s_3/2}
	;
 \label{Js}
\end{align}
\item[($S_c$)] for $z\in\mathbb{C}\setminus \Gamma$,
\begin{align}
	S(z)\rightarrow I
	\qquad \mbox{as } z\rightarrow \infty,
\end{align}
\end{enumerate}
where the contour associated with the matrix $J_S(z)$ is $\Gamma=\Sigma\cup\Gamma_0\cup\partial K$; see Figure 1 and Figure 6.
		\begin{figure}[!tpb]
		\centering
		\includegraphics[scale=1.1]{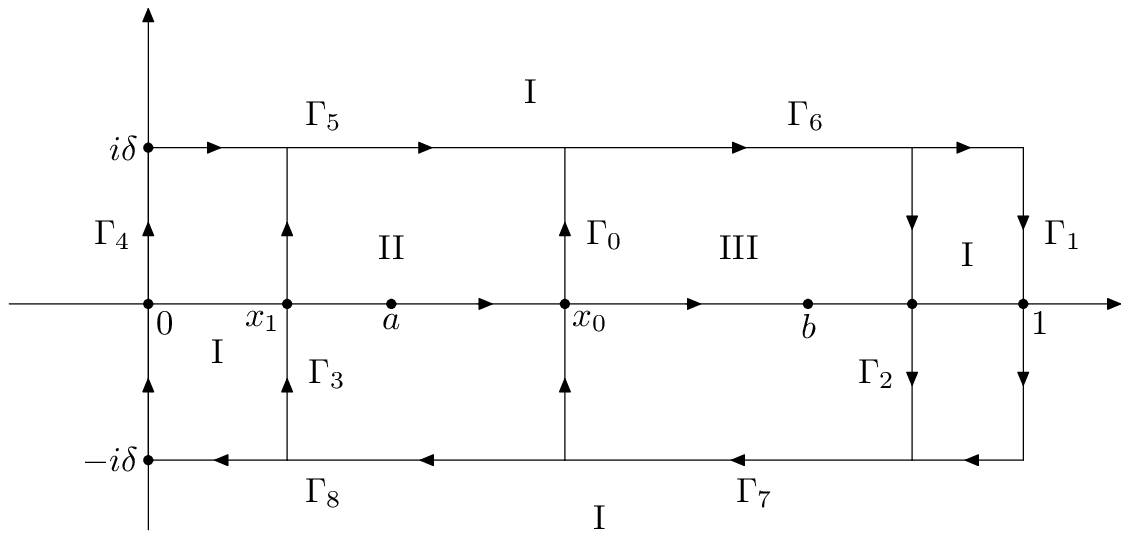}
		\caption{Contour $\Gamma$.  }
		\label{Js_fig}
		\end{figure}

To solve this problem, we first derive the asymptotic behavior of the jump matrix $J_S(z)$  as $n\rightarrow \infty$.
%It will be shown that the leading term of this expansion is the identity matrix.
For convenience, we  consider only the cases when $z$ lies on the contour $\Gamma_0\cap \cp$ and $ \Gamma_3\cap \cp$, and $z$ in the interval $(0,1)$,
%since the discussions for other cases are very similar.
since the other cases can be discussed
in a similar manner.

For $z\in\Gamma_0\cap \cp$, we have from \eqref{Rt}, \r{P} and \eqref{airy_con}
\begin{align}
	 \wt R_+(z)&=(-1)^n\sqrt \pi (c_1 e^{nl} )^{\s_3/2}
	 \wt E_n(z) \begin{pmatrix}
	-i\om^2 \Ai'( \om\wt \xi) &  i\Ai'( \wt \xi) \\
	-\om\Ai( \om \wt \xi) &  \Ai( \wt \xi)
\end{pmatrix}M_2(z)
	\hh
		(-1)^n\sqrt \pi (c_1 e^{nl} )^{\s_3/2} \wt E_n(z) \begin{pmatrix}
	-i\om^2 \Ai'( \om\wt \xi) &
	-i\om \Ai'( \om^2\wt \xi) \\
	-\om\Ai( \om \wt \xi) &
	-\om^2 \Ai( \om^2 \wt \xi)
\end{pmatrix}
	\begin{pmatrix}
	1 &  1 \\
	0 &  1
\end{pmatrix}
M_2(z).
\label{Js_1_fac1}
\end{align}
%Since by \eqref{pt_map_ab} and \eqref{ft} $\arg \ft(x_0)=-\pi$, $\arg \wt \xi=\arg \ft(z) \in [-\pi,-\pi/3)$ for $z\in\Gamma_0\cap \cp$. Moreover, one can see that the asymptotic formula of $\Ai( \wt \xi)$ in \eqref{Ai:1} does not hold uniformly for all $z\in \Gamma_0 \cap \cp$.
%By (\ref{airy_con}), we can rewrite \eqref{Rt:0:t1} as
%\begin{align}
%	 \wt R_+(z)&=
%	(-1)^n\sqrt \pi (c_1 e^{nl} )^{\s_3/2} \wt E_n(z) \begin{pmatrix}
%	-i\om^2 \Ai'( \om\wt \xi) &
%	-i\om \Ai'( \om^2\wt \xi) \\
%	-\om\Ai( \om \wt \xi) &
%	-\om^2 \Ai( \om^2 \wt \xi)
%\end{pmatrix}
%	\begin{pmatrix}
%	1 &  1 \\
%	0 &  1
%\end{pmatrix}
%M_2(z).
%\label{Js_1_fac1}
%\end{align}
Similarly,
\begin{align}
	 \wt R_-(z)
	 &=
	(-1)^N \sqrt \pi (c_1 e^{nl} )^{\s_3/2} E_n^*(z)  \begin{pmatrix}
	i\om \Ai'( \om^2 \xi^* ) & i\om^2 \Ai'( \om \xi^*)
	 \\
	-\om^2 \Ai(  \om^2 \xi^*) &  -\om\Ai( \om  \xi^*)
\end{pmatrix}
	\begin{pmatrix}
	1 &  1 \\
	0 &  1
\end{pmatrix}
M_2(z).
\label{Js_1_fac2}
\end{align}
Since $R(z)$ is analytic in $\mathbb{C}\setminus\Sigma$, $J_R(z)=1$ for $z\in\Gamma_0\cap\cp$;
see the RHP for $R(z)$ in Section 3. Thus,
from (\ref{Js}) it follows that
\begin{align}
	J_S(z)&=
		(-1)^{N-n}   E_n^*(z)
		\begin{pmatrix}
	i\om \Ai'( \om^2 \xi^* ) & i\om^2 \Ai'( \om \xi^*)
	 \\
	-\om^2 \Ai(  \om^2 \xi^*) &  -\om\Ai( \om  \xi^*)
	\end{pmatrix}
	 \begin{pmatrix}
	-i\om^2 \Ai'( \om\wt \xi) &
	-i\om \Ai'( \om^2\wt \xi) \\
	-\om\Ai( \om \wt \xi) &
	-\om^2 \Ai( \om^2 \wt \xi)
\end{pmatrix} ^{-1} \ent^{-1}.
 \label{JS:asyG}
\end{align}
On account of the well-known formula  \cite[p.194, (9.2.9)]{Handbook}
\bb
	\om \Ai(\om^2 \xt)\Ai'(\om\xt)-\om^2\Ai(\om\xt)\Ai'(\om^2 \xt)
	=\frac{1}{2\pi i},
 \label{Ai_rel}	
\ee
we obtain
\begin{align}
	J_S(z)&=
		(-1)^{N-n}  2\pi   E_n^*(z)
		\begin{pmatrix}
	i\om \Ai'( \om^2 \xi^* ) & i\om^2 \Ai'( \om \xi^*)
	 \\
	-\om^2 \Ai(  \om^2 \xi^*) &  -\om\Ai( \om  \xi^*)
	\end{pmatrix}
	 \begin{pmatrix}
	-\om^2 \Ai( \om^2\xt ) &
	i\om \Ai'( \om^2\wt \xi) \\
	\om\Ai( \om \wt \xi) &
	-i\om^2 \Ai'( \om \wt \xi)
	\end{pmatrix}
	\ent^{-1}.
 \label{JS:asyG:inv}
\end{align}

Now, we set out to derive the asymptotic behavior of $J_S(z)$ for $z\in\Gamma_0\cap \cp$.  Since by \eqref{pt_map_ab} and \eqref{ft_def} $\arg \ft(x_0)=-\pi$, $\arg \wt \xi=\arg \ft(z) \in [-\pi,-\pi/3)$ for $z\in\Gamma_0\cap \cp$. We recall the asymptotic behavior of the Airy function and its
derivative in \cite[p.198]{Handbook}, and have as in (6.16)
\begin{equation}
\begin{array}{ll}
 -\om^2 \Ai(\om^2 \xt)
 =
 \frac{ \xt^{-1/4} }{ 2\sqrt\pi }e^{ n\pt(z) }
 (1+O(1/n)),
&
 \qquad i\om \Ai'(\om^2 \xt)
 	=\frac{  i\xt^{1/4}	}{  2\sqrt\pi	} e^{ n\pt(z) }
  (1+O(1/n)),
 \\\\
 	\om \Ai(\om \xt)
 	= \frac{  i\xt^{-1/4}	}{  2\sqrt\pi	}e^{ -n\pt(z) }
 (1+O(1/n)),
 &\qquad
 	-i\om^2 \Ai'(\om \xt)
 	= \frac{  \xt^{1/4}	}{  2\sqrt\pi	} e^{ -n\pt(z) }
 	(1+O(1/n)).
 \end{array}
 \label{Js:1:xt:asy}
\end{equation}
Corresponding results can be given for $\Ai(\om \xs)$, $\Ai'(\om \xs)$, $\Ai(\om^2 \xs)$ and
$\Ai'(\om^2 \xs)$.
Substituting  \r{Esn}, \r{Etn} and the above asymptotic formulas into \eqref{JS:asyG:inv},  and taking into account (4.36), we readily see that
	\begin{align}
		J_S(z)&=
		(-1)^{N-n} \sqrt \pi
		N(z) h(z)^{\s_3/2}
	\begin{pmatrix}
	1 & - i \\
	-i &  1
	\end{pmatrix}
 	\begin{pmatrix}
	1+O(1/n) & i+O(1/n)  \\
	i+O(1/n) & 1+O(1/n)
	\end{pmatrix} e^{ -n\ps(z)\s_3 }
	\hhh
	\times
		\frac{1}{4\sqrt\pi}e^{ n\pt(z)\s_3 }
 	\begin{pmatrix}
		1+O(1/n) & i+O(1/n) \\
		i+O(1/n) & 1+O(1/n)
	\end{pmatrix}
	\begin{pmatrix}
	1 &  -i \\
	-i &  1
\end{pmatrix}
	h(z)^{-\s_3/2} N(z)^{-1}
	\hh
			(-1)^{N-n}
	    N(z) h(z)^{\s_3/2}
	 e^{ n(\pt-\ps)\s_3 }
	h(z)^{-\s_3/2} N(z)^{-1}[I+O(1/n)]
	 \hh
	 	I+O(1/n).
		\label{Js:1:asy}
	\end{align}

For $z\in\Gamma_3\cap \cp$, we again have $J_R(z)=I$ and  it follows from \r{Js}, (\ref{P}) and \eqref{Rt}  that
\begin{align}
	J_S(z)
	&=
	(-1)^n \sqrt \pi
	\wt E_n(z) \begin{pmatrix}
	-i\om^2 \Ai'( \om\wt \xi) &  i\Ai'( \wt \xi) \\
	-\om\Ai( \om \wt \xi) &  \Ai(\wt \xi)
\end{pmatrix}M_2(z)
		  M_1(z)^{-1} h(z)^{\s_3/2}
		  \hhh \times
		  e^{ n\phi(z)\s_3 }
		  h(z)^{-\s_3/2}N(z)^{-1}.
 \label{Js:2:inv}	
\end{align}
Moreover, we note from \eqref{M1}-\eqref{M2}, \eqref{Dt} and \eqref{Ds_D} that
	\begin{align}
	M_2(z)M_1(z)^{-1} h(z)^{\s_3/2}
		=e^{-N\pi i z\s_3} D(z)^{-\s_3}
		\label{Js:2:M12},
	\end{align}
and by \eqref{Ai:1} and \eqref{Etn} we obtain
\begin{align}
		\et_n(z)\begin{pmatrix}
	-i\om^2 \Ai'( \om\wt \xi) &  i\Ai'( \wt \xi) \\
	-\om\Ai( \om \wt \xi) &  \Ai( \wt \xi)
		\end{pmatrix}
		&=
			\frac{1}{ \sqrt{\pi} }N(z) h(z)^{\s_3/2}
		[I+O(1/n)]
	e^{-n\pt(z)\s_3}.
	 \label{Js:2:kk}
	\end{align}
A combination of \eqref{Js:2:inv}-\eqref{Js:2:kk}, \eqref{pt_phi} and
\eqref{D_asy} yields
	\begin{align}
		J_S(z)&=
		  	(-1)^n N(z) h(z)^{\s_3/2}
		[I+O(1/n)]
	e^{-n\pt(z)\s_3}e^{-N\pi iz\s_3} D(z)^{-\s_3}
		\hhh \times
		  e^{ n\phi(z)\s_3 }
		  h(z)^{-\s_3/2}N(z)^{-1}
		 \hh
		 	I+O(1/n).
		\label{Js:2:asy}
	\end{align}

Let us now consider $z$ in the interval $(0,1)$ on the real line. For simplicity, we only consider the case $z\in(0,x_0]$.  For $z=x\in(0,x_1]$, we note by \r{Dt} that $\wt D(z)=D^*(z)$, where $x_1$ is the point of intersection of the vertical line $\Gamma_3$ with the real axis; see Figure 7. It follows from \eqref{R_real}-\eqref{r}, \r{Rt} and \r{Js} that
	\begin{align}
		J_S(x)&=N(x)
		e^{-n\phi_-(x)\s_3 }  \left[
		 	\frac{ D^*_-(x) }
		 	{c_1^{1/2} \prod^{N-1}_{j=0}(x-x_{N,j}  ) }
		\right]^{\s_3}
			\begin{pmatrix}
			1  &  0  \\
		\dfrac{4\cos^2(N\pi x)}{ c_1 w(x)
		 \prod^{N-1}_{j=0}(x-x_{N,j}  )^2 }  &  1
		\end{pmatrix}
\bigskip
	  \hhh	
	   \qquad \qquad \qquad \qquad   \times
	   	\left[
		 	\frac{ D^*_+(x) }
		 	{c_1^{1/2} \prod^{N-1}_{j=0}(x-x_{N,j}  ) }
		\right]^{-\s_3}
		e^{n\phi_+(x)\s_3 } N(x)^{-1}
            \hh
           	N(x)
	  \begin{pmatrix}
			{ D^*_- }/{D^*_+ }e^{n(\phi_+-\phi_-)}
			&  0  \\
		\dfrac{4\cos^2(N\pi x)}{ w(x)D_+^*(x) D_-^*(x) }   e^{n(\phi_++\phi_-)}
		&  {D^*_+ }/{ D^*_- }  e^{n(\phi_--\phi_+) }
		\end{pmatrix}
		N(x)^{-1}.
		\label{Js:3:pre1}
	\end{align}
From \r{Ds_D}, one can  see  that $D_+^*/D_-^*=e^{ 2N\pi i x }$ and
$
	D_+^*(x)D_-^*(x)=4\cos^2(N\pi x) D(x)^2.
$
Since $\pt(z)$ is analytic for $z\in(0,a)$,
we obtain from  \eqref{pt_phi}
	\begin{align}
		J_S(z)=
		N(x)
		\begin{pmatrix}
	1  &  0  \\
	e^{2n\pt(x) } w(x)^{-1}D(x)^{-2}  &  1
	\end{pmatrix}
	 N(x)^{-1}.
	  \label{Js:3:pre2}
	\end{align}
In fact, it is easily verified that $D(x)$ is bounded and $w(x)^{-1}=O(N^{\a+\b}) $ in this case. Since $\pt(x)<0$ for $x\in(0,x_1]$;
see \r{pt_map_0} and Figure \ref{pt_fig}, $e^{2n\pt(x) } w(x)^{-1}D(x)^{-2}= O(N^{\a+\b} e^{2n\pt} ) \sim 0$ as $n\ra \infty$.
Thus,
	$
		J_S(z)= I+ O(1/n).
	 \label{Js:3}
	$

Finally, we consider the case $z=x\in(x_1,x_0]$. By \r{Rt}, \r{M2} and \r{r}, we have
from \r{Js}
	\begin{align}
		J_S(x)&=
	 	  \et_n(x) P_-(x)
		\left[
		 	\frac{ 2\cos(N\pi x) }
		 	{ (c_1 h(x) )^{1/2} \prod^{N-1}_{j=0}(x-x_{N,j}  ) }
		\right]^{\s_3}
			\begin{pmatrix}
			1  &  0  \\
		\dfrac{4\cos^2(N\pi x)}{ c_1 w(x)
		 \prod^{N-1}_{j=0}(x-x_{N,j}  )^2 }  &  1
		\end{pmatrix}
	  \hhh
	   \times
	   	 		\left[
		 	\frac{ 2\cos(N\pi x) }
		 	{ (c_1 h(x) )^{1/2} \prod^{N-1}_{j=0}(x-x_{N,j}  ) }
		\right]^{-\s_3}
		P_+(x)^{-1} \et_n(x)^{-1}
	  \hh
	  	\et_n(x) P_-(x)
		\begin{pmatrix}
			1  &  0  \\
		h(x)w(x)^{-1}
		&  1
		\end{pmatrix}
		P_+(x)^{-1} \et_n(x)^{-1} .
		\label{Js:4:pre}
	\end{align}
Since by \r{w_asy} $w(x)\sim h(x)$ for $x\in(x_1,x_0]$, it follows from \r{P_jum} that
\begin{align}
		J_S(x)&=
			\et_n(x) P_-(x)
			\begin{pmatrix}
			1  &  0  \\
		h(x)w(x)^{-1}  &  1
		\end{pmatrix}
		\begin{pmatrix}
	1  &  0  \\
	-1  &  1
	\end{pmatrix} P_-(x)^{-1} \et_n(x)^{-1}
	 \hh
	 	 \et_n(x) P_-(x)
			\begin{pmatrix}
			1  &  0  \\
		h(x)w(x)^{-1} -1 &  1
		\end{pmatrix}
		P_-(x)^{-1} \et_n(x)^{-1}
	 \hh
	 	I+(1/n)
		 .
	 \label{Js:4:asy}
	\end{align}

In a similar manner, we can prove that for $z$ in other parts of the contour $\Gamma$,  the jump matrix $J_S(z)$ also tends to the identity matrix. Hence, $J_S (z) = I+O(1/n)$ as $n \ra \infty$ on the whole contour
$\Gamma$.
By Theorem 3.8 in \cite{QiuWong},  the solution of the RHP for $S$ has the asymptotic behavior
\begin{align}
	S(z)=I+O(1/n)
	\label{S_asy}
\end{align}
as $n\rightarrow \infty$ uniformly for $z\in\mathbb{C}\setminus \Gamma$.
	
\section{Main Results}

Let $X_{ij}$ denote the $(i,j)$ element in the matrix $X$.
For convenience, we define $m(z):=N_{11}(z)$ and $m^*(z):=-iN_{12}(z)h^{-1}(z)$. By \eqref{N} and
\r{h}, a straightforward calculation gives
\begin{align}
	m(z)=
	\frac{ (z-a)^{1/2}+ (z-b)^{1/2} }{ 2(z-a)^{1/4} (z-b)^{1/4} }
	\(
	\frac{  z+c/2+{(z-a)^{1/2}(z-b)^{1/2} } }{  2z }
	\)^{\a}
	\(
	\frac{  1-z+c/2-(z-a)^{1/2}(z-b)^{1/2}  }{  2 (1-z) }
	\)^{\b}
	\label{m}
\end{align}
and
\begin{align}
	m^*(z):=
	\frac{ (z-b)^{1/2}-(z-a)^{1/2} }{ 2(z-a)^{1/4} (z-b)^{1/4} }
	\(
	\frac{  z+c/2-(z-a)^{1/2}(z-b)^{1/2}  }{  2z }
	\)^{\a}
	\(
	\frac{  1-z+c/2+(z-a)^{1/2}(z-b)^{1/2}   }{  2 (1-z) }
	\)^{\b},
 \label{ms}
\end{align}
where $a$ and $b$ are given in \r{MRS} and $c=n/N$. Note that $M(z)$ in \r{M} is analytic in
$\mathbb{C}\setminus[a,b]$ and hence $m(z)$ is also analytic in $\mathbb{C}\setminus[a,b]$.
However, this is not true with the function $m^*(z)$, which has additional cuts $(-\infty,0]$ and
$[1,+\infty)$.
We finally  arrive at the following theorem:
\begin{theorem}
Let {\upshape I, II, III} be the regions shown in Figure \ref{asy_fig}, and let $l$ denote the Lagrange constant given in (\ref{phi_def}).  With  $\phi(z)$ and $\wt D(z)$ defined in (\ref{phi_def}) and (\ref{Dt}), the asymptotic formula of the polynomial $\pi_{N,n}(z)$ is given by
\begin{align}
\pi_{N,n}(z)=
   e^{nl/2} \left\{ \wt D(z)e^{ -n \phi(z)  }m(z)
\left[   1+O(\frac1n) \right] +\delta(n)
 	\right\}
	\label{p1}
\end{align}
for $z\in\mbox{\upshape I}$, where $\delta(n)=0$ for $z \in \mbox{\upshape I}
\setminus \Omega_\pm$ and
$\delta(n)$ is exponentially small in comparison with its leading term for $z\in\mbox{\upshape I}\cap\Omega_\pm$; see Figure 1. More precisely,
$\delta(n)= O(N^{ \max(\a,\b) } e^{ n\phi(z) } )$,
where $\Re \phi(z)$ is negative when $\Re z$ is bounded away from the interval $(a,b)$;
see Figure \ref{phi_fig}.

Let $\widetilde{f}(z)$  be defined  as in (\ref{ft_def}). We have
\begin{align}
\pi_{N,n}(z)=(-1)^n \sqrt{\pi} e^{n l/2 }
\left\{ \widetilde{\text{A}}(z,n)\left[ 1+O(\frac1n)  \right]
+ \widetilde{\text{B}}(z,n)\left[ 1+O(\frac1n)  \right]  \right\}
	\label{p2}
\end{align}
for $z\in\mbox{\upshape II}$, where
\begin{align*}
\widetilde{\text{A}}(z,n)
&=  [ n^{2/3} \ft(z)  ]^{1/4}
  [m(z)+m^*(z)]
\left\{
\sin(N\pi z) \Ai(n^{2/3}\widetilde{f}(z)) +\cos(N\pi z)\Bi(n^{2/3}\widetilde{f}(z))
\right\},
\end{align*}
and
\begin{align*}
\widetilde{\text{B}}(z,n)
&=
[ n^{2/3} \ft(z)  ]^{-1/4}
  [m(z)-m^*(z)]
\left\{
\sin(N\pi z) \Ai'(n^{2/3}\widetilde{f}(z)) +\cos(N\pi z)\Bi'(n^{2/3}\widetilde{f}(z))
\right\}.
\end{align*}

Similarly, with $\fs(z)$ defined  in (\ref{fs_def}),
\begin{align}
\pi_{N,n}(z)=(-1)^N  \sqrt{\pi} e^{n l/2 }
\left\{ {\text{A}}^*(z,n)\left[ 1+O(\frac1n)  \right]
+ {\text{B}}^*(z,n)\left[ 1+O(\frac1n)  \right]  \right\}
	\label{p3}
\end{align}
for $z\in\mbox{\upshape III}$,
where
\begin{align*}
{\text{A}}^*(z,n)
&=
[ n^{2/3} \fs(z)  ]^{1/4}
  [m(z)-m^*(z)]
\left\{
\cos(N\pi z)\Bi(n^{2/3}{f}^*(z))-\sin(N\pi z) \Ai(n^{2/3}{f}^*(z))
\right\},
\end{align*}
and
\begin{align*}
{\text{B}}^*(z,n)
&=
[ n^{2/3} \fs(z)  ]^{-1/4}
  [m(z)+m^*(z)]
\left\{
\cos(N\pi z)\Bi'(n^{2/3}{f}^*(z))-\sin(N\pi z) \Ai'(n^{2/3}{f}^*(z))
\right\}.
\end{align*}

\end{theorem}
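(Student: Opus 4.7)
The plan is to reverse the chain of transformations $Y\to H\to R$ set up in Section 3 and combine it with the approximation $R\sim \wt R$ furnished by $S(z)=I+O(1/n)$ in \r{S_asy}. Since $\pi_{N,n}(z)=Y_{11}(z)$ and \r{H} gives $Y_{11}(z)=H_{11}(z)\prod_{j=0}^{N-1}(z-x_{N,j})$, the first task is to express $H_{11}$ via the entries of $R$. Outside $\Omega_\pm$ we have $H=R$ and so $H_{11}=R_{11}$; inside $\Omega_\pm$, inverting the triangular factor in \r{R_H_ort} yields
\[
 H_{11}(z)=R_{11}(z)-m_{21}(z)R_{12}(z),\qquad m_{21}(z)=\frac{\mp ie^{\pm N\pi iz}\cos(N\pi z)}{N\pi w(z)\prod_{j=0}^{N-1}(z-x_{N,j})^2}.
\]
Multiplying by $\prod_{j=0}^{N-1}(z-x_{N,j})$ and substituting $R\sim\wt R$ from \r{Rt} then reduces the proof to algebraic bookkeeping in each of the three regions.

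For Region I the matrix $\wt R(z)$ in \r{Rt:I} is diagonal once $\wt D(z)^{\sigma_3}$ and $\prod(z-x_{N,j})^{-\sigma_3}$ are absorbed, and a direct computation gives $\prod(z-x_{N,j})\,\wt R_{11}(z)=e^{nl/2}m(z)\wt D(z)e^{-n\phi(z)}$, which is the leading term of \r{p1}. For $z\in\Omega_\pm\cap\mbox{I}$ the extra contribution $-m_{21}(z)R_{12}(z)\prod(z-x_{N,j})$ is the quantity $\delta(n)$; using \r{w_asy} one estimates $\cos(N\pi z)/w(z)=O(N^{\max(\a,\b)})$, while the factor $e^{n\phi(z)}$ in $\wt R_{12}$ gives $\delta(n)=O(N^{\max(\a,\b)}e^{n\phi(z)})$, exponentially small by Proposition \ref{pro_phi_map}.

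For Region II I use the second case of \r{Rt}. The relations $m(z)=N_{11}(z)$ and $N_{12}(z)=ih(z)m^*(z)$ reduce the rows of $\wt E_n(z)$ in \r{Etn} to $(\wt E_n)_{11}=h^{1/2}(m-m^*)[n^{2/3}\ft]^{-1/4}$ and $(\wt E_n)_{12}=ih^{1/2}(m+m^*)[n^{2/3}\ft]^{1/4}$. Substituting the $\cp$-branch of $P$ from \r{P} and using the Airy identities
\[
 \om\Ai(\om z)=-\tfrac12\Ai(z)+\tfrac i2\Bi(z),\qquad \om^2\Ai'(\om z)=-\tfrac12\Ai'(z)+\tfrac i2\Bi'(z),
\]
which follow from the linear relation \r{airy_con} together with the standard connection formula for $\Bi$, the crucial cancellation is that the combination $2\cos(N\pi z)(\wt E_n P)_{11}-e^{N\pi iz}(\wt E_n P)_{12}$, contributed respectively by $R_{11}$ through the $M_2$ factor and by $-m_{21}R_{12}$, collapses to
\[
 h^{1/2}\bigl\{(m-m^*)[n^{2/3}\ft]^{-1/4}[\sin(N\pi z)\Ai'(\xt)+\cos(N\pi z)\Bi'(\xt)]+(m+m^*)[n^{2/3}\ft]^{1/4}[\sin(N\pi z)\Ai(\xt)+\cos(N\pi z)\Bi(\xt)]\bigr\},
\]
which is precisely $\wt A(z,n)+\wt B(z,n)$ in \r{p2}. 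The lower-half case follows by using the second branch of \r{P} together with the analogues of the above identities in which $\om$ is replaced by $\om^2$. For Region III the same recipe applies with $E_n^*$ in \r{Esn} replacing $\wt E_n$ and $\fs$ replacing $\ft$; the sign flip in the constant matrix of \r{Esn} swaps the roles of $m+m^*$ and $m-m^*$, the branches of \r{P} supply $\Ai$ at $\om^2\xs$ and $\om\xs$, and the Airy identities then yield the $\cos\Bi-\sin\Ai$ combinations of \r{p3}, with the overall sign $(-1)^N$ inherited from \r{R_asy_ps}.

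The main obstacle is the algebraic step of Region II (and III): one must carry the scalar prefactors $(c_1e^{nl})^{\pm 1/2}$, $(c_1 h(z))^{\pm 1/2}$ and $\prod(z-x_{N,j})^{\pm 1}$ appearing in $\wt R$, $M_2$ and $m_{21}$ through the manipulations, and arrange the Airy connection formulas so that the pieces from $R_{11}$ and from $-m_{21}R_{12}$ combine into $\sin(N\pi z)\Ai+\cos(N\pi z)\Bi$ rather than the individual Hankel-type terms $\Ai(\om\cdot)$. Once that cancellation is verified in $\cp$, the $\cm$ case follows by symmetry, and the uniform $O(1/n)$ relative error is controlled by $S(z)=I+O(1/n)$ together with the asymptotic expansions \r{Ai:1}.
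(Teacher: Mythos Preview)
Your proposal is correct and follows essentially the same route as the paper: reverse the chain $Y\to H\to R$, invoke $S(z)=I+O(1/n)$ to replace $R$ by $\wt R$, and then read off $\pi_{N,n}$ from the $(1,1)$ and $(1,2)$ entries, using the Airy connection formulas \r{Bi} to collapse the $\Ai(\om\,\cdot)$ terms into the $\sin(N\pi z)\Ai+\cos(N\pi z)\Bi$ combinations. The paper carries out exactly these steps, with the minor difference that it writes the error control more carefully as $R_{11}=S_{11}\wt R_{11}+c_1e^{nl}S_{12}\wt R_{21}$ (and similarly for $R_{12}$), so that the $S_{12}$ cross-term is explicitly seen to be $O(1/n)$ relative to the leading part---a point you absorb into the phrase ``$R\sim\wt R$''.
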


\begin{proof}

Since
$
    R(z)=(c_1 e^{nl})^{\sigma_3/2} S(z) (c_1 e^{nl })^{-\sigma_3/2}\widetilde{R}(z)
$ by \r{S_def},
we have
\begin{align}
    R_{11}(z)=S_{11}(z)\widetilde{R}_{11}(z)
    +S_{12}(z)\widetilde{R}_{21}(z)c_1 e^{nl}
	\label{R11}
\end{align}
and
\begin{align}
    R_{12}(z)=S_{11}(z)\widetilde{R}_{12}(z)
    +S_{12}(z)\widetilde{R}_{22}(z)c_1 e^{nl}
.
 \label{R12}
\end{align}
From \eqref{S_asy}, it follows that
	\begin{align}
		S_{11}(z)=1+O(1/n)
		\qquad \mbox{and}
		\qquad
		S_{12}(z)=O(1/n).
	 \label{S:asy}
	\end{align}

First, let us consider $z\in\mbox{I}$. Recalling the definition of
$\wt R(z)$ in \eqref{Rt}, one obtains
	\begin{align}
		R_{11}(z)=
		e^{nl/2} e^{-n\phi(z) }
		{\wt D(z) }{  \prod_{j=0}^{N-1}(z-x_{N,j} )^{-1} }
		\left[
		S_{11}(z) N_{11}(z)+ S_{12}(z)N_{21}(z)
	 \right]
		\label{R11:p1}
	\end{align}
and
	\begin{align}
		R_{12}(z)
		= c_1 e^{nl/2} e^{n\phi(z)}
		{\wt D(z)^{-1} }{ \prod_{j=0}^{N-1}(z-x_{N,j} ) }
	\left[
		S_{11}(z) N_{12}(z)+S_{12}(z)N_{22}(z)
	\right]	.
		\label{R12:p1}
	\end{align}
Note by Theorem 2.1 and (3.1) that
	\begin{align}
		\pi_{N,n}(z)=Y_{11}(z)
		= H_{11}(z) \prod_{j=0}^{N-1}(z-x_{N,j} ).
				\label{pi:H}
	\end{align}
From (3.3)-(3.4), we know that $H_{11}(z)$ has different expressions in
different parts of the region I.
From \r{R_H}, \r{R11:p1}  and \r{S:asy}, it follows that
	\begin{align}
		\pi_{N,n}(z)
		&=R_{11}(z)\prod_{j=0}^{N-1}(z-x_{N,j} )
	  =
	 	e^{nl/2} {\wt D(z) }
	 	e^{-n\phi(z) }m(z)
		\left[
		1+O(1/n)
	 \right]
		\label{p1:nom}
	\end{align}
for $z\in\mbox{I}\setminus\Omega_\pm$,
since $m(z):=N_{11}(z)$.

Recalling the notation $c_1=2N\pi i$ in \r{r}, we have from (3.3) and \r{S:asy}-\r{R12:p1}
	\begin{align}
		\pi_{N,n}(z)
		&=R_{11}(z)\prod_{j=0}^{N-1}(z-x_{N,j} )
		-R_{12}(z) \frac{  \mp i e^{\pm N\pi i z } \cos(N\pi z)	}
		{  N\pi w(z)  \prod_{j=0}^{N-1}(z-x_{N,j} )	}
	 \hh
	 	e^{nl/2}
	 	\left\{
	 		e^{-n\phi} \wt D(z)N_{11}(z)
	 		[1+O(1/n)]
	 		\mp
	 		e^{n\phi(z)}
		\frac{  2  e^{\pm N\pi i z } \cos(N\pi z)	}
		{   \wt D(z)w(z) 	}N_{12}(z)
	\left[
		1+O(1/n)
	\right]	
	 	\right\}
		\label{p1:ome:pre}
	\end{align}
for $z\in\mbox{I}\cap\Omega_\pm$. For simplicity, we only consider $\Re z <x_0$.
It follows from \r{Dt} and \r{Ds_D} that
\begin{align}
		\pi_{N,n}(z)
		&=
	 	e^{nl/2}
	 	\left\{
	 		e^{-n\phi} \wt D(z)N_{11}(z) [1+O(1/n)]
	 		\mp
	 		e^{n\phi(z)}
		{    D(z)^{-1}w(z)^{-1} 	}N_{12}(z)
	\left[
		1+O(1/n)
	\right]	
	 	\right\}
	.
		\label{p1:ome}
	\end{align}
Note that $\Re \phi(z)<0$ for $z\in\mbox{I}$; see Figure \ref{phi_fig}.
Moreover, it is readily seen from \r{D_asy} that $D(z)$ is bounded
as $n\ra \infty$ for $z\in\mbox{I}\cap \Omega_\pm$. From \r{w_asy}. we
also have $w(z)$ bounded as $z\ra\infty$ for $z\in \mbox{I}\cap\Omega_\pm$ and $z\neq 0$.
Near $z=0$, $w(z)^{-1}=O(N^{\a})$. Thus, in terms of the notation $m(z)=N_{11}(z)$,
\begin{align}
		\pi_{N,n}(z)
		&=
	 	e^{nl/2}
	 	\left\{
	 		e^{-n\phi} \wt D(z)m(z) [1+O(1/n)]
	 		+
	 		O( N^{\a} e^{n \Re \phi(z) } )
	 	\right\}
	\end{align}
for $z\in\mbox{I}\cap\Omega_\pm$ and $\Re z<x_0$. Similarly, we can show
\begin{align}
		\pi_{N,n}(z)
		&=
	 	e^{nl/2}
	 	\left\{
	 		e^{-n\phi} \wt D(z)m(z) [1+O(1/n)]
	 		+
	 		O( N^{\b} e^{n \Re \phi(z) } )
	 	\right\}
	\end{align}
for $z\in\mbox{I}\cap\Omega_\pm$ and $\Re z>x_0$. Coupling the above two formulas with \r{p1:nom} gives \r{p1}.

Next, we consider the case $z\in \mbox{II}$; see Figure 6.
Let us first restrict $z\in \mbox{II}\cap \cp$.
From (7.6) and (7.7), we obtain by
a combination of (\ref{Etn}), (\ref{P}) and (\ref{Rt})
\begin{align}
    R_{11}(z)
    &=
    S_{11}(z)\rt_{11}(z)+S_{12}(z)\rt_{21}(z) c_1e^{nl}
    \hh
    (-1)^n\sqrt{\pi} e^{nl/2}
    \frac{ 2\cos(N\pi z)  }{ \prod_{j=0}^{N-1}(
          z-x_{N,j} ) }
    \bigg\{
	[-i\xt^{-\frac14}  \om^2 \Ai'(\om\xt)
	 -i \xt^{\frac14} \om \Ai(\om \xt)  ] N_{11}(z) [1+ O(1/n) ]
    \hhh
	\quad \qquad \qquad \qquad \qquad \qquad +
	[\xt^{-\frac14}  \om^2 \Ai'(\om\xt)
	-\xt^{\frac14} \om \Ai(\om \xt)
	] N_{12}(z) h(z)^{-1} [1+ O(1/n) ]
 	\bigg\}
 		\label{R11:p2}
\end{align}
and
\begin{align}
    R_{12}(z)&=
    S_{11}(z)\rt_{12}(z)+S_{12}(z)\rt_{22}(z) c_1e^{nl}
    \hh
   (-1)^n\sqrt{\pi}
    c_1 e^{nl/2}  \frac{ \prod_{j=0}^{N-1}(z-x_{N,j} ) }{ 2\cos(N\pi z)  }
    \bigg\{
    [i \xt^{-\frac14} \Ai'(\xt) +i \xt^{\frac14}\Ai(\xt)]
     N_{11}(z) h(z)[ 1 +O(1/n) ]
    \hhh
   \qquad \qquad \qquad \qquad \qquad \qquad \qquad \qquad  +
    	[\xt^{\frac14}\Ai(\xt)- \xt^{-\frac14} \Ai'(\xt)]
    	N_{12}(z) [1+O(1/n)  ]
    \bigg\}
		\label{R12:p2}
.
\end{align}
Here, use has also been made  of (7.8). Since $c_1=2 N\pi i$,
by \r{pi:H} and \r{R_H_ort} we have
\begin{align*}
		\pi_{N,n}(z)
		&=R_{11}(z)\prod_{j=0}^{N-1}(z-x_{N,j} )
		-R_{12}(z) \frac{  - i e^{\pm N\pi i z } \cos(N\pi z)	}
		{  N\pi w(z)  \prod_{j=0}^{N-1}(z-x_{N,j} )	};
\end{align*}
see the first equality in (7.13). A combination of this with (7.17) and (7.18) gives
\begin{align}
	   	   \pi_{N,n}(z)
	   &=
	   	(-1)^n\sqrt{\pi} e^{nl/2}
	 	\Bigg\{
	 	{ 2\cos(N\pi z)  }
    \bigg[
	[-i\xt^{-\frac14}  \om^2 \Ai'(\om\xt)
	 -i \xt^{\frac14} \om \Ai(\om \xt)  ] N_{11}(z) [1+ O(1/n) ]
    \hhh
	\qquad \qquad \ \qquad \qquad \qquad \qquad \ +
	[\xt^{-\frac14}  \om^2 \Ai'(\om\xt)
	-\xt^{\frac14} \om \Ai(\om \xt)
	] N_{12}(z) h(z)^{-1} [1+ O(1/n) ]
 	\bigg]
  \hhh	
 	\qquad \qquad \qquad \quad \ -
 	e^{N\pi i z} w(z)^{-1}
 	\bigg[
    [i \xt^{-\frac14} \Ai'(\xt) +i \xt^{\frac14}\Ai(\xt)]
     N_{11}(z) h(z)[ 1 +O(1/n) ]
    \hhh
   \qquad \qquad \qquad \qquad \qquad  \qquad \quad \quad \quad  +
    	[\xt^{\frac14}\Ai(\xt)- \xt^{-\frac14} \Ai'(\xt)]
    	N_{12}(z) [1+O(1/n)  ]
    \bigg]
 	\Bigg\}
	\label{p2:pre}
	\end{align}	
for $z\in\mbox{II}\cap\cp$. Recall the well-known formula of the Airy functions \cite[(9.2.11)]{Handbook}
\begin{eqnarray}
    \text{Bi}(z)
=
    \pm i \left[ 2 e^{\mp \pi i/3} \text{Ai}(\omega^{\pm 1} z)-\text{Ai}(z) \right]
.
\label{Bi}
\end{eqnarray}
Also, note that $w(z)= h(z)[1+O(1/n)]$ as $n\ra \infty$; see \r{w_asy} and \r{h}.
This together with \r{Bi} gives
	\begin{align}
		\pi_{N,n}(z)=(-1)^n \sqrt{\pi} e^{n l/2 }
\left\{ \widetilde{\text{A}}(z,n)\left[ 1+O(\frac1n)  \right]
+ \widetilde{\text{B}}(z,n)\left[ 1+O(\frac1n)  \right]  \right\}
		\label{p2:p}
	\end{align}
for $z\in\mbox{II}\cap\cp$.

In a similar manner, one can see that
\begin{align}
		\pi_{N,n}(z)
		&=
	   	(-1)^n\sqrt{\pi} e^{nl/2}
	 	\Bigg\{
	 	{ 2\cos(N\pi z)  }
    \bigg[
	[i\xt^{-\frac14}  \om \Ai'(\om^2\xt)
	 +i \xt^{\frac14} \om^2 \Ai(\om^2 \xt)  ]\cdot N_{11}(z) [1+ O(1/n) ]
    \hhh
	\qquad \qquad \ \qquad \qquad \qquad \qquad \ +
	[-\xt^{-\frac14}  \om \Ai'(\om^2 \xt)
	+\xt^{\frac14} \om^2 \Ai(\om^2 \xt)
	]\cdot N_{12}(z) h(z)^{-1} [1+ O(1/n) ]
 	\bigg]
  \hhh	
 	\qquad \qquad \qquad \quad \ +
 	e^{-N\pi i z} w(z)^{-1}
 	\bigg[
    [i \xt^{-\frac14} \Ai'(\xt) +i \xt^{\frac14}\Ai(\xt)]
     N_{11}(z) h(z)[ 1 +O(1/n) ]
    \hhh
   \qquad \qquad \qquad \qquad \qquad  \qquad \quad \quad \quad  +
    	[\xt^{\frac14}\Ai(\xt)- \xt^{-\frac14} \Ai'(\xt)]
    	N_{12}(z) [1+O(1/n)  ]
    \bigg]
 	\Bigg\}
	\label{p2:pre1}
	\end{align}
for $z\in\mbox{II}\cap\cm$. Again by \r{Bi}, we obtain the exactly  same formula given in \r{p2:p}, thus proving \r{p2}.
Following the same argument as given above, one can establish \r{p3} for $z\in \mbox{III}$. This completes the proof of Theorem 7.1.

\end{proof}

\section{Special Cases}

In this section, we want to investigate the asymptotic behavior of the Hahn polynomials
$Q_n(x;\a,\b,N-1)$
for fixed values of $x$.
First, we introduce the notation
	\begin{align}
		x:=Nz-1/2.
		\label{x}
	\end{align}
Then, it follows from (1.6) and (2.14) that
	\begin{align}
		Q_n(x;\a,\b,N-1)
		&=
		Q_n(Nz-1/2;\a,\b,N-1)
		\hh
		\frac{ N^n (n+\a+\b+1)_n }{ (\a+1)_n (-N+1)_n }
		\pi_{N,n}(z).
		\label{Q:exp}
	\end{align}

Now, we derive from \r{p1} asymptotic formulas for $Q_n(x;\a,\b,N-1)$ when $x$ is a fixed number (i.e., $z=O(1/N)$). For $x>-1/2$, i.e., $z>0$, we obtain from
\r{p1}, \r{Dt}, \r{Ds_D} and \r{pt_phi}
	\begin{align}
		\pi_{N,n}(z) \sim
		(-1)^n 2e^{ n(l/2- \pt(z))  }D(z) \cos(N\pi z) m(z).
		\label{pi:x:l0}
	\end{align}
\newpage
\noindent
Here, we have made use of the fact that $e^{-n\phi_\pm (z)}D^*_\pm(z)=(-1)^n 2\cos(N\pi z )
e^{-n\pt(z)} D(z) $ .
%Thus,
%\begin{align}
%	\pi_{N,n}^\pm(z)\sim
%	e^{nl/2} e^{ -n \phi_+(z)  } D^*_\pm(z) m(z)
%	=
%  (-1)^n 2e^{ n(l/2- \pt(z))  }D(z) \cos(N\pi z) m(z),
% \label{pi:111}
%\end{align}
%where $\pi_{N,n}^\pm(z)$ denotes the limiting value of $\pi_{N,n}(z)$ as $z$ approaches the real line from above or below.

Substituting \r{x} into \r{D} gives
\begin{align}
	D(z)\cos(N\pi z)=\frac{ e^{Nz} \Gamma(Nz+1/2) }{ \sqrt{2\pi}(Nz)^{Nz} }\cos(N\pi z)=-\frac{ e^{x+1/2} \Gamma(x+1) }{ \sqrt{2\pi}(x+1/2)^{x+1/2} }
\sin(\pi x)	
.
\label{Dx}
\end{align}
Moreover, it is readily verified that
\begin{align}
	\frac{ N^n (n+\a+\b+1)_n }{ (\a+1)_n (-N+1)_n }
	%= \frac{(-1)^n N^n }{ (N-n)_n }
	 %\frac{ \Gamma(\a+1) }{ \Gamma(n+\a+1) }
	%\frac{\Gamma(2n+\a+\b+1)  }{\Gamma(n+\a+\b+1) }
	\sim
	(-N)^n\frac{ \Gamma(\a+1) \Gamma(N-n)  }{\sqrt \pi n^{\a+1/2 }
	\Gamma(N) } 2^{2n+\a+\b}
 \label{cn}
\end{align}
as $n\ra \infty$, and by \r{m}
\begin{align}
	m(z)
\sim
	\frac{(1+c)^{1/2}}{ 2\cdot 2^{-1/2} c^{1/2} }
	\(\frac{c+1}{2c}\)^{\a}  \(\frac{c+1}{2} \)^{\b}
=   	 \frac{  (1+c)^{\a+\b+1/2} }{  2^{\a+\b+1/2} c^{\a+1/2}  }
\label{mx}
\end{align}
as $z \rightarrow 0$. We now derive an explicit formula for $l/2-\pt(z)$.
From  (\ref{pt_phi}), \r{phi_map_0} and \r{phi_def},  one has
\begin{align}
	l/2-\pt(z)
	=l/2-\phi_+(z)-\pi i(1-\frac1c z)
	=
	\text{Re}\,g_+(z)
\label{pt:gp}
\end{align}
for $z\in(0,a)$.
Note that since $x$ is fixed, by \r{x} $z\ra0$ as $n\ra \infty$. On account of \r{pt:gp} and  (4.58), by letting $n\ra \infty$,
 we obtain
\begin{align}
	l/2-\pt(z)=
	\left[
		-1-2\log 2 +(1+\frac1c)\log(1+c)
		\right]
		+z
		\left[
		\frac1c \log(z) -\frac2c \log c -\frac1c
		\right]+O(z^2),
	\label{pt:x:asy}
\end{align}
and so
\begin{align}
	e^{n(l/2-\pt(z))}
	\sim
	e^{-n}2^{-2n} (1+c)^{n+N}(x+1/2)^{x+1/2}e^{-x-1/2}N^{x+1/2}n^{-2x-1}
.
\label{pt:x:asye}
\end{align}
(The last two equations have also been used in [10, (6.27) \& (6.28)].)
A combination of  \r{pi:x:l0}, \r{Dx}, \r{mx} and \r{pt:x:asye} gives
\begin{align}
	Q_n(x;\a,\b,N-1) \sim
	-\frac{ \Gamma(\a+1) \Gamma(N-n)  }{\pi \Gamma(N) e^n n^{2x+2\a+2}}
	(1+c)^{n+N+\a+\b+1/2}
	N^{x+n+\a+1}\Gamma(x+1) \sin(\pi x)
	\label{Q:1}
\end{align}
for $x>-1/2$.

Next, we consider the case when $x<-1/2$, i.e., $z<0$.
From \r{g_def} and \r{phi_def}, it can be shown that
$e^{n\phi(z)}$ is analytic in the interval $(-\infty,0)$. Hence,  we obtain from
\r{p1} and \r{Dt}
\begin{align}
	\pi_{N,n}(z)\sim
    D^*(z)e^{ n (l/2- \phi(z))  }m(z).
    \label{pi:x:g0}
\end{align}
Note that
$e^{n(l/2-\phi(z) )}=e^{n(l/2-\phi_+(z)  )}= e^{ng_+(z)}$.
On account of  \r{g}, we have
\begin{align}
	g_+(z)=\left[\pi i
		-1-2\log 2 +(1+\frac1c)\log(1+c)
		\right]
		+z
		\left[
		\frac1c \log(-z) -\frac2c \log c -\frac1c
		\right]+O(z^2)
\label{Pp}
\end{align}
as $z\rightarrow 0$.
Letting $n\rightarrow \infty$, we obtain
\begin{align}
	e^{n(l/2-\phi(z))}
	\sim
	(-1)^n 2^{-2n}(1+c)^{n+N}e^{-n}
	 c^{-2x-1}N^{-x-1/2} (-x-1/2)^{x+1/2} e^{-x-1/2}
;
\label{pt:x1:asye}
\end{align}
equations \r{Pp} and \r{pt:x1:asye} are also given in [10, (6.33) \& (6.34)].
Inserting \r{x} in (\ref{Ds}) yields
\begin{align}
	D^*(z)= \frac{  \sqrt{2\pi}e^{x+1/2}  }{ (-x-1/2)^{x+1/2}\Gamma(-x)   }
.
\label{Dsx}
\end{align}
Hence, it follows from \r{Q:exp}, \r{pi:x:g0}, (\ref{mx}) and \r{pt:x1:asye}-(\ref{Dsx}) that
\begin{align}
	Q_{n}(x;\a,\b,N-1)\sim
	\frac{ \Gamma(\a+1) \Gamma(N-n)  }
	{ \Gamma(N)\Gamma(-x)  e^n n^{2x+2\a+2} }
	(1+c)^{n+N+\a+\b+1/2}
	N^{x+n+\a+1}
 \label{Q:2}
\end{align}
for $x<-1/2$.

To conclude this chapter, we make a comparison of our results with those presented in \cite{LinWong} for the discrete Chebyshev polynomials $t_n(z,N)$. First,
note that by taking $\a=\b=0$, the weight function $\rho(x;\a,\b,N)$ in \r{rho} becomes 1.
Furthermore, we have
%It is known that the discrete Chebyshev polynomials $t_n(x,N)$ can be defined as a special case of Hahn polynomials.
%It is known that the discrete Chebyshev polynomials $t_n(x,N)$ can be defined as a special case of Hahn polynomials.
\begin{equation}
	t_n(z,N)=(-1)^n(N-n)_n Q_{n}(z;0,0,N-1);
	\label{t:Q}
\end{equation}
see \cite[p.174 \& p.176]{BealsWong} and \cite{PanWong}.
For convenience, we consider only the case when $z$ is real and $\Re z<x_0$. By \r{m} and \r{ms}, we have
\begin{equation}
	m(z)= \frac{ (z-a)^{1/2}+(z-b)^{1/2}  }{ 2(z-a)^{\frac14} (z-b)^{\frac14} }
	\mbox{\quad  and \quad}
	m^*(z)= \frac{ (z-b)^{1/2}-(z-a)^{1/2} }{ 2(z-a)^{\frac14} (z-b)^{\frac14} }.
	\label{m:1}
\end{equation}
If $\wt \pi_{N,n}(z)$ denotes the monic polynomials associated with the discrete Chebyshev polynomials, then we have the relationship
$$
	\wt \pi_{N,n}(Nz-\frac12)= \frac{ n!^2 }{  (2n)! }
	t_n(Nz-\frac12,N) =N^n \pi_{N,n}(z),
$$
in view of \r{Q:exp}, \r{t:Q} and \cite[(2.1)]{LinWong}.
Since the function  $g(z)$ in \r{g_def} does not depend on $\a$ and $\b$, it  is the same  $g$-function defined in \cite[(3.4)]{LinWong}; also see \r{g} and \cite[(6.25)]{LinWong}.
For $z\in \mbox{II}$ (that is, $x_1< z<x_0$), in the notations of this paper we can rewrite (5.1) in \cite{LinWong} as
\begin{align}
    \pi_{N,n}(z)
&\sim
    (-1)^n\sqrt{\pi} e^{nl/2}
  \hhh
  \times
     \Bigg\{
        \left[
        \sin(N\pi z) \text{\upshape Ai}(n^{2/3}\widetilde{f}(z))+\cos(N\pi z)\text{\upshape Bi}(n^{2/3}\widetilde{f}(z))
        \right]
        \frac{(z-b)^{1/4} }{ (z-a)^{1/4}   }[n^{2/3}\widetilde{f}(z)]^{1/4}
\nonumber
\\
&\qquad \quad+
        \left[
        \sin(N\pi z) \text{\upshape Ai}'(n^{2/3}\widetilde{f}(z))+\cos(N\pi z)\text{\upshape Bi}'(n^{2/3}\widetilde{f}(z))
        \right]
        \frac{ (z-a)^{1/4} }{ (z-b)^{1/4}  }[n^{2/3}\widetilde{f}(z)]^{-1/4}
    \Bigg\},
\label{pi_left}
\end{align}
on account of  (3.37) in \cite{LinWong}.
In view of \r{m:1}, it can be readily  seen  that \r{pi_left} agrees with \r{p2}.
Moreover,
we obtain from  \cite[(6.2)]{LinWong}, \cite[(3.34)]{LinWong}, \r{Ds} and \r{m:1}
\begin{align}
	\pi_{N,n}(z)
&\sim
	 e^{nl/2}
 	 e^{ -n\phi(z) } \frac{ \sqrt{2\pi} e^{Nz}  (-Nz)^{-Nz} }
 	 { \Gamma(-Nz+1/2) }
	\frac{  (z-a)^{1/2}+(z-b)^{1/2}  }{ 2(z-a)^{1/4}(z-b)^{1/4}   }
	= D^*(z)e^{ n (l/2- \phi(z))  }m(z)
	\label{t18}
\end{align}
for $z<0$. Similarly, by \cite[(6.1)]{LinWong}, \cite[(3.33)]{LinWong}, \r{D} and \r{m:1} we have
\begin{equation}
	\pi_{N,n}(z) \sim
		(-1)^n 2e^{ n(l/2- \pt(z))  }D(z) \cos(N\pi z) m(z)
	\label{t19}
\end{equation}
for $0<z<x_1$. A comparison of \r{t18} and \r{t19} with \r{pi:x:g0} and \r{pi:x:l0} implies that our results  agree with those in \cite{LinWong}. In fact, one can easily show that when $\a=\b=0$, we do not need to construct the parametrix in region I. However, $\wt R(z)$ defined in (6.31) for $z\in\mbox{II}$ can not be extended to the region I when $\a,\b \neq 0$. This is because the function $h(z)$ involved in $\wt R(z)$ is not analytic in the neighborhoods of $0$ and $1$.

%%%%%%%%%%%%%%%%%%%%%%%%%%%%%%%%%%%%%%%%%%%%%%%%%%%%%%%%%%%%%%%%%%%%%%%%%%%

\end{document}